\colorlet{shadecolor}{LavenderBlush3}
\newcommand{\R}{\mathbb R}
\newcommand{\IR}{{\mathbb R}}
\newcommand{\IC}{{\mathbb C}}
\newcommand{\IZ}{{\mathbb Z}}
\newcommand{\Z}{\mathbb{Z}}
\newcommand{\IN}{{\mathbb N}}
\newcommand{\IQ}{{\mathbb Q}}
\newcommand{\IH}{{\mathbb H}}
\newcommand{\C}{{\mathbb C}}
\newcommand{\Q}{{\mathbb Q}}
\newcommand{\CE}{\mathcal{E}}%
\newcommand{\CF}{\mathcal{F}}
\newcommand{\calT}{\mathscr{T}}
\newcommand{\sgn}{\mbox{sgn}}
\theoremstyle{plain}
\newtheorem{thm}{Theorem}[section]
\newtheorem{cor}[thm]{Corollary}
\newtheorem{lem}[thm]{Lemma}
\newtheorem{prop}[thm]{Proposition}
\newtheorem*{remark}{Remark}
\theoremstyle{definition}
\numberwithin{equation}{section}
\newcommand{\pmat}[1]{\left( \smallmatrix #1 \endsmallmatrix \right)}
\newcommand{\mat}[1]{\left( \begin{matrix} #1 \end{matrix} \right)}
\renewcommand{\sgn}{\textnormal{sgn}}
\def\lp{\left(}
\def\rp{\right)}
\def\a{\alpha}
\def\d{\delta}
\def\k{\kappa}
\def\l{\lambda}
\def\z{\zeta}
\def\th{\theta}
\def\e{\varepsilon}
\def\n{\nu}
\def\t{\tau}
\newcommand{\re}{{\rm Re}}
\renewcommand{\sgn}{{\rm sgn}}
\def\wt{\widetilde}
\def\wh{\widehat}
\newcommand{\andd}{\quad \mbox{ and } \quad}
\setlist[itemize]{noitemsep, topsep=0pt}
\newcommand{\vast}{\bBigg@{4}}
\newcommand{\Vast}{\bBigg@{5}}
\newcommand{\nolisttopbreak}{\par\nobreak\@afterheading}
\renewcommand{\pmod}[1]{\ ( \mathrm{mod} \, #1 )}
\newcommand{\psum}{\sideset{}{^*}\sum}
\newcommand{\Log}{\mathrm{Log}}
\renewcommand{\calT}{\mathcal{T}}
\title{Asymptotics of partition parts in arithmetic progressions}
\author{Kathrin Bringmann}
\author{Caner Nazaroglu}
\address{University of Cologne, Department of Mathematics and Computer Science, Weyertal 86-90, 50931 Cologne, Germany}
\email{kbringma@uni-koeln.de}
\email{cnazarog@uni-koeln.de}
\author{Jan-Willem M. van Ittersum}
\address{Department of Mathematics and Computer Science, University of Cologne,
	Weyertal 86-90, 50931 Cologne, Germany}
\curraddr{Korteweg--de Vries Institute for Mathematics, University of Amsterdam, Postbus 94248, 1090 GE  Amsterdam, The Netherlands}
\email{j.w.m.vanittersum@uva.nl}
\subjclass[2020]{11E45, 11F30}
\keywords{Asymptotics, Circle Method, Eisenstein series, partitions, traces.}
\begin{document}
\maketitle

\begin{abstract}
We study the distribution of partition parts in arithmetic progressions and find asymptotic results that capture all exponentially growing terms. This is accomplished by studying the behavior of non-modular Eisenstein series that appear in their generating function and have expressions in terms of indefinite and false-indefinite theta functions.
\end{abstract}

\section{Introduction and Statement of results}\label{sec:introduction}
Distribution results in arithmetic progressions concern how arithmetic objects--such as numbers, Fourier coefficients of modular forms, or counts of combinatorial quantities--are distributed among residue classes$\pmod{R}$. A central theme is understanding whether these objects are equidistributed among the coprime classes$\pmod{R}$ and, if not, quantifying the bias. For example, Dirichlet’s Theorem guarantees an infinitude of primes in each residue class, while the Siegel--Walfisz Theorem and the Bombieri--Vinogradov Theorem provide asymptotic formulas and error terms. This paper investigates the asymptotic behavior of the number of partition parts in arithmetic progressions, as well as weighted hook lengths in arithmetic progressions.

Partition parts and multiplicities are related to Eisenstein series in various ways. For example, in \cite{vIsym} it was shown that the generating function associated to a symmetric polynomial in partition parts is a polynomial in Eisenstein series. Another example is the \emph{MacMahon $q$-series}~\cite{Mac1920}
\[
\sum_{n_1>\cdots>n_r>0} \frac{q^{n_1+\ldots+n_r}}{(1-q^{n_1})^2\cdots(1-q^{n_r})^2} \qquad (r\in \mathbb{N}) 
\]
which recently attracted considerable interest, including the prime-detecting properties of its Fourier coefficients \cite{CIO24}. This function, whose $n$-th Fourier coefficient equals the sum of the products of the part multiplicities for partitions of $n$ with $r$ distinct part sizes, is a quasimodular form and admits an explicit expression in terms of Eisenstein series \cite{AR13, Bac24}. Variations, such as
\[
\sum_{\substack{n_1>\cdots>n_r>0\\ n_1,\ldots,n_r \text{ odd}}} \frac{q^{n_1+\ldots+n_r}}{(1-q^{n_1})^2\cdots(1-q^{n_r})^2} \qquad (r\in \mathbb{N})
\]
which also go back to MacMahon, can similarly be expressed in terms of higher level Eisenstein series \cite{NPS25}, such as (for $\tau$ in the upper half plane~$\IH$, $\k, R \in \IN$, and $r \in \{1,2,\ldots,R\}$)
\[
F_{R,r,\k} (\t) := 
\!\!\!\! \sum_{\substack{m \geq 1 \\ m \equiv r \pmod{R}}} \!\!\!\! 
m^{\k-1}\frac{q^m}{1-q^m} = R^{\k-1}\sum_{\substack{n_1\geq1\\n_2\geq0}} \left(n_2+\frac{r}{R}\right)^{\k-1} q^{R n_1 \left(n_2+\frac{r}{R}\right)}  \qquad \left(q:=e^{2\pi i \tau}\right).
\]
These Eisenstein series are typically not quasimodular, but still fit in the framework of indefinite theta functions and their false variants.  They also fit into the theory of higher level multiple Eisenstein series admitting an expression as a lattice sum over positive lattice points \cite{GKZ06, KT13, YZ15}. 

In this paper, we study such non-modular Eisenstein series of weight $\k=1$. We decompose these functions into an indefinite theta function $f_{R,\bm\a}$ and a false-indefinite theta function $g_{R,\bm\a}$ (up to lower rank corrections) as
\begin{equation}\label{eq:FRr_indef_falseindef_decomposition}
F_{R,r}(\t):= F_{R,r,1} (\t) =
f_{R,(0,r)} (\t) + g_{R,(0,r)} (\t) ,
\end{equation}
where, for $\bm\alpha\in \Z^2$,
\begin{align}
\label{eq:f_indef_definition}
f_{R,{\bm \a}} (\t) &:=
\frac{1}{4} \sum_{\substack{{\bm n} \in \IZ^2 + \frac{\bm{\a}}{R}\! \\ n_1,n_2 \neq 0}}
(\sgn (n_1) + \sgn (n_2))  q^{Rn_1n_2}, \\
\notag
g_{R,{\bm \a}} (\t) &:=
\frac{1}{4} \sum_{\substack{{\bm n} \in \IZ^2 + \frac{{\bm \a}}{R}\! \\ n_1,n_2 \neq 0}}
(1+ \sgn (n_1)  \sgn (n_2))  q^{Rn_1n_2} .
\end{align}
Recall that in Zwegers’ work on mock theta functions, Mordell integrals arose as obstructions to modularity \cite[Proposition 1.5]{Zwe02}. Similarly, we define a Mordell-type integral $\mathcal{I}_{R,\bm\a,\varrho}$ which captures both the false-indefinite theta function $g_{R,\bm\a}$ and its obstruction to modularity
\begin{equation}\label{eq:calI_definition}
\mathcal{I}_{R,{\bm \a},\varrho} (\t) :=
\frac{1}{2\pi i} 
\sum_{\substack{{\bm n} \in \IZ^2 + \frac{{\bm \a}}{R}\! \\ n_1,n_2 \neq 0}}
\frac{e^{  {2\pi i R\varrho n_1n_2} }}{Rn_1 n_2} 
\mathrm{PV}\!\int_{0}^\infty\!
\frac{x  e^{2 \pi i \left(\t-\varrho\right) x}}{x\!-\!Rn_1n_2}  dx \qquad \left(\varrho \in \IQ\right),
\end{equation}
where $\mathrm{PV}$ denotes the (Cauchy) principal value of the integral. As our first result, we derive the modular transformation of these functions, that is, an exact expression for $F_{R,r}$ at a cusp $\frac{h}{k}\in \Q$.

\begin{thm}\label{thm:modulartransformation}
Let $R \in \IN$, $r \in \{1,2,\ldots,R\}$, and $h, k \in \mathbb{N}_0$ with $0\leq h<k$ and $\gcd (h,k) =1$. Then, for any $z \in \IC$ with $z_1 > 0$, we have\footnote{
Here and throughout, we write $w\in\C$ as $w=w_1 +iw_2$ with $w_1,w_2 \in \IR$.
}

\begin{multline*}
F_{R,r} \!\lp \frac{h}{k} +  \frac{iz}{k^2} \rp =
- C_{R,r,k} \frac{\Log (z)}{z}
+ \frac{A_{R,r,h,k}}{z} + B_{R,r,h,k}
\\*
+ \frac{ik}{z} \sum_{\a_1,\a_2 \pmod{R}}  \psi_{R,r,h,k} ({\bm \alpha})
\lp f_{R,{\bm \a}} \!\lp \frac{h'}{k} + \frac{i}{z} \rp + \mathcal{I}_{R,{\bm \a},\frac{h'}{k}} \!\lp \frac{h'}{k} + \frac{i}{z} \rp \rp,
\end{multline*}
where $h' \in \IZ$ with $0 \leq h' < k$ and $hh' \equiv -1 \pmod{k}$,  $ \psi_{R,r,h,k} ({\bm \alpha})$ is the Weil multiplier \eqref{eq:psi_definition}, and $A_{R,r,h,k}$, $B_{R,r,h,k}$, and $ C_{R,r,k}$ are constants defined in \eqref{eq:alpha_definition}, \eqref{eq:beta_definition}, and \eqref{eq:gamma_definition}, respectively.  
\end{thm}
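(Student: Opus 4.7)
The plan is to decompose $F_{R,r}=f_{R,(0,r)}+g_{R,(0,r)}$ as in \eqref{eq:FRr_indef_falseindef_decomposition} and analyse the modular transformations of the indefinite theta $f$ and the false-indefinite theta $g$ separately. The indefinite piece should transform classically (in the spirit of Zwegers), producing $f$ at the dual cusp $\frac{h'}{k}+\frac{i}{z}$ with the Weil multiplier as its sole output. The false-indefinite piece, by contrast, is not modular; its transformation should produce the Mordell-type integral $\mathcal I$ together with the three explicit polar contributions $-C\Log(z)/z + A/z + B$, and this is where the main work will lie.

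For the indefinite piece, I would use a standard Poisson-summation / Jacobi-inversion argument. Write $f_{R,\bm\alpha}$ as the antisymmetrisation $\frac12(\sum_{n_1,n_2>0}-\sum_{n_1,n_2<0})q^{Rn_1n_2}$, apply Poisson summation in one of the $n_j$ variables, and track the Gauss sums produced by the substitution $\tau=\frac{h}{k}+\frac{iz}{k^2}$. The result should collapse to $\frac{ik}{z}\sum_{\bm\beta}(\text{Gauss-sum weight})\,f_{R,\bm\beta}(\frac{h'}{k}+\frac{i}{z})$, with the weight matching $\psi_{R,r,h,k}(\bm\beta)$ from \eqref{eq:psi_definition} after summing over the $R$-residue data.

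For the false-indefinite piece, I would substitute $\tau=\frac{h}{k}+\frac{iz}{k^2}$ into $g_{R,(0,r)}$ and insert the Mellin representation
\[
e^{-2\pi zRn_1n_2/k^2}=\frac{1}{2\pi i}\int_{(\sigma)}\Gamma(s)\lp \tfrac{2\pi zRn_1n_2}{k^2}\rp^{\!-s}ds\qquad(\sigma>2).
\]
After splitting $n_1,n_2$ by residue classes mod $k$, the inner double sum factors into a product of two Hurwitz zeta functions, yielding an integrand with a double pole at $s=1$ and a simple pole at $s=0$. Shifting the contour to $\mathrm{Re}(s)=-\tfrac12$, the double-pole residue at $s=1$ produces the $-C_{R,r,k}\Log(z)/z$ term (with $\log z$ arising from $\tfrac{d}{ds}z^{-s}\big|_{s=1}$) together with the $A_{R,r,h,k}/z$ term, while the simple pole at $s=0$ yields the constant $B_{R,r,h,k}$. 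Matching these residues against \eqref{eq:alpha_definition}--\eqref{eq:gamma_definition} identifies the three explicit contributions.

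The remaining integral on $\mathrm{Re}(s)=-\tfrac12$ must then be identified with $\frac{ik}{z}\sum_{\bm\beta}\psi_{R,r,h,k}(\bm\beta)\,\mathcal I_{R,\bm\beta,h'/k}(\frac{h'}{k}+\frac{i}{z})$. I would apply the Hurwitz functional equation to both zeta factors, converting the Dirichlet-series arguments into a sum over $(n_1,n_2)\in\IZ^2+\bm\beta/R$ in the dual variable $1/z$; the residual Mellin factor $\Gamma(s)(1/z)^{s-1}$, after a further contour deformation past the real axis, should produce precisely the kernel $\mathrm{PV}\int_0^\infty\frac{xe^{2\pi i(\tau-\varrho)x}}{x-Rn_1n_2}dx$ appearing in \eqref{eq:calI_definition}. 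The main obstacle will be this final identification: the principal value, the explicit phase $e^{2\pi iR\varrho n_1n_2}$, and the boundary contributions from the degenerate residues $\alpha_j\equiv 0\pmod R$ (where a Hurwitz zeta has a pole lying on the shifted contour) must all emerge with exactly the normalisation of \eqref{eq:calI_definition}, and any leftover polar remainders must be shown to be precisely absorbed into $A_{R,r,h,k}$, $B_{R,r,h,k}$ and $C_{R,r,k}$.
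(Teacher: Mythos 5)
Your route is genuinely different from the paper's. The paper does not split $F_{R,r}=f+g$ and transform the two pieces separately; instead it writes $F_{R,r}(\tfrac{h}{k}+\tfrac{iz}{k^2})$ as a finite sum of Lambert-type series $\sum_{m\ge0}(\zeta_k^{-\nu}e^{(m+\alpha)t}-1)^{-1}$ and applies a shifted Euler--Maclaurin formula to each, with the polar terms $-C\Log(z)/z+A/z+B$ coming from the explicit boundary terms and the main integral, and the pair $f+\mathcal I$ emerging \emph{together} from a single reworking of the Euler--Maclaurin error integral into principal-value form (Lemma~\ref{lem:cos_shifted_integrals}); the Weil multiplier then appears by reorganizing the residue classes. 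Your Mellin/Hurwitz-functional-equation strategy is a legitimate alternative and the residue bookkeeping at $s=1$ and $s=0$ is standard: note only that the double pole at $s=1$ occurs solely for the residue classes with trivial phase (whence the factor $\delta_{\gcd(R,k)\mid r}$ in $C_{R,r,k}$), the classes with nontrivial phase giving at most simple poles that feed $A_{R,r,h,k}$; and that $f_{R,\bm\alpha}$ is modular only up to a constant, so your Poisson-summation step will leave a constant correction that must be absorbed into $B_{R,r,h,k}$.

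The genuine gap is the step you yourself flag: identifying the shifted-contour remainder with $\tfrac{ik}{z}\sum_{\bm\alpha}\psi_{R,r,h,k}(\bm\alpha)\,\mathcal I_{R,\bm\alpha,h'/k}(\tfrac{h'}{k}+\tfrac{i}{z})$. This is the actual content of the theorem (it is what makes the transformation usable in the circle method), and "should produce precisely the kernel" is an assertion, not an argument. Concretely, after the functional equation you face a conditionally convergent dual lattice sum against a Mellin--Barnes factor of the shape $\Gamma(s)\Gamma(1-s)^{2}(\cdots)^{-s}$; interchanging the sum with the contour integral is only valid under a symmetric summation convention (the paper needs exactly such a $\sum^{*}$ in Lemma~\ref{lem:cos_shifted_integrals}), and the poles of the dual sum landing on the real axis are what generate both the $\mathrm{PV}$ prescription and the half-residue terms $\tfrac{\pi i}{2}(\sgn(n_1)+\sgn(n_2))q^{Rn_1n_2}$ -- i.e.\ part of $f_{R,\bm\alpha}$ itself is produced by the false-indefinite piece in this normalization, not only by the indefinite piece. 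Until you carry out this deformation explicitly, verify that the resulting character sums coincide with the Weil multiplier \eqref{eq:psi_definition} (and agree with the Gauss sums from your Poisson step, so the two halves recombine into a single $\sum_{\bm\alpha}\psi(f+\mathcal I)$), and check the degenerate classes $\alpha_j\equiv0\pmod R$ where a Hurwitz zeta pole sits on the contour, the proof is not complete.
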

\begin{remark}
The antisymmetric difference $F_{R,r}-F_{R,R-r}$ can be expressed purely in terms of~$f_{R,\bm{\a}}$. The indefinite theta function $f_{R,\bm{\a}}$ (up to the addition of a constant term) is indeed a modular form, and the associated modular invariance follows from Theorem~\ref{thm:modulartransformation}. The more intricate form of the theorem reflects the non-modularity of $F_{R,r}$.
\end{remark}

This transformation plays a central role in our study of the following two partition statistics: the number of parts in arithmetic progressions and the reciprocal sum of hook lengths in arithmetic progressions. More precisely, for the first, for $R \in \IN$, $r \in \{1,2,\ldots,R\}$ and a partition $\lambda$, we let
\[
T_{R,r}(\lambda) := |\{\lambda_j: \lambda_j\equiv r \pmod R\}| 
\]
be the number of parts of $\lambda$ in a fixed residue class $r\pmod R$ and we let 
\[
\mathcal{T}_{R,r}(n) := \sum_{\lambda \vdash n} T_{R,r}(\lambda)
\]
be the number of parts $r\pmod R$ in all partitions of $n$ as studied in \cite{BM15,BM17} (this was denoted by $\wh{T}_{r,R} (n)$ there). By \cite[Lemma~2.1]{BM15}, its generating function is given by
\begin{equation}\label{eq:That_generating_function}
\sum_{n\geq 1} \calT_{R,r}(n) q^{n-\frac{1}{24}} = \frac{F_{R,r} (\t)}{\eta (\t)},
\end{equation}
where $\eta (\t) := q^{\frac{1}{24}} \prod_{n \geq 1} (1-q^n)$ is the \textit{Dedekind eta function}.

There is a bias in $\mathcal{T}_{R,r}(n)$ towards smaller values of $r$ (see also \cite{DS05}). In \cite{BM15}, Beckwith and Mertens used the modularity of $F_{R,r}-F_{R,R-r}$ to obtain more detailed asymptotics for $\mathcal{T}_{R,r}(n)-\mathcal{T}_{R,R-r}(n)$. Moreover, they obtained \cite{BM17} the leading contributions of $\mathcal{T}_{R,r}$ using Wright's treatment of the Circle Method, as well as the asymptotic Euler--Maclaurin summation formula (for more details, see Corollary~\ref{cor:asymptoticleading}). 
In particular, since the leading term, as $n\to \infty$, is independent of $r$, it follows that parts in partitions of $n$ asymptotically become equidistributed over all residue classes. 

In this paper, we give a more complete treatment of the asymptotic behavior of~$\mathcal{T}_{R,r}(n)$. We do so by studying the integral error term in the Euler--Maclaurin formula recalled in Section~\ref{sec:preliminaries} and recasting it as a Mordell-type integral to obtain the exact behavior under modular transformations in Theorem~\ref{thm:modulartransformation}. This makes it amenable for sharper asymptotic results (and an exact formula if the weight were appropriate). In particular, our main result is the following more refined result for the asymptotic behavior of $\mathcal{T}_{R,r}(n)$.

\begin{thm}\label{thm:asymptotic} Let $n,R \in \IN$, $24\nmid R$, and $r \in \{1,2,\ldots, R\}$. Then,\footnote{Here and throughout, we use the notation $\d_{\mathcal{S}} :=1$ if a statement $\mathcal{S}$ is true and we let $\d_{\mathcal{S}} :=0$ otherwise.} as $n \to \infty$ with $n_s:=n-\frac{1}{24}$, 
\begin{align*}
\calT_{R,r}(n) 
& =\frac{1}{(24n_s)^{\frac{1}{4}}} \vast(
\sum_{k=1}^{\lfloor \sqrt{n} \rfloor} \frac{\d_{\gcd(R,k) \mid r}}{\mathrm{lcm} (R,k)}
K^{[1]} (n,k) 
\lp \log\!\lp \frac{2\sqrt{6n_s}}{k} \rp I_{\frac{1}{2}} \!\lp \frac{\pi}{k} \sqrt{\frac{2n_s}{3}} \rp - \mathbb I_{\frac{1}{2}} \!\lp \frac{\pi}{k} \sqrt{\frac{2n_s}{3}} \rp \rp \\
&\quad+ {2\pi}
\sum_{k=1}^{\lfloor \sqrt{n} \rfloor} \frac{K_{R,r}^{[2]} (n,k)}{k^2}
I_{\frac{1}{2}}\!\lp \frac{\pi}{k} \sqrt{\frac{2n_s}{3}} \rp 
+\frac{\pi}{\sqrt{6n_s}} 
\sum_{k=1}^{\lfloor \sqrt{n} \rfloor} \frac{K_{R,r}^{[3]} (n,k)}{k}
I_{\frac{3}{2}} \!\lp \frac{\pi}{k} \sqrt{\frac{2n_s}{3}} \rp  \\
&\quad+2\pi
\sum_{\substack{\a_1,\a_2 \ge 1 \\ \a_1\a_2 < \frac{R}{24}}}  
\lp 1 - \frac{24\a_1\a_2}{R} \rp^{\!\frac{1}{4}}
\sum_{k=1}^{\lfloor \sqrt{n} \rfloor} \frac{K_{R,r,\bm{\a}}^{[4]} (n,k)}{k}
I_{\frac{1}{2}} \!\lp \frac{\pi}{k} \sqrt{\frac{2}{3}\lp 1 - \frac{24\a_1\a_2}{R} \rp n_s} \rp  \\
&\quad+\!\!\!
\sum_{\a_1,\a_2 =0}^{R-1} \sum_{k=1}^{\lfloor \sqrt{n} \rfloor}
\sum_{\k_1,\k_2=0}^{k-1}\!\!
\frac{K_{R,r,\bm{\a},\bm{\k}}^{[5]} (n,k)}{k} 
\operatorname{PV}\! \int_0^{\frac{1}{24}} 
\Phi_{R,k,R \bm{\k}+\bm{\a}}\! \left(\frac{1}{24}-t\right) \!(24t)^{\frac{1}{4}} 
I_{\frac{1}{2}}\! \lp \! \frac{4\pi}{k} \sqrt{n_st}  \rp \! dt\!\vast) \\
&\hspace{351pt} \!\!\!\!\!\! + O_R\!\lp n^{\frac{3}{4}} \log (n) \rp,
\end{align*}
where $K^{[1]} (n,k)$, $K_{R,r}^{[2]} (n,k)$, $K_{R,r}^{[3]} (n,k)$, $K_{R,r,\bm{\a}}^{[4]} (n,k)$, and $K_{R,r,\bm{\a},\bm{\k}}^{[5]} (n,k)$ are Kloosterman-like sums defined in \eqref{eq:Kloosterman5_definition}, \eqref{eq:Kloosterman4_definition}, \eqref{eq:Kloosterman2_3_definition}, and \eqref{eq:Kloosterman1_definition}, respectively, $I_\kappa$ is the Bessel function \eqref{eq:I_Bessel_Hankel_integral}, $\mathbb I_\kappa$ its derivative with respect to the order as in \eqref{eq:I_Bessel_derivative_c_infty_integral}, and $\Phi_{R,k,R \bm{\k}+\bm{\a}}$ an integral kernel defined in \eqref{eq:phi_function_definition}.
\end{thm}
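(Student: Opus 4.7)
The plan is to apply Wright's variant of the Hardy--Ramanujan Circle Method, with Theorem~\ref{thm:modulartransformation} supplying the cuspidal behavior on every Farey arc. By Cauchy's theorem and \eqref{eq:That_generating_function},
\[
\mathcal{T}_{R,r}(n) = \int_{0}^{1} \frac{F_{R,r}(t+iY)}{\eta(t+iY)}\, e^{-2\pi i n_s (t+iY)} \, dt,
\]
where $Y$ is chosen of order $1/\sqrt{n_s}$ so that the integrand's saddle lies on the contour. I would dissect $[0,1]$ into Farey arcs around each $h/k$ with $1 \le k \le \lfloor\sqrt{n}\rfloor$ and, on each arc, change variables $\tau = h/k + iz/k^2$, so that the relevant saddle sits at $z = k/\sqrt{24\,n_s}$.

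On the arc at $h/k$, I would insert Theorem~\ref{thm:modulartransformation} for $F_{R,r}$ together with the classical $\eta$-transformation sending $\tau$ to $\tau' = h'/k + i/z$. This rewrites the integrand on the transformed side, where $|q'|$ is exponentially small and one expands $1/\eta(\tau') = q'^{-1/24}(1+q'+\cdots)$. Pairing the leading $q'^{-1/24}$ with the three explicit principal pieces $-C_{R,r,k}\Log(z)/z$, $A_{R,r,h,k}/z$, and $B_{R,r,h,k}$ produces $z$-integrals of the form
\[
\int z^{\nu}\, e^{\frac{\pi}{12z}+\frac{2\pi n_s z}{k^2}}\,dz \quad \text{and} \quad \int z^{\nu}\, \Log(z)\, e^{\frac{\pi}{12z}+\frac{2\pi n_s z}{k^2}}\,dz,
\]
which by the Hankel-contour representation \eqref{eq:I_Bessel_Hankel_integral} evaluate to Bessel functions $I_{1/2}$ and $I_{3/2}$ at argument $\tfrac{\pi}{k}\sqrt{2n_s/3}$; differentiating that representation in the order via \eqref{eq:I_Bessel_derivative_c_infty_integral} handles the $\Log$-integral and produces the joint $\log(2\sqrt{6n_s}/k)\,I_{1/2}$ and $-\mathbb I_{1/2}$ combination. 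Summing over $h \pmod k$ bundles all phases (from the $\eta$-multiplier, the Weil multiplier $\psi_{R,r,h,k}$, and the constants $A,B,C$) into $K^{[1]}$, $K^{[2]}_{R,r}$, and $K^{[3]}_{R,r}$, yielding the first three lines of the asymptotic.

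For the indefinite piece $f_{R,\bm\alpha}(\tau')$, I would expand the lattice sum \eqref{eq:f_indef_definition}: the minimal positive exponent of $q'$ is $\alpha_1\alpha_2/R$, attained by $\bm n = \bm\alpha/R$ when $\alpha_1,\alpha_2 \ne 0$. Combined with $q'^{-1/24}$, the saddle exponent becomes $\frac{\pi}{12 k^2 z}(1 - 24\alpha_1\alpha_2/R) + \frac{2\pi n_s z}{k^2}$, which grows exponentially only when $\alpha_1\alpha_2 < R/24$. Another Hankel-contour evaluation then yields $I_{1/2}$ at the shifted argument $\tfrac{\pi}{k}\sqrt{(2/3)(1-24\alpha_1\alpha_2/R) n_s}$, with the $h$-sums assembling into $K^{[4]}_{R,r,\bm\alpha}$. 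For the Mordell-type piece $\mathcal{I}_{R,\bm\alpha,h'/k}(h'/k+i/z)$, I would substitute its definition \eqref{eq:calI_definition} and interchange the inner PV $x$-integration, the $z$-integration along the Farey arc, and the $1/\eta$-expansion. Reparametrizing $x = R n_1 n_2$ by a variable $t \in (0, \tfrac{1}{24})$ brings the $z$-integral back into Hankel form and produces $I_{1/2}$ of a $t$-dependent argument, while the kernel $\Phi_{R,k,R\bm\kappa+\bm\alpha}(\tfrac{1}{24}-t)$ captures the $\bm\kappa$-phases arising from writing $\bm n \pmod k$ as $\bm n \equiv \bm\kappa + \bm\alpha/R$; the remaining $h$-summation produces $K^{[5]}_{R,r,\bm\alpha,\bm\kappa}$ and the final line.

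The main obstacle is proving the error bound $O_R(n^{3/4}\log n)$ uniformly across the many competing subleading contributions. These include the tails of the Farey arcs away from the saddle, the higher $p(m)q'^{m}$ coefficients of $1/\eta(\tau')$ for $m \ge 1$, the lattice terms of $f_{R,\bm\alpha}$ with $\alpha_1\alpha_2 \ge R/24$ (which remain bounded on the transformed side but must be integrated against the oscillatory factor), and the subleading components of $\mathcal{I}$ near the endpoints of its $t$-integration; each has to be controlled uniformly in $k \le \sqrt{n}$ and $\bm\alpha \pmod R$. Most delicate is the $\Log(z)/z$ principal part: the $\log(2\sqrt{6n_s}/k)\,I_{1/2}$ and $-\mathbb I_{1/2}$ contributions must be disentangled so that no stray polynomial-in-$\log n$ factor is left outside the stated error, which I anticipate requires a careful Hankel-contour manipulation with the logarithm inserted explicitly, in the same spirit as the asymptotic Euler--Maclaurin summation referenced in the introduction.
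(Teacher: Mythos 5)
Your plan follows the paper's proof essentially step for step: a Farey dissection (the paper uses Rademacher's path rather than a fixed-height Wright contour, but the arc estimates are the same), insertion of Theorem~\ref{thm:modulartransformation} together with the $\eta$-transformation, a five-way split matching the five terms of that theorem, Hankel-contour evaluation of the principal parts into $I_{1/2}$, $I_{3/2}$, and $\mathbb{I}_{1/2}$, and reorganization of the truncated Mordell integral into the kernel $\Phi$ integrated over $[0,\tfrac{1}{24}]$. The error analysis you flag at the end is exactly where the paper spends its effort (Lemmas~\ref{lem:indef_part_nonprincipal} and~\ref{lem:false_indef_part_nonprincipal} and the contour estimates in the proof of Lemma~\ref{lem:That_5_asymptotic_result}), including the principal-value treatment of the pole of $\Phi$ inside $[0,\tfrac{1}{24}]$, which is where the hypothesis $24\nmid R$ and the $\log(n)$ in the error term enter.
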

\begin{remark}
We impose the condition $24 \nmid R$ to avoid complications in Lemma~\ref{lem:That_5_asymptotic_result} (and subsequently in Lemma~\ref{eq:leading_exponential_asymptotic_That_5}). This condition ensures that the kernel function $\Phi_{R,k,R \bm{\k}+\bm{\a}}$ does not have a pole on the boundary of the integration interval $[0,\frac{1}{24}]$.
It would be interesting to study the case $24 \mid R$. Such considerations would also be relevant in contexts involving false theta functions to generalize the methods used to obtain exact formulas found in \cite{BN19, Ces23}. 
\end{remark}

More concretely, the leading exponential asymptotic of $\calT_{R,r} (n)$ is given in the following corollary.
\begin{cor}\label{cor:leading_exponential_asymptotic}
Let $n,R \in \IN$, $24\nmid R$, and $r \in \{1,2,\ldots, R\}$. Then we have
\begin{align*}
\calT_{R,r} (n)  
&= 
\frac{e^{\pi \sqrt{\frac{2n_s}{3}}}}{4 \pi R \sqrt{2 n_s}}
\left( \phantom{2 \CE\!\lp 2 \pi \sqrt{\frac{2n_s}{3}} \rp} \hspace{-2.6cm}
\log\!\lp n_s\rp - \log \!\lp \frac{\pi^2}{6} \rp 
- 2 \psi\!\lp \frac{r}{R} \rp - 2 \log (R)
+ \frac{\pi(2r-R)}{2\sqrt{6n_s}}
+ \frac{R-2r}{4 n_s}\right.
\\ & \quad \quad \ \left.
+ 2 \CE\!\lp 2 \pi \sqrt{\frac{2n_s}{3}} \rp
+ 2 \int_0^{\e_R} \Psi_{R,r} (t) 
e^{- \pi \sqrt{\frac{2n_s}{3} } \left(1-\sqrt{1-24t}\right)} dt
\right)
+O_R \!\lp e^{\pi \sqrt{\frac{2}{3} (1- 24\e_R) n_s }} \rp \! ,
\end{align*}
where\footnote{
Here and throughout, we let $\z_N := e^{\frac{2\pi i}{N}}$ for $N \in \IN$.
}
\begin{equation}\label{eq:leading_exp_term_E_Psi_definitions}
\CE (x) := \mathrm{Ei}(x) e^{-x},
\ \ 
\e_R := \frac{1}{2} \mathrm{dist} \!\lp \frac{1}{24}, \frac{\IZ}{R} \rp,
\ \ \mbox{and} \ \ 
\Psi_{R,r} (t) := \sum_{\substack{n_1,n_2\in \Z\setminus \{0\}}} 
\frac{ R^2 t\zeta_R^{r n_1}}{n_1n_2(Rt-n_1n_2)}.
\end{equation}
Here $\psi(x)\!:=\!\frac{\Gamma'(x)}{\Gamma(x)}$ denotes the digamma function, $\mathrm{Ei} (x) := - \mathrm{PV} \int_{-x}^\infty \frac{e^{-t}}{t} dt$ for $x>0$ the exponential integral, and $\mathrm{dist} (x,S)$ the Euclidean distance between a point $x$ and a set $S$.
\end{cor}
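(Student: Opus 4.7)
The proof extracts from Theorem~\ref{thm:asymptotic} the terms that exceed the claimed error $O_R(e^{\pi\sqrt{\frac{2(1-24\e_R)n_s}{3}}})$ and evaluates each in closed form via explicit Bessel identities. Using $I_\kappa(x)\sim e^x/\sqrt{2\pi x}$, any $k\ge 2$ contribution from the first four sums has exponential factor at most $e^{\pi\sqrt{n_s/6}}$, hence is absorbed. At $k=1$, each term in the fourth sum with $\a_1\a_2\ge 1$ contributes $e^{\pi\sqrt{\frac{2(1-24\a_1\a_2/R)n_s}{3}}}$; since $\a_1\a_2/R$ lies in $\IZ/R$ and is therefore at distance at least $2\e_R$ from $1/24$, we have $24\a_1\a_2/R\ge 24\e_R$ and this is absorbed as well. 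The $k\ge 2$ piece of the fifth sum is bounded using $I_{1/2}(4\pi\sqrt{n_st}/k)\le e^{\pi\sqrt{2n_s/3}/k}$ at $t=1/24$ together with uniform control on the kernel $\Phi_{R,k,R\bm\k+\bm\a}$ on $[0,1/24]$, which is available by the hypothesis $24\nmid R$.

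The $k=1$ pieces of the first three sums provide the polynomial-in-$n_s^{-1/2}$ corrections and the $\CE$ term. The Kloosterman-like sums collapse at $k=1$ (only $h=0$ survives) to elementary expressions in $1/R$, $\psi(r/R)$, $r$, and $R$. Substituting
\[
I_{1/2}(x) = \sqrt{\tfrac{2}{\pi x}}\sinh(x), \qquad I_{3/2}(x) = \sqrt{\tfrac{2}{\pi x}}\!\lp\cosh(x)-\tfrac{\sinh(x)}{x}\rp,
\]
together with the order-derivative series
\[
\mathbb I_{1/2}(x) = I_{1/2}(x)\log(x/2) - \sum_{m\ge 0} \frac{(x/2)^{2m+1/2}}{m!\,\Gamma(m+3/2)}\psi(m+3/2),
\]
at $x = \pi\sqrt{2n_s/3}$ and retaining only the leading exponential $e^x/\sqrt{2\pi x}$ reproduces the $\log(n_s)-\log(\pi^2/6)-2\psi(r/R)-2\log(R)$, $\pi(2r-R)/(2\sqrt{6n_s})$, and $(R-2r)/(4n_s)$ contributions. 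The $2\CE(2\pi\sqrt{2n_s/3})$ term arises by rewriting the digamma sum via $\psi(m+3/2) = -\gamma-2\log 2 + 2\sum_{j=0}^m (2j+1)^{-1}$ and identifying the resulting series with $\mathrm{Ei}(2x)e^{-x}$.

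For the $k=1$, $\bm\k=\bm 0$ contribution of the fifth sum, I substitute $u = \frac{1}{24}-t$, converting the Bessel argument into $\pi\sqrt{2n_s/3}\sqrt{1-24u}$, and split $\int_0^{1/24} = \int_0^{\e_R} + \int_{\e_R}^{1/24}$. On $[\e_R,1/24]$ the Bessel argument is at most $\pi\sqrt{2(1-24\e_R)n_s/3}$, so the tail is absorbed in the error. On $[0,\e_R]$, which by the definition of $\e_R$ avoids all poles of $\Phi_{R,1,\bm\a}(u)$ (located in $\IZ/R$), I replace $I_{1/2}$ by its leading exponential form, factor out $e^{\pi\sqrt{2n_s/3}}$, and invoke Fourier inversion on $(\IZ/R\IZ)^2$ to reduce $\sum_{\bm\a} K^{[5]}_{R,r,\bm\a,\bm 0}(n,1)\,\Phi_{R,1,\bm\a}(u)$ to $\Psi_{R,r}(u)$ up to an explicit constant. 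Tracking the prefactor $(24t)^{1/4} = (1-24u)^{1/4}$ through the substitution yields the displayed $\Psi_{R,r}$-integral.

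The principal obstacle is the clean extraction of $\CE(2\pi\sqrt{2n_s/3})$ from $\mathbb I_{1/2}$, which requires a precise digamma-to-exponential-integral identity composed with the Bessel asymptotic so that only the correct polynomial-in-$n_s^{-1/2}$ terms remain after cancellations. A secondary difficulty is verifying that the Fourier collapse $\sum_{\bm\a} K^{[5]}_{R,r,\bm\a,\bm 0}(n,1)\,\Phi_{R,1,\bm\a}(u) = (\mathrm{const})\,\Psi_{R,r}(u)$ holds identically and that the PV prescription on each side (coming from the poles of $\Phi$ versus the condition $Rt-n_1n_2=0$ in $\Psi_{R,r}$) matches; this is the step most sensitive to the $24\nmid R$ hypothesis.
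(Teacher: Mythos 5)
Your route is essentially the paper's: isolate the $k=1$ terms of Theorem~\ref{thm:asymptotic}, substitute the closed forms of $I_{\frac12}$, $I_{\frac32}$, and $\mathbb I_{\frac12}$ (your digamma-series manipulation is just a derivation of \eqref{eq:I_Bessel_order_der_1_2_example}, which the paper cites from NIST 10.38.6), collapse the $k=1$ Kloosterman sums, and for the fifth term restrict the integral to $[0,\e_R]$ and use $K^{[5]}_{R,r,\bm{\a},\bm{0}}(n,1)=\frac1R\zeta_R^{-r\a_1}$ together with $\sum_{\bm{\a}}\zeta_R^{-r\a_1}\Phi_{R,1,\bm{\a}}=\Psi_{R,r}$. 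The treatment of $\calT^{[4]}_{R,r}(n)$ and of the $k\ge 2$ terms of the first four sums is also as in the paper.

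There is, however, one step whose stated justification fails: the tail estimates for $\calT^{[5]}_{R,r}(n)$. You claim ``uniform control on the kernel $\Phi_{R,k,R\bm{\k}+\bm{\a}}$ on $[0,\frac{1}{24}]$, which is available by the hypothesis $24\nmid R$.'' This is not available: $24\nmid R$ only guarantees that the (at most one) simple pole $t_{R,k,R\bm{\k}+\bm{\a}}$ lies in the \emph{interior} of $[0,\frac{1}{24}]$, not that it is absent, so the kernel is not uniformly bounded there and $\int_{\e_R}^{1/24}|\Phi_{R,k,R\bm{\k}+\bm{\a}}(t)|\,dt$ diverges. Consequently, bounding the Bessel factor by $e^{\pi\sqrt{2(1-24\e_R)n_s/3}}$ (for $k=1$) or by $e^{\pi\sqrt{n_s/6}}$ (for $k\ge2$) does not by itself control these principal-value integrals. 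The repair is exactly the $J^{[3]}$ argument in the proof of Lemma~\ref{lem:leading_exponential_asymptotic_That_5}: split off the pole term as in \eqref{eq:kernel_phi_pole_separation}, and on the symmetric window $[t_{R,k,R\bm{\k}+\bm{\a}}-\e_R,\,t_{R,k,R\bm{\k}+\bm{\a}}+\e_R]$ replace the integrand by the difference quotient $\bigl(f_{n,k}(t)-f_{n,k}(t_{R,k,R\bm{\k}+\bm{\a}})\bigr)/(t-t_{R,k,R\bm{\k}+\bm{\a}})$ and apply the mean value bound; this costs only an extra factor $\sqrt{n_s}$, which the error term $O_R\bigl(e^{\pi\sqrt{\frac23(1-24\e_R)n_s}}\bigr)$ still absorbs. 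Two minor points: your argument that $\frac{24\a_1\a_2}{R}\ge 24\e_R$ should go through $\frac{\a_1\a_2}{R}\ge\frac1R>2\e_R$ (from \eqref{eq:That5_delta_epsilon_R_definition_pole_interval}) rather than through the distance of $\frac{\a_1\a_2}{R}$ from $\frac1{24}$, which bounds the wrong quantity; and when matching the $\Psi_{R,r}$-integral one should note that on $[0,\e_R]$ there is genuinely no pole (the smallest positive pole location is at least $\frac1R>2\e_R$), so the PV there is an ordinary integral, as you correctly anticipate.
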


From Corollary~\ref{cor:leading_exponential_asymptotic} we conclude the following asymptotics for $\mathcal{T}_{R,r}(n)$.
\begin{cor}\label{cor:asymptoticleading} 
Let $n,R \in \IN$, $24\nmid R$, and $r \in \{1,2,\ldots, R\}$. Then, for any $L \in \IN_0$, we have
\begin{equation}\label{eq:leading_exponential_asymptotic_power_suppressed}
\calT_{R,r} (n)  = 
\frac{e^{\pi \sqrt{\frac{2n_s}{3}}}}{4 \pi R \sqrt{2 n_s}}
\lp
\log \!\lp n_s\rp 
+
\sum_{\ell =0}^L \frac{a_{R,r,\ell}}{n_s^{\frac{\ell}{2}}}
+
O_{L,R}\!\lp n^{-\frac{L+1}{2}} \rp
\rp,
\end{equation}
where
\begin{align*}
a_{R,r,0} := - \log \!\lp \frac{\pi^2}{6} \rp 
- 2 \psi\!\lp \frac{r}{R} \rp - 2 \log (R),\quad 
a_{R,r,1} := \frac{6 + \pi^2(2r-R)}{2 \sqrt{6} \pi}.
\end{align*}
Moreover, for $\ell \geq 2$, we define
\begin{equation*}
a_{R,r,\ell} := \frac{R-2r}{4} \d_{\ell=2} + 2 (\ell-1)!
\lp \frac{\sqrt{3}}{2\sqrt2\pi} \rp^{\ell}  \lp 1 - \! (-1)^{\ell} \!
\sum_{j = \left\lceil \frac{\ell}{4} \right\rceil}^{\left\lfloor \frac{\ell}{2} \right\rfloor}
\frac{\ell}{2j} \! \lp \frac{2\pi^2 R}{3} \rp^{2j} \!\!
\frac{B_{2j} B_{2j} \!\lp \frac{r}{R} \rp}{(\ell-2j)! (4j-\ell)! (2j)!}
\rp\! .
\end{equation*}
Here, for $j\in\IN_0$, $B_j$ and $B_j(x)$ denote the Bernoulli numbers and polynomials, respectively.
\end{cor}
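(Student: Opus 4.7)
The plan is to take Corollary~\ref{cor:leading_exponential_asymptotic} and expand its two non-elementary contributions, namely the exponential integral $2\CE(2\pi\sqrt{2n_s/3})$ and the Mordell-type integral over $[0,\e_R]$, as asymptotic series in $n_s^{-1/2}$ up to order $L$. The exponentially small error $O_R(e^{\pi\sqrt{(2/3)(1-24\e_R)n_s}})$ in that corollary is absorbed into $O_{L,R}(n^{-(L+1)/2})$ for any fixed $L$. The constants $-\log(\pi^2/6) - 2\psi(r/R) - 2\log R$ feed into $a_{R,r,0}$, the term $\frac{R-2r}{4n_s}$ yields the $\d_{\ell=2}$ summand, and $\frac{\pi(2r-R)}{2\sqrt{6n_s}}$ contributes the first piece of $a_{R,r,1}$.

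Writing $A := \pi\sqrt{2n_s/3}$, the classical asymptotic $\mathrm{Ei}(x)e^{-x} \sim \sum_{k\geq 0} k!\,x^{-k-1}$ with $x=2A$ gives the $n_s^{-\ell/2}$-coefficient of $2\CE(2A)$ as $2(\ell-1)!(\sqrt{3}/(2\sqrt{2}\pi))^{\ell}$ for $\ell\geq 1$; this accounts for the ``$1$'' inside the large parenthesis of the stated $a_{R,r,\ell}$.

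For the Mordell integral I apply Watson's lemma after the substitution $s = 1 - \sqrt{1-24t}$, under which $t = (2s-s^2)/24$ and $dt = (1-s)/12\,ds$:
\begin{equation*}
2\!\int_0^{\e_R}\!\!\Psi_{R,r}(t)\, e^{-A(1-\sqrt{1-24t})}\, dt \sim \sum_{k\geq 0} \frac{2\, b_k\, k!}{A^{k+1}}, \quad b_k := [s^k]\!\lp \Psi_{R,r}(t(s))\,\frac{1-s}{12} \rp\!.
\end{equation*}
A geometric expansion of $1/(Rt - n_1 n_2)$ in $\Psi_{R,r}$, combined with the Lipschitz-type formula $\sum_{n\ne 0} e^{2\pi i n x}/n^{2k} = -(2\pi i)^{2k}B_{2k}(x)/(2k)!$ and $2\zeta(2k) = (-1)^{k+1}(2\pi)^{2k}B_{2k}/(2k)!$, produces (after only even powers of $n_2$ survive and reindexing $j_s\geq 1$)
\begin{equation*}
\Psi_{R,r}(t) = -\sum_{j_s\geq 1} \frac{R^{2j_s}(2\pi)^{4j_s} B_{2j_s}(r/R) B_{2j_s}}{((2j_s)!)^2}\,t^{2j_s-1}.
\end{equation*}

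Finally I binomially expand $(2-s)^{2j_s-1}(1-s)/12$ to read off $[s^{\ell-2j_s}]$ as $\frac{2^{4j_s-\ell-1}(-1)^\ell}{12}\lp\binom{2j_s+1}{\ell-2j_s} - \binom{2j_s-1}{\ell-2j_s-2}\rp$, whose binomial support forces $\lceil\ell/4\rceil \leq j_s \leq \lfloor\ell/2\rfloor$, and multiplication by the prefactor $2(\ell-1)!/A^\ell$ then reproduces the shape of $a_{R,r,\ell}$. The main obstacle is the binomial simplification
\begin{equation*}
\binom{2j_s+1}{\ell-2j_s} - \binom{2j_s-1}{\ell-2j_s-2} = \frac{\ell\,(2j_s-1)!}{(\ell-2j_s)!\,(4j_s-\ell)!},
\end{equation*}
which, after setting $m := \ell-2j_s$, rests on the algebraic identity $(2j_s+1)(2j_s) - m(m-1) = \ell(4j_s+1-\ell)$; this is what produces the factor $\ell/(2j)$ appearing in the statement.
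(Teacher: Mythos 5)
Your proposal is correct and follows essentially the same route as the paper: both start from Corollary~\ref{cor:leading_exponential_asymptotic}, apply the substitution $1-\sqrt{1-24t}$ (the paper's $u$ is your $s/12$) followed by Watson's lemma/Taylor expansion at the endpoint, and evaluate the Taylor coefficients of $\Psi_{R,r}$ via the Bernoulli identity \eqref{eq:Bernoulli_polynomial_identity}. The only difference is bookkeeping --- you extract the coefficients by binomial expansion of $(2-s)^{2j-1}(1-s)$ and a binomial identity producing the factor $\ell/(2j)$, whereas the paper uses an inductive formula for the derivatives of $(1-12u)\Psi_{R,r}(u(1-6u))$ --- and your identities and constants all check out against the stated $a_{R,r,\ell}$.
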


Such results can also be used to deduce the asymptotic behavior of $\calT_{R,r} (n) - \calT_{R,R-r} (n)$, whose generating function is a modular form.
\begin{cor}\label{cor:leading_exponential_asymptotic_antisymmetric}
Let $n,R \in \IN$, and $r \in \{1,2,\ldots, R-1\}$. Then we have
\begin{multline*}
\mathcal{T}_{R,r}(n)-\mathcal{T}_{R,R-r}(n)
=
\frac{e^{\pi \sqrt{\frac{2n_s}{3}}}}{2 \pi R \sqrt{2 n_s}}
\left(b_{R,r,0}+\frac{b_{R,r,1}}{\sqrt{n_s}}+\frac{b_{R,r,2}}{n_s}\right)
\\
+
\frac{1}{R}  \sqrt{\frac{2}{n_s}}
\sum_{\substack{\a_1,\a_2 \geq 1 \\ \a_1\a_2 < \frac{R}{32}}}
\sin \!\lp \frac{2 \pi  r\a_1}{R} \rp
e^{\pi \sqrt{\frac{2}{3} \! \lp 1 - \frac{24\a_1\a_2}{R} \rp n_s}} 
+O_R\!\lp e^{\pi \sqrt{\frac{n_s}{6}}} \rp ,
\end{multline*}
where 
\begin{equation*}
b_{R,r,0} := \pi \cot\!\lp \frac{\pi r}{R} \rp, \quad
b_{R,r,1} := \frac{\pi(2r-R)}{2\sqrt{6}}, \andd
b_{R,r,2} := \frac{R-2r}{4}.
\end{equation*}
\end{cor}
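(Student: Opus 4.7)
The plan is to apply Corollary~\ref{cor:leading_exponential_asymptotic} to $\calT_{R,r}(n)$ and $\calT_{R,R-r}(n)$ and subtract, then extract the subleading exponentials from Theorem~\ref{thm:asymptotic}. Inside the parenthesis of Corollary~\ref{cor:leading_exponential_asymptotic}, the summands $\log(n_s)$, $-\log(\pi^2/6)$, $-2\log(R)$, and $2\CE(2\pi\sqrt{2n_s/3})$ are $r$-independent and cancel. Euler's reflection formula yields
\[
-2\psi\!\lp\frac{r}{R}\rp + 2\psi\!\lp\frac{R-r}{R}\rp = 2\pi\cot\!\lp\frac{\pi r}{R}\rp,
\]
and since the prefactor $\frac{1}{2\pi R \sqrt{2n_s}}$ in the statement is exactly twice the prefactor $\frac{1}{4\pi R \sqrt{2n_s}}$ in Corollary~\ref{cor:leading_exponential_asymptotic}, this gives $b_{R,r,0}=\pi\cot(\pi r/R)$. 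The linear-in-$r$ pieces $\frac{\pi(2r-R)}{2\sqrt{6n_s}}$ and $\frac{R-2r}{4n_s}$ likewise produce $b_{R,r,1}$ and $b_{R,r,2}$ after the same doubling.

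The error in Corollary~\ref{cor:leading_exponential_asymptotic} is too coarse to capture the subleading exponential terms, so for those I would return to Theorem~\ref{thm:asymptotic}. The $K^{[1]}$ contribution is $r$-symmetric (the condition $\gcd(R,k)\mid r$ is equivalent to $\gcd(R,k)\mid R-r$) and cancels. The subleading exponentials arise from the $K^{[4]}$ sum at $k=1$: combining $I_{\frac{1}{2}}(x)\sim e^x/\sqrt{2\pi x}$ with the $(1-24\a_1\a_2/R)^{1/4}$ factor collapses the $k=1$ term to a multiple of $\frac{1}{\sqrt{n_s}} K^{[4]}_{R,r,\bm\alpha}(n,1)\,e^{\pi\sqrt{(2/3)(1-24\a_1\a_2/R)n_s}}$, and the antisymmetric part of $K^{[4]}_{R,r,\bm\alpha}(n,1)$ under $r\mapsto R-r$ yields the character difference $\zeta_R^{r\a_1}-\zeta_R^{-r\a_1}=2i\sin(2\pi r\a_1/R)$. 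The restriction $\a_1\a_2<R/32$ is dictated by the targeted error $O_R(e^{\pi\sqrt{n_s/6}})$: the exponent condition $(2/3)(1-24\a_1\a_2/R)>1/6$ is equivalent to that bound, and the leftover terms with $R/32\le \a_1\a_2<R/24$ are absorbed into the error.

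Finally, all $k\ge 2$ contributions from Theorem~\ref{thm:asymptotic} are bounded by $e^{(\pi/k)\sqrt{2n_s/3}}\le e^{\pi\sqrt{n_s/6}}$ times a polynomial in $n$ and therefore sit in the error. The main technical obstacle is verifying the antisymmetric cancellation of the $K^{[5]}$ integral term, whose kernel $\Phi_{R,k,R\bm\kappa+\bm\alpha}$ was the reason for the hypothesis $24\nmid R$ in Theorem~\ref{thm:asymptotic}. The Remark following Theorem~\ref{thm:modulartransformation} predicts this cancellation: $F_{R,r}-F_{R,R-r}$ is expressible purely in terms of the indefinite theta function $f_{R,\bm\alpha}$, so the false-indefinite and Mordell-type obstructions that carry $\Phi_{R,k,R\bm\kappa+\bm\alpha}$ must drop out under $r\mapsto R-r$. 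This can be checked directly from the symmetry $\bm n\mapsto -\bm n$ in the defining sums for $g_{R,(0,r)}$ in \eqref{eq:f_indef_definition} and for $\mathcal{I}_{R,(0,r),\varrho}$ in \eqref{eq:calI_definition}, where the summand is even in $\bm n$ while the $\a_2=r$ and $\a_2=R-r$ lattices coincide under negation; the same cancellation is precisely what removes the $24\nmid R$ restriction in the present corollary.
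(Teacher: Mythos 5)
Your plan is in substance the paper's proof. The decisive ingredient in both is the exact identity $\calT^{[5]}_{R,r}(n)=\calT^{[5]}_{R,R-r}(n)$ — the paper derives it from $\varphi_{R,r,h,k}(\bm{\a})=\varphi_{R,R-r,h,k}(-\bm{\a})$ together with $\mathcal{I}_{R,\bm{\a},\varrho}=\mathcal{I}_{R,-\bm{\a},\varrho}$, which is the same $\bm{n}\mapsto-\bm{n}$ symmetry you invoke — and this is exactly what eliminates the $24\nmid R$ hypothesis; the reflection formula then gives $b_{R,r,0}$, and the $k=1$ term of the $\calT^{[4]}$ piece gives the second line with the cutoff $\a_1\a_2<\frac{R}{32}$ for precisely the reason you state. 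One bookkeeping caveat: as literally written, "apply Corollary~\ref{cor:leading_exponential_asymptotic} to $r$ and $R-r$ and subtract, then recover the subleading exponentials from Theorem~\ref{thm:asymptotic}" does not work, because the error term of Corollary~\ref{cor:leading_exponential_asymptotic} is $O_R\lp e^{\pi\sqrt{\frac23(1-24\e_R)n_s}}\rp$ with $24\e_R\leq\frac{6}{R}<\frac{24\a_1\a_2}{R}$, so it already dominates \emph{every} term on the second line of the corollary (and both cited results assume $24\nmid R$). The correct route — which the paper takes, and which your final paragraph effectively amounts to — is to work throughout with the decomposition \eqref{splitT} and the intermediate estimates \eqref{eq:leading_exponential_asymptotic_That_1_2_3} and \eqref{eq:leading_exponential_asymptotic_That_4}, whose errors are genuinely $O_R\lp e^{\pi\sqrt{\frac{n_s}{6}}}\rp$ and which hold for all $R$, after discarding $\calT^{[5]}$ by the symmetry above. (Also, your bound "$e^{\frac{\pi}{k}\sqrt{2n_s/3}}$ times a polynomial" for $k\geq2$ needs the observation that the polynomial factor at $k=2$ is in fact $O_R(n^{-1/2})$ and that for $k\geq3$ the exponent drops strictly below $\pi\sqrt{n_s/6}$; a genuine polynomial factor at $k=2$ would not sit inside the stated error.) So the gap is presentational rather than substantive.
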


\begin{remark}
Corollary~\ref{cor:asymptoticleading} can also be obtained through a combination of the Euler--Maclaurin summation formula and Wright's work on the Circle Method. This was done by Beckwith--Mertens \cite[Theorem~1.3]{BM17}, who derived \eqref{eq:leading_exponential_asymptotic_power_suppressed} for $L=0$.
Note that two small corrections to the leading asymptotic \cite[Theorem~1.1]{BM15} are required. In our notation, that theorem states that for $R\geq 3$ and $1\leq r < \frac{R}{2}$ with $\gcd(r,R)=1$ there are explicit constants $c_{R,r,0}$ and $c_{R,r,1}$ such that $\mathcal{T}_{R,r}(n)-\mathcal{T}_{R,R-r}(n)$ equals
\begin{equation*}
\frac{e^{\pi \sqrt{\frac{2n_s}{3}}}}{\sqrt{n_s}}\left(c_{R,r,0}+\frac{c_{R,r,1}}{\sqrt{n_s}} \right)+ O_R\!\left(n^2e^{\pi \sqrt{\frac{n_s}{6}}}\right).
\end{equation*} Corollary~\ref{cor:leading_exponential_asymptotic_antisymmetric} adds the term involving $b_{R,r,2}$ along with the exponentially smaller terms on the second line, whose presence can be traced to \eqref{eq:leading_exponential_asymptotic_That_4} and which can not be ignored for $R > 32$. 
Applying these corrections to \cite[Example 1.2]{BM15}, we instead find
\begin{equation*}
\mathcal{T}_{3,1}(n)-\mathcal{T}_{3,2}(n) = \frac{e^{\pi \sqrt{\frac{2n_s}{3}}}}{6\sqrt{6n_s}}\left(1-\frac{1}{2 \sqrt{2n_s}}+\frac{\sqrt{3}}{4 \pi n_s}\right) 
+ O\!\left(e^{\pi \sqrt{\frac{n_s}{6}}}\right).
\end{equation*}
Moreover we have the following example, where the presence of the second line from Corollary \ref{cor:leading_exponential_asymptotic_antisymmetric} is clear (as can also be confirmed by numerical computations),
\begin{equation*}
\mathcal{T}_{48,1}(n)-\mathcal{T}_{48,47}(n) 
= \frac{e^{\pi \sqrt{\frac{2n_s}{3}}}}{96\sqrt{2n_s}}
\left(\cot \!\lp \frac{\pi}{48} \rp 
-\frac{23}{\sqrt{6n_s}}+\frac{23}{2 \pi n_s}\right) 
+ \frac{\sin \!\lp \frac{\pi}{24} \rp e^{\pi \sqrt{\frac{n_s}{3}}}}{24\sqrt{2n_s}}
+ O\!\left(e^{\pi \sqrt{\frac{n_s}{6}}}\right) .
\end{equation*}
\end{remark}

From Corollary~\ref{cor:asymptoticleading}, we obtain an asymptotic formula for the probability that a part $m$ in a partition of~$n$ lies in an arithmetic progression of the form $m\equiv r\pmod{R}$, where all the parts are counted with the same weight.

\begin{cor}\label{cor:probability}
If $n,R \in \IN$, $24\nmid R$, and $r \in \{1,2,\ldots, R\}$, then we have, as $n\to\infty$, 
\begin{multline*}
\frac{\mathcal{T}_{R,r}(n)}{\mathcal{T}_{1,1}(n)}= \frac{1}{R}\left(1 -\frac{\log (R)+\psi\!\left(\frac{r}{R}\right)+\gamma}{\log\! \left(\frac{e^\gamma\sqrt{6n_s}}{\pi }\right)}\right.\\
\left.+\frac{\pi}{4 \sqrt{6n_s} \log\!\left(\frac{e^\gamma\sqrt{6n_s}}{\pi }\right)}\left(2 r-R-1+\frac{\left(6+\pi ^2\right) \left(\log(R)+\psi\!\left(\frac{r}{R}\right)+\gamma\right)}{\pi^2\log\!\left(\frac{e^\gamma\sqrt{6n_s}}{\pi }\right)}\right)\!\right)+O_R\!\left(\frac{1}{n\log(n)}\right),
\end{multline*}
where $\gamma=-\psi(1)$ denotes the Euler--Mascheroni constant.
\end{cor}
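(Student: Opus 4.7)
The plan is to apply Corollary~\ref{cor:asymptoticleading} with $L=2$ to both $\mathcal{T}_{R,r}(n)$ in the numerator and $\mathcal{T}_{1,1}(n)$ in the denominator, and then to expand the resulting quotient. Setting
\[
N(n) := \log(n_s) + a_{R,r,0} + \frac{a_{R,r,1}}{\sqrt{n_s}} + \frac{a_{R,r,2}}{n_s}, \qquad D(n) := N(n)\big|_{R=r=1},
\]
the common prefactor $e^{\pi\sqrt{2n_s/3}}/(4\pi\sqrt{2n_s})$ cancels, yielding
\[
\frac{\mathcal{T}_{R,r}(n)}{\mathcal{T}_{1,1}(n)} = \frac{1}{R}\cdot\frac{N(n) + O_R(n^{-3/2})}{D(n) + O(n^{-3/2})}.
\]

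First, I would rewrite the $\log$-plus-constant contributions in the form appearing in the statement. Using $\psi(1)=-\gamma$, the definitions of $a_{R,r,0}$ and $a_{1,1,0}$ give
\[
\log(n_s) + a_{1,1,0} = 2\log\!\left(\frac{e^\gamma\sqrt{6n_s}}{\pi}\right) =: 2\mathcal{L}, \qquad
\log(n_s) + a_{R,r,0} = 2\mathcal{L} - 2A,
\]
with $A := \log(R) + \psi(r/R) + \gamma$. An analogous computation from the formula for $a_{R,r,1}$ yields
\[
a_{1,1,1} = \frac{6+\pi^2}{2\sqrt{6}\,\pi}, \qquad a_{R,r,1} - a_{1,1,1} = \frac{\pi(2r-R-1)}{2\sqrt{6}}.
\]

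Next, I would expand $N/D = 1 + (N-D)/D$ using a geometric series for $1/D$. Since
\[
N(n) - D(n) = -2A + \frac{a_{R,r,1} - a_{1,1,1}}{\sqrt{n_s}} + O\!\left(\tfrac{1}{n}\right)
\]
and $1/D(n) = (2\mathcal{L})^{-1}\bigl(1 - a_{1,1,1}/(2\mathcal{L}\sqrt{n_s})\bigr) + O(1/(n\mathcal{L}^2))$, collecting the contributions of orders $1$, $\mathcal{L}^{-1}$, and $\mathcal{L}^{-1}n_s^{-1/2}$, together with the single cross term $-(-2A)\cdot a_{1,1,1}/(4\mathcal{L}^2\sqrt{n_s})$ surviving at order $\mathcal{L}^{-2}n_s^{-1/2}$, produces
\[
\frac{N}{D} = 1 - \frac{A}{\mathcal{L}} + \frac{\pi(2r-R-1)}{4\sqrt{6n_s}\,\mathcal{L}} + \frac{(6+\pi^2)A}{4\pi\sqrt{6n_s}\,\mathcal{L}^2} + O\!\left(\tfrac{1}{n\log n}\right),
\]
which, after dividing by $R$, matches the stated formula.

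The main (mild) obstacle is the error bookkeeping: the $1/n$ contributions to $N - D$, the $a_{R,r,2}/n_s$ and $a_{1,1,2}/n_s$ terms, the $O(n^{-3/2})$ tails from Corollary~\ref{cor:asymptoticleading}, and the higher-order terms in the geometric series for $1/D$ must all be checked to produce remainders of size $O_R(1/(n\log n))$ uniformly in $r$. Since $D \asymp \mathcal{L} = \log(n_s)$, each of these is dominated by $O_R(1/(n\mathcal{L}))$, which is precisely the claimed error.
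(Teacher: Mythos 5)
Your proposal is correct and is exactly the argument the paper intends: Corollary~\ref{cor:probability} is stated as a direct consequence of Corollary~\ref{cor:asymptoticleading}, obtained by dividing the two expansions and expanding the quotient, and your identifications $\log(n_s)+a_{1,1,0}=2\mathcal{L}$, $a_{R,r,1}-a_{1,1,1}=\frac{\pi(2r-R-1)}{2\sqrt{6}}$, and the single surviving cross term $\frac{Aa_{1,1,1}}{2\mathcal{L}^2\sqrt{n_s}}$ all check out. The error bookkeeping is also right, since every discarded term is $O_R\!\lp\frac{1}{n\mathcal{L}}\rp=O_R\!\lp\frac{1}{n\log(n)}\rp$.
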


Finally, we record a further application pertaining to hook lengths of partitions, which appear in various areas of combinatorics, number theory, and representation theory. For a partition, the number of standard Young tableaux is given in terms of the hook lengths by the Frame--Robinson--Thrall formula, which also describes the degree of the corresponding irreducible representation of the symmetric group. Hook lengths also arise in broader contexts, such as the Nekrasov--Okounkov identity and the determination of Siegel--Veech constants, which connects them to mathematical physics, modular forms, and enumerative geometry (see \cite[(6.12)]{NO} and \cite{CMZ18}, respectively).

For a partition $\lambda$, we let $H(\lambda) := \{h(\xi): \xi \in Y_\lambda\}$ be the set of hook lengths corresponding to the cells of the Young diagram $Y_\lambda$ of $\lambda$. We then define
\[
H_{R,r}(\lambda) := \sum_{\substack{h\in H(\lambda) \\ h\equiv r \pmod R}} \frac{1}{h},
\]
to be the sum of reciprocals of hook lengths of $\lambda$ in a given residue class $r\pmod R$ and we let 
\[
\mathcal{H}_{R,r}(\lambda) := \sum_{\lambda \vdash n} H_{R,r}(\lambda),
\]
be the sum of reciprocals of hook lengths congruent to $r \pmod{R}$ in all partitions of $n$. A key fact is that the multiset of hook lengths of all partitions of $n$ is
equal to the multiset obtained by taking the union over all partitions $\lambda$ of~$n$ of $\lambda_j$ repeated $\lambda_j$ times (see, e.g., \cite{BM01}). Hence, we have
\[  \mathcal{H}_{R,r}(n) = \mathcal{T}_{R,r}(n) \]
and the aforementioned results for parts of partitions in residue classes can be reinterpreted in terms of hook lengths. In particular, the asymptotic formula in Corollary~\ref{cor:probability} can be viewed as the probability that a hook length in a partition of~$n$ is $r \pmod{R}$, where the hook lengths~$h$ are counted with weight~$h^{-1}$. 
In a similar direction, Males showed that hook lengths asymptotically equidistribute in residue classes if each hook length is counted with the same weight \cite{Mal23}. Moreover, in~\cite{GOT24} the asymptotic distribution of hook lengths was obtained. 

The paper is organized as follows. After introducing the preliminary background in Section~\ref{sec:preliminaries}, in Section~\ref{sec:modular_transformations} we investigate $F_{R,r}$  near rationals and prove Theorem~\ref{thm:modulartransformation}. Next, in Section~\ref{sec:nonprincipal_bounds}, we bound the constants and nonprincipal parts following from that theorem and use these results to prove Theorem~\ref{thm:asymptotic} in Section~\ref{sec:circle_method}. Finally, in Section~\ref{sec:leading}, we obtain Corollaries~\ref{cor:leading_exponential_asymptotic}, \ref{cor:asymptoticleading}, and \ref{cor:probability}.

\section*{Acknowledgments}
The first author has received funding from the European Research Council (ERC) under the European Union’s Horizon 2020 research and innovation programme (grant agreement No. 101001179).
The second and third authors are supported by the SFB/TRR 191 “Symplectic Structure in Geometry, Algebra and Dynamics”, funded by the DFG (Projektnummer 281071066 TRR 191).

\section{Preliminaries}\label{sec:preliminaries}

\subsection{Bernoulli polynomials and the shifted Euler--Maclaurin formula}\label{sec:EM}
To develop the behavior of our generating functions near rational numbers, we employ a direct approach through the Euler--Maclaurin formula instead of relying on the decomposition into indefinite and false-indefinite theta functions or on the relation to the Maass--Eisenstein series. For this, we first recall Bernoulli numbers $B_\ell$ and Bernoulli polynomials $B_\ell (x)$ for $\ell \in \IN_0$ that satisfy
\begin{equation*}
\frac{t e^{tx}}{e^t-1} = \sum_{\ell\ge0} B_\ell (x) \frac{t^\ell}{\ell!}
\andd
B_\ell = B_\ell (0) .
\end{equation*}
We have, for $0 \leq x \leq 1$ if $\ell \geq 2$ and for $0<x<1$ if $\ell=1$ (see \cite[24.8.3]{NIST})
\begin{equation}\label{eq:Bernoulli_polynomial_identity}
B_\ell(x) = - \ell! \sum_{m\geq 1} \frac{2\cos\!\left(2\pi mx-\frac{\pi \ell}2\right)}{(2\pi m)^\ell}.
\end{equation}
Let $\wt{B}_{\ell}(x):=B_{\ell}(x - \lfloor x \rfloor)$.
A standard argument via integration by parts (see, e.g., Chapter 8 of~\cite{Olv}) leads to the shifted Euler--Maclaurin formula.

\begin{prop}[\textbf{Shifted Euler--Maclaurin formula}]\label{prop:euler_maclaurin}
Let $a \in [0,1]$ and $L\in\IN$. If $f$ is $L$ times continuously differentiable on $[M_1,M_2]$ where $M_1,M_2\in\mathbb{Z}$ with $M_2 > M_1$, then we have 
\begin{multline*}
\sum_{m=M_1}^{M_2-1} \!\! f(m+a)
=
\int_{M_1}^{M_2} f(x) dx
-
\sum_{m=0}^{L-1} \frac{B_{m+1} (a)}{(m+1)!}\lp f^{(m)}(M_1)-f^{(m)}(M_2) \rp
\\
+
\frac{(-1)^{L+1}}{L!} \int_{M_1}^{M_2} \wt{B}_L(x-a) f^{(L)}(x) dx.
\end{multline*}
\end{prop}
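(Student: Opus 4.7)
The plan is to prove the identity by induction on $L\geq 1$, each step amounting to a single integration by parts based on the antiderivative relation $B_{\ell+1}'(y)=(\ell+1)B_{\ell}(y)$. The only genuine subtlety is that $\wt B_L(x-a)$ is only piecewise smooth: for $L=1$ it has jump discontinuities, and for $L\geq 2$ its higher derivatives do. For $a\in(0,1)$ the breakpoints inside $[M_1,M_2]$ lie at $x=m+a$ for $m\in\{M_1,\dots,M_2-1\}$, dividing the interval into one initial piece $[M_1,M_1+a]$, the full-period pieces $[m+a,m+1+a]$ for $m\in\{M_1,\dots,M_2-2\}$, and one final piece $[M_2-1+a,M_2]$. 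The endpoint cases $a\in\{0,1\}$ reduce to the classical Euler--Maclaurin formula and can be verified by one-sided limits.

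For the base case $L=1$, I would use $\wt B_1(x-a)=(x-a)-\lfloor x-a\rfloor-\tfrac12$, which is an affine function on each subinterval. Integrating $\wt B_1(x-a)f'(x)$ by parts on each piece and summing, the jump of size $-1$ of $\wt B_1$ at each breakpoint $x=m+a$ produces exactly $f(m+a)$, thereby recovering the discrete sum; the pointwise antiderivative of $f'$ accumulates to $-\int_{M_1}^{M_2}f(x)\,dx$; and the boundary values at $M_1,M_2$ contribute $B_1(a)(f(M_1)-f(M_2))$. Rearranging yields the $L=1$ case.

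For the inductive step from $L$ to $L+1$, I would start from the integral remainder at level $L$ and write $\wt B_L(x-a)\,dx=\tfrac{1}{L+1}\,d\wt B_{L+1}(x-a)$ on each smooth piece, then integrate by parts once. Since $L+1\geq 2$ implies $B_{L+1}(0)=B_{L+1}(1)$, the interior boundary contributions at $m+a$ for $m\in\{M_1+1,\dots,M_2-1\}$ all cancel in telescoping pairs, as do the contributions at the inner endpoints $M_1+a$ and $M_2-1+a$ shared between the end pieces and the adjacent full-period pieces. What remains is a single boundary contribution at $M_1,M_2$ equal to $\frac{B_{L+1}(1-a)}{L+1}\bigl(f^{(L)}(M_2)-f^{(L)}(M_1)\bigr)$, together with a new integral in which $f^{(L)}$ is replaced by $f^{(L+1)}$ and $\wt B_L$ by $\wt B_{L+1}$. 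Applying the reflection identity $B_{L+1}(1-a)=(-1)^{L+1}B_{L+1}(a)$ and multiplying through by $(-1)^{L+1}/L!$ produces precisely the missing summand $-\frac{B_{L+1}(a)}{(L+1)!}\bigl(f^{(L)}(M_1)-f^{(L)}(M_2)\bigr)$ together with the new remainder carrying the correct sign $(-1)^{L+2}/(L+1)!$.

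The main obstacle is purely bookkeeping: carefully matching the contributions from the two incomplete end pieces $[M_1,M_1+a]$ and $[M_2-1+a,M_2]$ with those from the full-period interior pieces, and verifying that cancellations at interior breakpoints rest precisely on $B_{L+1}(0)=B_{L+1}(1)$. Once this is encapsulated in a single integration-by-parts lemma, both the base case and the inductive step follow uniformly; this is essentially the treatment in Chapter~8 of Olver, to which the statement already refers.
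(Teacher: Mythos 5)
Your proposal is correct and is exactly the standard integration-by-parts argument that the paper itself invokes without writing out (it simply cites Chapter~8 of Olver); the induction on $L$, the handling of the breakpoints of $\wt B_L(x-a)$ at $x=m+a$, the cancellation of interior boundary terms via $B_{L+1}(0)=B_{L+1}(1)$ for $L+1\ge 2$, and the use of the reflection identity $B_{L+1}(1-a)=(-1)^{L+1}B_{L+1}(a)$ all check out. No gaps.
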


If $f$ is smooth and decays sufficiently rapidly towards infinity along with its derivatives, then we can let $M_2 \to \infty$ in Proposition~\ref{prop:euler_maclaurin}. If further $f$ is holomorphic, then we can deform the resulting paths of integration to obtain the following corollary, where $D_\theta := \{ r e^{i \alpha} :  r \geq 0 \mbox{ and } |\alpha| \leq \theta \}$.

\begin{cor}\label{cor:shifted_euler_maclaurin_complex}
Let $a \in [0,1]$ and $L \in \mathbb{N}$.
Assume that, for some $0 \leq \theta < \frac{\pi}{2}$,  $f$ is a holomorphic function on a domain $\mathcal{D}$ containing $D_\theta$.
Also suppose that $f^{(m)} (w) \ll |w|^{-1-\e}$ with $\e > 0$ for $m \in \{0,1,\ldots,L\}$ as $|w| \to \infty$ in $\mathcal{D}$. Then, for any nonzero $z \in D_\th$, we have
\begin{equation*}
\sum_{m\ge0} f((m+a)z)
=
\frac{1}{z} \int_{0}^{\infty} \!\! f(x) dx
-
\sum_{m=0}^{L-1} \! \frac{B_{m+1} (a)}{(m+1)!}  f^{(m)}(0) z^m
+
\frac{(-1)^{L+1} z^L}{L!} \int_{0}^\infty \! \wt{B}_L(x-a) f^{(L)}(xz) dx.
\end{equation*}
\end{cor}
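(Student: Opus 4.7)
The plan is to apply Proposition~\ref{prop:euler_maclaurin} to the auxiliary real-variable function $g(t) := f(tz)$ for a fixed nonzero $z \in D_\theta$, and then pass to the limit $M_2 \to \infty$, justifying a contour deformation for the resulting main integral along the way. Since $tz \in D_\theta \subset \mathcal{D}$ for every $t \geq 0$, the function $g$ is smooth on $[0,\infty)$ with derivatives $g^{(m)}(t) = z^m f^{(m)}(tz)$. Taking $M_1 = 0$ and any $M_2 \in \IN$ in Proposition~\ref{prop:euler_maclaurin} produces
\begin{align*}
\sum_{m=0}^{M_2-1} f((m+a)z)
&= \int_0^{M_2} f(xz) \, dx - \sum_{m=0}^{L-1} \frac{B_{m+1}(a)}{(m+1)!} z^m \lp f^{(m)}(0) - f^{(m)}(M_2 z)\rp \\
&\quad + \frac{(-1)^{L+1} z^L}{L!} \int_0^{M_2} \wt{B}_L(x-a) f^{(L)}(xz) \, dx.
\end{align*}

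Next I would pass to the limit $M_2 \to \infty$ term by term. The left-hand side converges to $\sum_{m \geq 0} f((m+a)z)$, and the boundary contributions $f^{(m)}(M_2 z)$ vanish by the decay hypothesis $f^{(m)}(w) \ll |w|^{-1-\e}$. Similarly, the error integral $\int_0^\infty \wt{B}_L(x-a) f^{(L)}(xz) \, dx$ converges absolutely, since $\wt{B}_L$ is bounded on $\IR$, $f^{(L)}$ is bounded on compact subsets of $\mathcal{D}$, and $|f^{(L)}(xz)| \ll (x|z|)^{-1-\e}$ as $x \to \infty$.

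The main obstacle is to identify $\lim_{M_2 \to \infty} \int_0^{M_2} f(xz)\, dx$ with $\frac{1}{z} \int_0^\infty f(x)\, dx$. Writing $z = |z| e^{i\alpha}$ with $|\alpha| \leq \theta < \frac{\pi}{2}$ and substituting $w = xz$ recasts the integral as $\frac{1}{z} \int_{\gamma_{M_2}} f(w)\, dw$, where $\gamma_{M_2}$ is the straight segment from $0$ to $M_2 z$. Let $C_{M_2}$ denote the circular arc of radius $M_2 |z|$ joining $M_2 |z|$ to $M_2 z$. Together with the real segment $[0, M_2|z|]$, the paths $\gamma_{M_2}$ and $C_{M_2}$ bound a sector contained in $D_\theta \subset \mathcal{D}$ on which $f$ is holomorphic, so Cauchy's theorem forces the three contour integrals to match. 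The arc contribution is bounded by $|\alpha| \cdot M_2 |z| \cdot \max_{w \in C_{M_2}} |f(w)| \ll (M_2 |z|)^{-\e} \to 0$ as $M_2 \to \infty$, so $\int_{\gamma_{M_2}} f(w) \, dw \to \int_0^\infty f(x) \, dx$. Combining the four limits yields the claimed identity.
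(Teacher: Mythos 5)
Your proof is correct and follows exactly the route the paper sketches: apply Proposition~\ref{prop:euler_maclaurin} to $x \mapsto f(xz)$, let $M_2 \to \infty$ using the decay hypothesis on $f^{(m)}$, and rotate the main integral back to the positive real axis via Cauchy's theorem on the sector, with the arc contribution vanishing like $(M_2|z|)^{-\e}$. The paper leaves all of these details to the reader, and your write-up supplies them correctly.
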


\subsection{An integral appearing in Mordell-type representations of modular transforms}
The modular transformations of the false-indefinite component of our generating function can be represented as Mordell-type integrals, which in turn are crucial for the asymptotic analysis here. In this section, we describe the technical features of an integral that appears in this analysis.

\begin{lem}\label{lem:pv_integral_holomorphy_bound}
Let $w \in \IC$ with $w_1 > 0$ and $\a \in \IR \setminus \{0\}$. Then
$w\mapsto \mathrm{PV}\int_{0}^\infty \frac{x  e^{-2 \pi w x}}{\a (x-\a)}  dx$
is holomorphic. Moreover, we have the bound
\begin{equation*}
\left| \mathrm{PV}\int_{0}^\infty	\frac{x  e^{-2 \pi w x}}{\a (x-\a)}  dx  \right|
\leq \pi e^{-2\pi \a w_1} \d_{\a > 0} + \frac{1}{\a^2 w_1^2} .
\end{equation*}
\end{lem}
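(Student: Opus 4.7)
The plan is to split on the sign of $\alpha$. For $\alpha<0$ the integrand is regular on $[0,\infty)$ and both holomorphy and the bound are direct; for $\alpha>0$ I will realize the principal value as a fixed finite-radius contour integral that isolates the residue contribution, giving the $\pi e^{-2\pi\alpha w_1}$ term, while the deformed contour integral absorbs into $\frac{1}{\alpha^2 w_1^2}$.

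First, for $\alpha<0$ I would use $|x-\alpha|=x+|\alpha|\geq|\alpha|$ to dominate the integrand locally uniformly in $w$ by $x e^{-2\pi w_1 x}/\alpha^2$, so holomorphy follows from Morera/dominated convergence and
\begin{equation*}
\left|\int_0^\infty\!\frac{xe^{-2\pi w x}}{\alpha(x-\alpha)}\,dx\right|
\leq \frac{1}{\alpha^2}\int_0^\infty\! xe^{-2\pi w_1 x}\,dx
=\frac{1}{4\pi^2\alpha^2 w_1^2}\leq \frac{1}{\alpha^2 w_1^2}.
\end{equation*}
Next, for $\alpha>0$ I would introduce the contour $C$ obtained from $[0,\infty)$ by replacing the subinterval $[\alpha/2,3\alpha/2]$ with the lower semicircle $S$ of radius $\alpha/2$ centered at $\alpha$. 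Since $C$ avoids the pole, Cauchy's theorem makes $\int_C$ independent of the radius (as long as the semicircle stays in $(0,\infty)$), and comparison with the vanishing-radius limit that defines the PV yields
\begin{equation*}
\mathrm{PV}\int_0^\infty\!\frac{xe^{-2\pi w x}}{\alpha(x-\alpha)}\,dx
=\int_C\!\frac{xe^{-2\pi w x}}{\alpha(x-\alpha)}\,dx - i\pi\,e^{-2\pi w\alpha},
\end{equation*}
with the residue of the integrand at $x=\alpha$ equal to $e^{-2\pi w\alpha}$. Holomorphy of $\int_C$ in $w$ is standard (the contour no longer touches the singularity), and the residue accounts for $\pi e^{-2\pi\alpha w_1}$ in the bound.

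The remaining task is to estimate $\int_C$. On $[0,\alpha/2]\cup[3\alpha/2,\infty)$ the inequality $|x-\alpha|\geq\alpha/2$ gives a contribution at most $\frac{2}{\alpha^2}\int_0^\infty x e^{-2\pi w_1 x}\,dx=\frac{1}{2\pi^2\alpha^2 w_1^2}$ from each segment, totalling $\frac{1}{\pi^2\alpha^2 w_1^2}$. On $S$ one has $|x-\alpha|=\alpha/2$, $|x|\leq 3\alpha/2$, and $\mathrm{Re}(x)\geq\alpha/2$ (so $|e^{-2\pi w x}|\leq e^{-\pi w_1\alpha}$), giving $|\int_S|\leq\frac{3\pi}{2}e^{-\pi w_1\alpha}$. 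The claimed bound then reduces to checking $\frac{3\pi}{2}x^2 e^{-\pi x}\leq 1-\pi^{-2}$ for $x=\alpha w_1\geq 0$; since $x^2 e^{-\pi x}$ attains its maximum $\frac{4}{\pi^2 e^2}$ at $x=2/\pi$, the left-hand side is bounded by $\frac{6}{\pi e^2}\approx 0.26<0.90\approx 1-\pi^{-2}$, closing the argument comfortably. I expect the main subtlety to be the choice of detour radius: it must be independent of $w$ yet small enough that the semicircular contribution absorbs into $\frac{1}{\alpha^2 w_1^2}$ rather than into the residue term; radius $\alpha/2$ is the natural compromise that makes both bounds work uniformly in $w$ with $w_1>0$.
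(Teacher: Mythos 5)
The paper does not prove this lemma itself; it defers entirely to \cite[Subsection 4.2, proof of Theorem 4.9]{BCN}, so your self-contained contour argument is necessarily a different route, and its overall architecture (regular integrand for $\a<0$; for $\a>0$, a fixed semicircular detour identifying $\mathrm{PV}=\int_C\mp i\pi e^{-2\pi w\a}$, with the half-residue producing the $\pi e^{-2\pi\a w_1}$ term) is sound. The $\a<0$ case, the PV--contour identity, the estimates on the two real segments, and the closing numerical inequality $\frac{3\pi}{2}y^2e^{-\pi y}\le 1-\pi^{-2}$ all check out.

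There is, however, one genuine gap: your estimate on the semicircle $S$. You bound $|e^{-2\pi wx}|\le e^{-\pi w_1\a}$ using only $\IRe(x)\ge\frac{\a}{2}$, but for $x$ off the real axis one has $|e^{-2\pi wx}|=e^{-2\pi(w_1\IRe(x)-w_2\Im(x))}$, and on the \emph{lower} semicircle $\Im(x)$ ranges over $[-\frac{\a}{2},0]$. If $w_2<0$ the term $-w_2\Im(x)$ is negative and of size up to $|w_2|\frac{\a}{2}$, so $|e^{-2\pi wx}|$ can be as large as $e^{\pi|w_2|\a}e^{-\pi w_1\a}$; your bound on $\int_S$ therefore fails for $w$ with large negative imaginary part, i.e.\ on half of the domain $w_1>0$. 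Note the lemma is applied in the paper with $w$ essentially equal to $\frac{1}{z}$ or $\frac{k}{Rt}\cdot$(complex quantities) ranging over genuinely complex values, so this is not a cosmetic issue. The fix is short: either choose the detour semicircle in the half-plane where $w_2\Im(x)\le 0$ (lower semicircle for $w_2\ge 0$, upper for $w_2\le 0$ --- the half-residue changes sign but not modulus, so the final bound is unaffected), or observe that the PV integral at $\bar w$ is the complex conjugate of the one at $w$, so it suffices to prove the bound for $w_2\ge 0$. Either one-line addition closes the argument; as written, the semicircular estimate is incorrect for $w_2<0$.
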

\begin{proof}
The result follows from Subsection 4.2 of \cite{BCN} (see in particular the proof of Theorem 4.9).
\end{proof}

\subsection{Bessel functions}
We employ the $I$-Bessel functions $I_\kappa$ of order $\kappa > 0$ and their derivatives with respect to order.
By 8.406.1 and 8.412.2 of \cite{GR}, for $x > 0$, we have 
\begin{equation}\label{eq:I_Bessel_Hankel_integral}
I_\k (x) = \frac{1}{2\pi i} \int_{\mathcal{C}} z^{-\k-1} e^{\frac{x}{2} \lp z + \frac{1}{z} \rp} dz,
\end{equation}
with $\mathcal{C}$ denoting Hankel's contour starting from $- \infty$ below the real line, winding around the origin, and then going to $- \infty$ above the real line.
Since $\k > 0$, the integration path can be deformed to $c + i \IR$ for any $c > 0$. The derivative of $I_\kappa$ with respect to the order is then given by
\begin{equation}\label{eq:I_Bessel_derivative_c_infty_integral}
\mathbb I_\k (x) 
:= \frac{\partial}{\partial\kappa}I_\kappa(x) 
= - \frac{1}{2\pi i} \int_{c-i\infty}^{c+i\infty} z^{-\k-1} \Log (z) e^{\frac{x}{2} \lp z + \frac{1}{z} \rp} dz .
\end{equation}
The $I$-Bessel functions of half integral order can be expressed in terms of elementary functions. For example, we have
\begin{equation}\label{eq:I_Bessel_half_integral_examples}
I_{\frac{1}{2}} (x) = \sqrt{\frac{2}{\pi x}} \sinh (x)
\andd
I_{\frac{3}{2}} (x)= \sqrt{\frac{2}{\pi x}} \lp \cosh (x) - \frac{\sinh (x)}{x} \rp.
\end{equation}
By 10.38.6 of \cite{NIST}, we have
\begin{equation}\label{eq:I_Bessel_order_der_1_2_example}
\mathbb{I}_{\frac{1}{2}}(x) = - \frac{1}{\sqrt{2\pi x}}
\lp\CE (2x) e^x  + \CF (2x) e^{-x} \rp,
\end{equation}
with $\CE (x)$ as in \eqref{eq:leading_exp_term_E_Psi_definitions} and where $\CF (x) :=\mathrm{E}_1 (x) e^{x} $ with $\mathrm{E}_1 (x) := \int_x^\infty \frac{e^{-t}}{t} dt$ for $x>0$ denoting the respective exponential integral. 
Note that by 6.12.1 and 6.12.2 of \cite{NIST} we have
\begin{equation}\label{eq:exponential_integral_term_asymptotics}
\CE (x) \sim \frac{1}{x} \lp 1 + \frac{1!}{x} + \frac{2!}{x^2} + \ldots \rp
\andd
\CF (x) \sim \frac{1}{x} \lp 1 - \frac{1!}{x} + \frac{2!}{x^2} - \ldots \rp
\quad 
\mbox{as } x \to \infty.
\end{equation}

\section{Behavior near Rationals}\label{sec:modular_transformations}
To determine the asymptotics of $\calT_{R,r}(n)$ we investigate $F_{R,r}$  near rationals and determine its behavior under modular transformations. More specifically,
for $h, k \in \IZ$ with $0\leq h<k$ and $\gcd (h,k) =1$, we study
\begin{equation}\label{eq:FNr_rewriting_euler_maclaurin_prep}
F_{R,r} \!\lp \frac{h}{k} +  \frac{iz}{k^2} \rp\! 
=
\sum_{\ell =0}^{k-1}
\sum_{m \geq 0}
\frac{1}{\z_k^{-h(R\ell+r)} e^{\frac{2 \pi R}{k}  \lp m + \frac{R\ell+r}{Rk} \rp z}\!-\!1}.
\end{equation}

\subsection{Technical preliminaries}
We first examine the sum over $m$ in~\eqref{eq:FNr_rewriting_euler_maclaurin_prep}. Here and throughout, 
\begin{equation*}
(\!(x)\!) :=
\begin{cases}
x - \lfloor x\rfloor - \frac12 & \text{if $x\in\R\setminus\Z$},\\
0 & \text{if $x\in\IZ$}.
\end{cases}
\end{equation*}

\begin{prop}\label{prop:euler_maclaurin_final_result}
For $k \in \IN$, $\nu \in \IZ$, $\a \in \IQ \cap (0,1]$ with $\a \neq 1$ if $k \nmid \nu$, and $t > 0$, we have 
\begin{align*}
&\sum_{m\geq0} \frac{1}{\zeta_{k}^{-\nu}e^{(m+\a)t}-1}
\\
&\ \ 
=- \frac{\d_{k \nmid \nu}}{t} \!
\lp\! \pi i \left(\!\!\left(\frac{\nu}{k}\right)\!\!\right)
+ \frac{1}{2} \log \!\lp 4 \sin^2 \!\lp \frac{\pi \nu}{k}  \rp \!\rp \!\rp
- \!\frac{\d_{k \mid \nu}}{t} \! \lp \log (t) + \psi (\a) \rp
+ \frac{1}{2} \!\lp\! \a - \frac{1}{2} \rp \!
\lp 1 -i \d_{k \nmid \nu} \cot \!\lp \frac{\pi \nu}{k} \rp\! \rp
\\
& \hspace{23pt}
+ \frac{1}{t} \!\!\sum_{\substack{{\bm n} \in \IZ^2 + \left(0, \frac{\nu}{k}\right) \\n_1,n_2 \neq 0}}
e^{-2 \pi i \a n_1}
\lp \frac{1}{ n_1n_2} \mathrm{PV}\int_{0}^\infty	\frac{x  e^{-\frac{4  \pi^2x}{t}}}{x-n_1n_2}  dx
+\frac{\pi i}{2} \lp \sgn (n_1) + \sgn (n_2) \rp  e^{-\frac{4  \pi^2 n_1 n_2}{t}}
\rp.
\end{align*}
\end{prop}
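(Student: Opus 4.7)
The plan is to apply the shifted Euler--Maclaurin formula (Corollary~\ref{cor:shifted_euler_maclaurin_complex}) to $f(y):=\tfrac{1}{ce^y-1}$ with $c:=\zeta_k^{-\nu}$, splitting into the cases $k\nmid\nu$ (so $c\ne 1$ and $f$ is holomorphic on a neighborhood of $[0,\infty)$) and $k\mid\nu$ (so $c=1$ and $f$ has a simple pole at $0$). In both cases the Mordell-type double sum in the statement arises from the Euler--Maclaurin remainder.

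For $k\nmid\nu$, Corollary~\ref{cor:shifted_euler_maclaurin_complex} with $z=t$, $a=\alpha$, $L=1$ yields three pieces. The integral term $\tfrac{1}{t}\int_0^\infty f(x)\,dx=-\tfrac{1}{t}\Log(1-c^{-1})$, via the factorization $1-\zeta_k^\nu=-2i\sin(\pi\nu/k)e^{i\pi\nu/k}$, produces $-\tfrac{1}{t}\bigl(\pi i(\!(\nu/k)\!)+\tfrac12\log(4\sin^2(\pi\nu/k))\bigr)$. The boundary correction $-B_1(\alpha)f(0)=-(\alpha-\tfrac12)/(c-1)$ combines with the elementary identity $\tfrac{1}{c-1}=-\tfrac12+\tfrac{i}{2}\cot(\pi\nu/k)$ to give $\tfrac12(\alpha-\tfrac12)(1-i\cot(\pi\nu/k))$. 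These are the first two lines of the claimed formula; what remains is to recast $t\int_0^\infty \widetilde B_1(x-\alpha)f'(xt)\,dx$ as the Mordell double sum.

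To handle this remainder, I would insert the Fourier expansion $\widetilde B_1(x-\alpha)=-\sum_{m\ne 0}\tfrac{e^{2\pi im(x-\alpha)}}{2\pi im}$, substitute $u=xt$, and then use the Mittag-Leffler expansion $f(u)=-\tfrac12+\sum_{n_2\in\IZ+\nu/k}\tfrac{1}{u-2\pi in_2}$ (interpreted as a symmetric partial sum) to reduce each $m$-term to integrals of the form $\int_0^\infty e^{2\pi imu/t}(u-2\pi in_2)^{-2}\,du$. A single integration by parts converts each double pole to a simple one; rotating the resulting contour onto the imaginary axis misses the pole when $mn_2<0$ and picks up a half-residue of size $\sgn(mn_2)\,\pi i\,e^{-4\pi^2 mn_2/t}$ when $mn_2>0$. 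After the substitution $w=4\pi^2 x/t$ and the elementary identity $\tfrac{1}{x-a}=\tfrac{x}{a(x-a)}-\tfrac{1}{a}$ (which introduces the weight $x$ in the kernel), the $-\tfrac{1}{a}$ piece cancels the boundary term from the IBP exactly, and one lands on the claimed Mordell kernel $\tfrac{1}{n_1n_2}\mathrm{PV}\int_0^\infty \tfrac{xe^{-4\pi^2 x/t}}{x-n_1n_2}dx$ together with the residue factor $\tfrac{\pi i}{2}(\sgn(n_1)+\sgn(n_2))$, upon setting $n_1:=m$. The case $k\mid\nu$ is reduced to the above by applying the same argument to the function $f(y)-\tfrac{e^{-y}}{y}$ (which is holomorphic at $0$) and evaluating the added-back series $\sum_{m\ge 0}\tfrac{e^{-(m+\alpha)t}}{(m+\alpha)t}$ via the integral representation $\psi(\alpha)=-\gamma+\int_0^\infty\bigl(\tfrac{e^{-x}}{x}-\tfrac{e^{-\alpha x}}{1-e^{-x}}\bigr)dx$, which supplies the $-\tfrac{1}{t}(\log t+\psi(\alpha))$ contribution.

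The main obstacle is the contour-deformation bookkeeping in the previous paragraph: the poles of $f'$ all lie \emph{on} the imaginary axis, so the rotation from $[0,\infty)$ to $i[0,\infty)$ must be performed as a principal value and with careful tracking of the indentation direction around each pole to extract the exact sign combination $\sgn(n_1)+\sgn(n_2)$. A secondary issue is that the Mittag-Leffler series for $f$ is only conditionally convergent, so the rotation must be done at the level of symmetric partial sums, passing to the limit only after the principal-value and residue parts have been separated; absolute convergence of the final double sum then follows from the estimate in Lemma~\ref{lem:pv_integral_holomorphy_bound}.
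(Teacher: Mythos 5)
Your strategy is essentially the paper's: Euler--Maclaurin to peel off the integral and boundary terms (which give the $\Log$, $\left(\!\left(\nu/k\right)\!\right)$, and $\tfrac12(\alpha-\tfrac12)(1-i\cot(\pi\nu/k))$ pieces exactly as you compute them), followed by a partial-fraction expansion of $w\mapsto(\zeta_k^{-\nu}e^{w}-1)^{-1}$ and a contour rotation onto the imaginary axis that separates principal-value integrals from half-residues, with the splitting $\frac{1}{x-a}=\frac{x}{a(x-a)}-\frac1a$ producing the weighted Mordell kernel. One execution difference is worth noting: you stop Euler--Maclaurin at $L=1$ and insert the Fourier series of $\widetilde B_1$, which converges only conditionally, so the sum--integral interchange needs justification beyond dominated convergence. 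The paper deliberately takes $L=2$ (the Fourier series of $\widetilde B_2$ converges absolutely, making the interchange immediate) and then integrates by parts twice to return to cosine integrals against $f$ itself; your version is repairable by symmetric partial sums, but this is precisely the kind of step your final paragraph worries about, and the $L=2$ detour is the cleaner fix.

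The one place where the sketch does not close as written is the reduction of the case $k\mid\nu$. The added-back series $\sum_{m\ge0}\frac{e^{-(m+\alpha)t}}{(m+\alpha)t}$ is \emph{not} equal to $-\frac1t(\log t+\psi(\alpha))$: already for $\alpha=1$ it equals $-\frac1t\log\left(1-e^{-t}\right)=-\frac{\log t}{t}+\frac12-\frac{t}{24}+\cdots$, so it carries an exact, $t$-dependent remainder that must be tracked and recombined with the output of Euler--Maclaurin applied to $f(y)-\frac{e^{-y}}{y}$ (whose Mittag--Leffler expansion likewise acquires the extra non-decaying term $\frac{1-e^{-y}}{y}$). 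Since the proposition is an exact identity, these leftovers must be shown to merge into the Mordell double sum and the $\tfrac12(\alpha-\tfrac12)$ term; asserting that the integral representation of $\psi$ ``supplies'' the $-\frac1t(\log t+\psi(\alpha))$ contribution elides this. The paper avoids the issue by subtracting the exact pole $\frac{1}{tx}$ rather than $\frac{e^{-x}}{x}$ --- then the partial fractions of $\frac{1}{e^{tx}-1}-\frac{1}{tx}$ are precisely $-\frac12+\sum_{n\neq0}(tx-2\pi in)^{-1}$, matching the restriction $n_2\neq0$ in the final double sum --- and extracts $\psi(\alpha)$ from the finite-$M$ form of Proposition~\ref{prop:euler_maclaurin} via $\sum_{m=0}^{M-1}\frac{1}{m+\alpha}=\log M-\psi(\alpha)+o(1)$. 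You should either adopt that regularization or carry out the exact (not merely asymptotic) evaluation of your Lerch-type series and verify the cancellation.
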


\noindent We prove this result through a number of technical lemmas.
\begin{lem}\label{lem:Euler_Maclaurin_first_case}
For $k \in \IN$, $\nu \in \IZ$ with $k \nmid \nu$, $\alpha\in\Q\cap(0,1)$, and $t > 0$, we have
\begin{equation*}
\sum_{m\geq0} \frac{1}{\zeta_{k}^{-\nu}e^{(m+\a)t}-1}
= - \left(\!\!\left(\frac{\nu}{k}\right)\!\!\right) \frac{\pi i}{t}
- \log \!\lp 4 \sin^2 \!\lp \frac{\pi \nu}{k}  \rp \!\rp \frac{1}{2t}
+  2\sum_{m\ge1}\int_0^\infty \frac{\cos\left(2\pi m\left(x-\a\right)\!\right)}{\zeta_{k}^{-\nu}e^{tx}-1}  dx.
\end{equation*}
\end{lem}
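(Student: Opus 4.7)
The plan is to apply the shifted Euler--Maclaurin formula from Proposition~\ref{prop:euler_maclaurin} with $L=1$ to $f(x):=(\zeta_k^{-\nu}e^{tx}-1)^{-1}$ on $[0,M_2]$ and then let $M_2\to\infty$. Since $k\nmid\nu$ forces $\zeta_k^{-\nu}\neq 1$, both $f$ and $f'$ decay exponentially on $[0,\infty)$, so the boundary values at $M_2$ and the tail of the remainder integral vanish in the limit, yielding
\[
\sum_{m\geq0}\frac{1}{\zeta_k^{-\nu}e^{(m+\a)t}-1}
= \int_0^\infty f(x)\,dx - B_1(\a)f(0) + \int_0^\infty \wt{B}_1(x-\a)f'(x)\,dx.
\]

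For the main integral, I would use the substitution $u=e^{-tx}$ to obtain $\int_0^\infty f(x)\,dx = \frac{1}{t}\int_0^1 \frac{du}{\zeta_k^{-\nu}-u} = -\frac{1}{t}\log(1-\zeta_k^\nu)$. Factoring $1-\zeta_k^\nu = -2i e^{i\pi\nu/k}\sin(\pi\nu/k)$ and splitting the principal logarithm into modulus and argument gives
\[
-\frac{1}{t}\log(1-\zeta_k^\nu) = -\frac{\pi i}{t}\left(\!\!\left(\frac{\nu}{k}\right)\!\!\right) - \frac{1}{2t}\log\!\left(4\sin^2\!\left(\frac{\pi\nu}{k}\right)\!\right),
\]
which is verified by direct inspection for $\nu/k\in(0,1)$ and extended to all $\nu\not\equiv 0\pmod k$ by joint periodicity in $\nu$ of both sides. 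This produces the first two terms of the claim.

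Next I would substitute the Fourier series $\wt{B}_1(x-\a) = -\frac{1}{\pi}\sum_{m\geq1}\frac{\sin(2\pi m(x-\a))}{m}$ from~\eqref{eq:Bernoulli_polynomial_identity} with $\ell=1$, interchange summation and integration, and integrate by parts. The boundary term at $x=0$ contributes $\sin(2\pi m\a)f(0)$ and the bulk contributes $-2\pi m\int_0^\infty\cos(2\pi m(x-\a))f(x)\,dx$. Summing the boundary pieces via $\sum_{m\geq1}\frac{\sin(2\pi m\a)}{m}=-\pi(\a-\tfrac12)$ (valid for $\a\in(0,1)$) gives precisely $B_1(\a)f(0)$, which cancels the $-B_1(\a)f(0)$ term from Euler--Maclaurin and leaves the claimed cosine series.

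The one delicate point, rather than a true obstacle, is the interchange of summation and integration: the Fourier series for $\wt{B}_1$ converges only conditionally, but its partial sums are uniformly bounded (Dirichlet's test), so dominated convergence applies against the $L^1$ integrand $f'$. All remaining manipulations are straightforward computations.
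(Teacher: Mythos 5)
Your proof is correct and follows essentially the same route as the paper: the shifted Euler--Maclaurin formula, the exact evaluation of $\int_0^\infty f(x)\,dx$ giving the $\left(\!\left(\frac{\nu}{k}\right)\!\right)$ and logarithm terms, and the Fourier expansion of the periodic Bernoulli polynomial followed by integration by parts, with the boundary terms cancelling the $B_1(\a)f(0)$ contribution. The only difference is that the paper takes $L=2$, so the Fourier series of $\wt{B}_2$ converges absolutely and the interchange of sum and integral is immediate (at the cost of a second integration by parts), whereas your $L=1$ version needs --- and you correctly supply --- the bounded-partial-sums/dominated-convergence argument for the conditionally convergent series of $\wt{B}_1$.
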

\begin{proof}
Letting $f_{k,\nu} (w) := \frac{1}{\z_k^{-\nu} e^w -1}$, we have, by Corollary~\ref{cor:shifted_euler_maclaurin_complex},
\begin{equation*}
\sum_{m\geq0} f_{k,\nu} ((m+\alpha)t)
 =  \frac{1}{t} \int_{0}^{\infty}\!\!\!\! f_{k,\nu}(x) dx- B_1\!\left(\alpha\right)\! f_{k,\nu}(0)
- B_2\!\left(\alpha\right) \! f_{k,\nu}'(0) \frac{t}{2}
-\frac{t^2}{2} \int_0^\infty\!\!\!\! \wt{B}_2\!\left(x-\alpha\right) f_{k,\nu}^{(2)} (tx) dx.
\end{equation*}
We first compute
\begin{equation*}
\int_{0}^{\infty}\! f_{k,\nu}(x)dx  
= -\pi i \left(\!\!\left(\frac{\nu}{k}\right)\!\!\right)-\frac{1}{2} \log \!\lp 4 \sin^2 \!\lp \frac{\pi \nu}{k}  \rp \!\rp.
\end{equation*}
For the second integral, we use \eqref{eq:Bernoulli_polynomial_identity} and interchange the order of integral and sum using its absolute convergence to get
\begin{equation*}
\int_0^\infty \widetilde{B}_2\!\left(x-\alpha\right) f_{k,\nu}^{(2)}(tx) dx 
= \frac1{\pi^2} \sum_{m\ge1} \frac1{m^2} \int_0^\infty 
\cos\!\left( 2\pi m \!\left(x-\alpha\right)\right) f_{k,\nu}^{(2)}(tx) dx.
\end{equation*}
Now we use integration by parts twice with the boundary terms canceling the terms with $B_1 (\a)$ and $B_2 (\a)$ above by \eqref{eq:Bernoulli_polynomial_identity} and obtain the lemma.
\end{proof}

We next extend Lemma~\ref{lem:Euler_Maclaurin_first_case} to the case $k \mid \nu$.
\begin{lem}\label{lem:Euler_Maclaurin_second_case}
For $t>0$ and $\a \in \IQ\cap(0,1]$ we have
\begin{equation*}
\sum_{m\geq 0}\frac{1}{e^{(m+\alpha)t}-1} =
- \frac{\log (t)}{t} - \frac{\psi (\a)}{t} + \frac{\d_{\a = 1}}{4}
+ 2 \sum_{m \geq 1}
\int_0^\infty \left(\frac1{e^{tx}-1}-\frac{1}{tx}\right) \cos\!\left(2\pi m\left(x-\alpha\right)\right) dx .
\end{equation*}
\end{lem}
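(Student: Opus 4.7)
The strategy mirrors the proof of Lemma~\ref{lem:Euler_Maclaurin_first_case}, with an additional regularization to handle the pole of $\frac{1}{e^w-1}$ at $w = 0$ that is absent in the $k \nmid \nu$ case (where $\zeta_k^{-\nu} \neq 1$). Rather than applying Corollary~\ref{cor:shifted_euler_maclaurin_complex} directly, my plan is to apply the shifted Euler--Maclaurin formula (Proposition~\ref{prop:euler_maclaurin}) to the subtracted function $g(x) := \frac{1}{e^{tx}-1} - \frac{1}{tx}$, which is smooth on $[0,\infty)$ with $g(0) = -\frac{1}{2}$ and $g'(0) = \frac{t}{12}$, and to reintroduce the singular piece via
\[
\sum_{m=0}^{M-1} \frac{1}{e^{(m+\alpha)t}-1} = \sum_{m=0}^{M-1} g(m+\alpha) + \frac{\psi(M+\alpha) - \psi(\alpha)}{t},
\]
before letting $M \to \infty$.

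Applying Proposition~\ref{prop:euler_maclaurin} with $L = 2$ on $[0, M]$, the main integral evaluates in closed form through the antiderivative $\int \bigl(\frac{1}{e^{tx}-1} - \frac{1}{tx}\bigr) dx = \frac{1}{t}\log(1 - e^{-tx}) - \frac{1}{t}\log(x) + C$, giving $\int_0^M g(x)\,dx = \frac{1}{t}\bigl(\log(1-e^{-tM}) - \log(tM)\bigr)$. Combined with the asymptotic $\psi(M+\alpha) = \log M + O(M^{-1})$, the divergent $\frac{\log M}{t}$ contributions cancel as $M \to \infty$ and produce the first two terms $-\frac{\log t}{t} - \frac{\psi(\alpha)}{t}$ of the asserted formula. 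The Euler--Maclaurin boundary terms at $M$ vanish in the limit, since $g^{(n)}(M) = O(M^{-n-1})$.

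To recover the cosine sum, the remainder $-\frac{1}{2}\int_0^M \widetilde{B}_2(x - \alpha)\, g''(x)\, dx$ is expanded via the Fourier identity~\eqref{eq:Bernoulli_polynomial_identity} for $\widetilde{B}_2$ and then integrated by parts twice against each $\cos(2\pi m(x-\alpha))$. For $\alpha \in (0,1)$, the boundary terms at $0$ sum to $\frac{g'(0) B_2(\alpha)}{2} + g(0) B_1(\alpha)$ upon re-summing the resulting Fourier series $\sum_{m \geq 1} \frac{\cos(2\pi m \alpha)}{m^2} = \pi^2 B_2(\alpha)$ and $\sum_{m \geq 1} \frac{\sin(2\pi m \alpha)}{m} = -\pi B_1(\alpha)$ (both instances of~\eqref{eq:Bernoulli_polynomial_identity}). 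These cancel exactly against the boundary correction $-B_1(\alpha) g(0) - \frac{B_2(\alpha)}{2} g'(0)$ supplied by Proposition~\ref{prop:euler_maclaurin}, leaving only the desired $2\sum_{m \geq 1}\int_0^\infty g(x)\cos(2\pi m (x - \alpha))\, dx$.

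The main obstacle is the endpoint $\alpha = 1$, where~\eqref{eq:Bernoulli_polynomial_identity} for $\ell = 1$ is not applicable. There $\sum_{m\geq 1}\frac{\sin(2\pi m)}{m} = 0$, since the sawtooth Fourier series vanishes at integers, whereas $-\pi B_1(1) = -\frac{\pi}{2}$; consequently the cancellation in the previous paragraph fails by precisely $-B_1(1) g(0) = \frac{1}{4}$, producing the $\frac{\delta_{\alpha=1}}{4}$ term in the statement. Tracking this jump of $B_1$ at integer arguments, relative to its Fourier series, is the only subtle aspect of the argument; once handled, the identity follows.
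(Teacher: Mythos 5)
Your proposal is correct and follows essentially the same route as the paper: apply Proposition~\ref{prop:euler_maclaurin} with $L=2$ to the subtracted function $\frac{1}{e^{tx}-1}-\frac{1}{tx}$ on $[0,M]$, cancel the divergent $\frac{\log M}{t}$ between $\int_0^M$ and the digamma partial sums, and integrate by parts twice against the Fourier expansion of $\widetilde{B}_2$ as in Lemma~\ref{lem:Euler_Maclaurin_first_case}. Your explicit tracing of the $\frac{\delta_{\alpha=1}}{4}$ term to the failure of the $\ell=1$ Fourier identity \eqref{eq:Bernoulli_polynomial_identity} at integer arguments (the surviving $-B_1(1)g(0)=\frac14$) is exactly right and is a detail the paper's proof leaves implicit.
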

\begin{proof}
We use Proposition~\ref{prop:euler_maclaurin} for $f_t(x):= \frac{1}{e^{tx}-1}-\frac{1}{tx}$ with $M_1 =0$, $M_2 = M \in \IN$, $a=\a$, and $L=2$. The lemma follows by integrating by parts twice as in Lemma~\ref{lem:Euler_Maclaurin_first_case} and 
letting $M \to \infty$ with
\begin{equation*}
\int_0^M f_t(x)dx = - \frac{\log (t)}{t} - \frac{\log (M)}{t} + o(1)
\ \ \mbox{and} \ \ 
\sum_{m=0}^{M-1} \frac{1}{m+\a} = \log (M) - \psi (\a) + o(1)
\quad \mbox{as } M \to \infty
\end{equation*}
for $\a \in \IQ$ with $0 < \a \leq 1$ (see Lehmer's discussion in \cite{Leh}).
\end{proof}

We next rewrite the integrals appearing in Lemmas~\ref{lem:Euler_Maclaurin_first_case} and~\ref{lem:Euler_Maclaurin_second_case} in terms of Mordell-type integrals.
\begin{lem}\label{lem:cos_shifted_integrals}
Let $t>0$.
\begin{enumerate}[label=\rm(\arabic*),leftmargin=*,ref=\rm(\arabic*)]
\item\label{lempart:cos_shifted_integral_with_nu} 
For $k \in \IN$, $\nu \in \IZ$ with $k \nmid \nu$ and $\alpha\in\Q\cap(0,1)$, we have 
\begin{multline*}
\hspace{.25cm}2\sum_{m\ge1}\int_0^\infty \frac{\cos\!\left(2\pi m \!\left(x-\a\right)\!\right)}{\zeta_{k}^{-\nu}e^{tx}-1}  dx
=
\frac{1}{2} \lp \a - \frac{1}{2} \rp \lp 1 -i \cot \!\lp \frac{\pi \nu}{k} \rp \rp
\\
+ \frac{1}{t} \!\sum_{\substack{{\bm n} \in \IZ^2  +\left(0,\frac{\nu}{k}\right)\\n_1\neq 0}} \!
e^{-2 \pi i \a n_1}
\lp \frac{1}{n_1n_2} \mathrm{PV}\int_{0}^\infty	\frac{x  e^{-\frac{4  \pi^2x}{t}}}{x\!-\!n_1n_2}  dx
+\frac{\pi i}{2} \lp \sgn (n_1) + \sgn (n_2) \rp  e^{-\frac{4  \pi^2 n_1 n_2}{t}}
\rp \!.
\end{multline*}

\item\label{lempart:cos_shifted_integral_without_nu} 
For $\alpha\in\Q\cap(0,1]$, we have
\begin{multline*}
\hspace{.2cm}2 \sum_{m \geq 1}
\int_0^\infty \left(\frac1{e^{tx}-1}-\frac{1}{tx}\right) \cos\!\left(2\pi m\left(x-\alpha\right)\right) dx
= \frac{1}{2} \lp \a - \frac{1}{2} \rp - \frac{\d_{\a=1}}{4}
\\
+ \frac{1}{t} \sum_{\substack{n_1,n_2\in\Z\setminus \{0\}}}
e^{-2 \pi i \a n_1}
\lp \frac{1}{n_1n_2} \mathrm{PV}\int_{0}^\infty	\frac{x  e^{-\frac{4  \pi^2x}{t}}}{x-n_1n_2}  dx
+\frac{\pi i}{2} \lp \sgn (n_1) + \sgn (n_2) \rp  e^{-\frac{4  \pi^2 n_1 n_2}{t}}
\rp .
\end{multline*}
\end{enumerate}
\end{lem}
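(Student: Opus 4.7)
The plan is to split $2\cos(2\pi m(x-\a)) = e^{2\pi i m(x-\a)} + e^{-2\pi i m(x-\a)}$ and reduce the lemma to evaluating, for each $m \ge 1$, the one-sided Fourier integrals
\[
K_m^\pm := \int_0^\infty \frac{e^{\pm 2\pi i m x}}{\z_k^{-\n} e^{tx}-1}\,dx.
\]
The integrand has only simple poles at $x = 2\pi i(n+\s)/t$, $n \in \IZ$, where $\s := \n/k - \lfloor \n/k\rfloor \in (0,1)$, all sitting on the imaginary axis. I would rotate the contour of $K_m^+$ up to the positive imaginary axis (where $e^{2\pi i m x}$ decays) and that of $K_m^-$ down to the negative imaginary axis, interpreting the rotated integrals as principal values since the poles now lie on the path. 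Since no other poles are enclosed, the standard half-residue formula yields
\[
K_m^+ = \mathrm{PV}\!\int_0^{i\infty}\!\frac{e^{2\pi i m x}}{\z_k^{-\n}e^{tx}-1}\,dx + \frac{\pi i}{t}\sum_{n\ge 0} e^{-4\pi^2 m(n+\s)/t},
\]
with an analogous identity for $K_m^-$ contributing $-\tfrac{\pi i}{t}\sum_{p\ge 1} e^{-4\pi^2 m(p-\s)/t}$. Multiplication by $e^{\mp 2\pi i\a m}$ and summation over $m \ge 1$ then recasts these residue sums, under the identification of the summation index with a lattice point $(n_1, n_2) \in \IZ \times (\IZ + \s)$ satisfying $\sgn(n_1) = \sgn(n_2)$, as precisely the $\sgn$-portion of the right-hand side.

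To handle the remaining PV integrals along the imaginary axis, I would substitute $x = \pm iy$ and insert the Mittag--Leffler decomposition
\[
\frac{1}{\z_k^{-\n}e^{tx}-1} = -\frac{1}{2} + \psum_{n_2 \in \IZ + \s}\frac{1}{tx - 2\pi i n_2}.
\]
Termwise integration and the rescaling $v = y' t/(4\pi^2)$ produces sums $\tfrac{1}{t}\mathrm{PV}\!\int_0^\infty \frac{e^{-4\pi^2 v/t}}{v - m n_2}\,dv$ together with a contribution $\mp i/(4\pi m)$ from the $-\tfrac{1}{2}$ piece. Applying the algebraic identity $(v-a)^{-1} = a^{-1}(v/(v-a) - 1)$ with $a = m n_2$ converts each integral into the desired form $\mathrm{PV}\!\int_0^\infty \frac{v e^{-4\pi^2 v/t}}{v - m n_2}\,dv$, at the cost of a correction whose $n_2$-sum equals $-\tfrac{1}{4\pi^2 m}\psum_{n_2 \in \IZ+\s}\tfrac{1}{n_2} = -\tfrac{\cot(\pi\s)}{4\pi m}$ by the cotangent partial fraction identity. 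Combining the $\mp i/(4\pi m)$ and $\mp \cot(\pi\n/k)/(4\pi m)$ pieces with $e^{\mp 2\pi i\a m}$, summing on $m\ge 1$, and invoking the classical Fourier series $\sum_{m\ge 1}\tfrac{\sin(2\pi m\a)}{m} = \pi(\tfrac12 - \a)$ (valid for $\a \in (0,1)$) yields exactly the constant $\tfrac12(\a - \tfrac12)(1 - i\cot(\pi\n/k))$, completing Part~(1).

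Part~(2) follows by the same template with two adjustments. After subtracting $1/(tx)$ from the integrand, the Mittag--Leffler expansion runs over $n_2 \in \IZ \setminus \{0\}$, so the PV sum $\psum_{n_2 \ne 0}\tfrac{1}{n_2}$ vanishes by antisymmetry and the cotangent term disappears. At the endpoint $\a = 1$, the Fourier identity $\sum_{m\ge 1}\sin(2\pi m\a)/m = \pi(\tfrac12 - \a)$ has left-hand side $0$ rather than $-\pi/2$; this mismatch accounts for exactly the $-\d_{\a=1}/4$ correction on the right-hand side.

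The principal technical obstacle will be justifying the termwise integration of the Mittag--Leffler sum against $e^{-2\pi m y}$, since the partial-fraction series converges only in the symmetric principal-value sense. I would handle this by performing the $(v-a)^{-1} = a^{-1}(v/(v-a) - 1)$ rewriting \emph{before} interchanging sum and integral: the resulting $v/(v - m n_2)$ lattice sum is absolutely convergent of order $1/n_2^2$, so Fubini applies directly, while the leftover $\psum 1/n_2$ is a classical conditionally convergent series evaluated in closed form. The arc at infinity in the contour rotation is controlled by taking radii bisecting consecutive poles on the imaginary axis, keeping the integrand uniformly bounded there.
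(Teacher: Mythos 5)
Your argument is correct and arrives at the same identity, but it reorganizes the two key operations relative to the paper. The paper first inserts a (paired) partial-fraction expansion of $\frac{1}{\zeta_k^{-\nu}e^{tx}-1}$ into the sine and cosine integrals --- justifying the termwise integration of the conditionally convergent Mittag--Leffler series by a double integration by parts against the functions $g_0(w)=\coth(w)/w$, $g_1$, $g_2$ (this is the content of \eqref{eq:lem_sin_pv_claim2}) --- and only afterwards rotates each resulting one-dimensional integral by $\mp\frac{\pi}{2}$ to produce the principal-value integrals and the half-residue $\sgn$-terms. You instead write $2\cos(2\pi m(x-\a))$ as a sum of two exponentials and rotate the contour of the \emph{full} integrand first, so that the $\frac{\pi i}{2}(\sgn(n_1)+\sgn(n_2))e^{-4\pi^2 n_1n_2/t}$ terms appear at once as half-residues at the poles $x=\frac{2\pi i(n+\nu/k)}{t}$ of the original function, and the Mittag--Leffler expansion is only needed afterwards on the imaginary axis, where the factor $e^{-2\pi my}$ supplies absolute decay. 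This buys a cleaner conceptual separation (residues versus PV integrals) and pushes the conditional-convergence issue into a single, more tractable place; the cost is that you must justify the quarter-circle rotation of $\frac{e^{\pm 2\pi imx}}{\zeta_k^{-\nu}e^{tx}-1}$ itself, which you correctly handle by choosing arc radii bisecting consecutive poles. Your bookkeeping of the constants checks out: the $-\tfrac12$ term of the expansion together with $\sum_{m\ge1}\frac{\sin(2\pi m\a)}{m}=\pi(\tfrac12-\a)$ gives $\tfrac12(\a-\tfrac12)$, the leftover $\psum_{n_2}\frac{1}{n_2}=\pi\cot(\pi\nu/k)$ gives the $-\tfrac{i}{2}(\a-\tfrac12)\cot(\tfrac{\pi\nu}{k})$ piece (and vanishes by antisymmetry in part (2)), and the failure of the sawtooth Fourier series at $\a=1$ accounts exactly for $-\tfrac{\d_{\a=1}}{4}$. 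The one point to phrase carefully in a write-up is the interchange of the symmetric limit $\psum_{n_2}$ with the integral: as literally stated one cannot apply the rewriting $\frac{1}{v-a}=\frac{1}{a}(\frac{v}{v-a}-1)$ ``before interchanging,'' since the terms are not yet separated; the clean fix is to pair the $\pm n$ terms of the Mittag--Leffler series (making the sum absolutely convergent, exactly as the paper's $g_0$ does), apply Fubini, and only then unpair using the algebraic identity and the closed form of $\psum\frac{1}{n_2}$.
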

\begin{proof}
\ref{lempart:cos_shifted_integral_with_nu} We first claim that
\begin{equation}\label{eq:lem_sin_pv_claim2}
\int_0^{\infty} \frac{\sin (2 \pi m x)}{\zeta_{k}^{-\nu}e^{tx}-1} dx
=
-\frac{1}{4 \pi m}+
\sum_{\ell\in \Z}\int_{0}^\infty
\frac{\left(tx-4\pi^2 i m \frac{\nu}{k}\right)\sin(x)}{\left(tx-4\pi^2 i m  \frac{\nu}{k}\right)^2+\left(4\pi^2 m\ell\right)^2} dx.
\end{equation}
Note that the poles of the integrands on the right-hand side are off the real line.
To prove \eqref{eq:lem_sin_pv_claim2}, we integrate by parts twice to find
\begin{multline*}
\int_{0}^\infty
\frac{\left(tx-4\pi^2 i m \frac{\nu}{k}\right)\sin(x)}{\left(tx-4\pi^2 i m  \frac{\nu}{k}\right)^2+\left(4\pi^2 m \ell\right)^2} dx
=
- \frac{i \nu}{4 m k} \frac{1}{\lp \frac{\pi i \nu}{k} \rp^2 + \pi^2 \ell^2}
- \frac{3 i \nu t^2}{32 \pi^2 m^3 k}
\frac{1}{\lp \lp \frac{\pi i \nu}{k} \rp^2 + \pi^2 \ell^2 \rp^2}
\\
+\frac{8 t^2}{(4 \pi m)^6}
\int_0^\infty
\frac{\lp tx-4\pi^2 i m\frac{\nu}{k} \rp \lp \left(3 t^2 - \left(t x-4\pi^2 i m\frac{\nu}{k} \right)^2\right)\sin (x)
- 3 t \left(t x-4\pi^2 i m\frac{\nu}{k} \right)\cos (x) \rp}{\lp   \left(\frac{tx}{4 \pi m}- \frac{\pi i \nu}{k} \right)^2 + \pi^2 \ell^2 \rp^3} dx  .
\end{multline*}
Note that the integrand on the right-hand side is
\smash{$\ll_{m,t,\frac{\nu}{k}} | \ell + \frac{1}{2} |^{-\frac{3}{2}} (x+1)^{-\frac{3}{2}}$}.
So this integral combined with the sum over $\ell \in \IZ$ is absolutely convergent and 
we can switch their order.
Then we can compute the sum over $\ell$ using the function
\begin{equation}\label{eq:g0_definition}
g_0(w) := \frac{\coth(w)}{w} = \sum_{\ell \in \IZ} \frac{1}{w^2+\pi^2 \ell^2},
\end{equation}
which is meromorphic with poles on $\pi i \IZ$, 
and its derivatives 
\begin{equation*}
g_1(w) := \sum_{\ell \in \IZ} \frac{1}{(w^2+\pi^2 \ell^2)^2} 
= - \frac{1}{2w} g_0'(w)
\ \mbox{ and } \ 
g_2(w) := \sum_{\ell \in \IZ} \frac{1}{(w^2+\pi^2 \ell^2)^3} 
= - \frac{1}{4w} g_1'(w).
\end{equation*}
We then revert back the integration by parts done above. Doing this once while noting that $w^2 g_1(w) \to 0$ as $w \to \infty$ yields
\begin{multline*}
\sum_{\ell \in \IZ} \int_{0}^\infty
\frac{\left(tx-4\pi^2 i m \frac{\nu}{k}\right)\sin(x)}{\left(tx-4\pi^2 i m  \frac{\nu}{k}\right)^2+\left(4\pi^2 m \ell\right)^2} dx
=
- \frac{i \nu}{4 m k} g_0 \!\lp \frac{\pi i \nu}{k} \rp
\\
+  \frac{2t}{(4 \pi m)^4} \int_0^\infty \lp tx-4\pi^2 i m\frac{\nu}{k} \rp
\left(- \left(tx-4\pi^2 i m\frac{\nu}{k}\right)\cos(x)+t\sin(x)\right)
 g_1 \!\lp \frac{tx}{4 \pi m}- \frac{\pi i \nu}{k} \rp dx.
\end{multline*}
The next application of integration by parts requires more care because $w g_0 (w) \to 1$ as $w \to \infty$. So we first replace the boundary at infinity by $\frac{\pi}{2} + 2 \pi M m$ with $M \in \IN$. Then integrating by parts, using 
\smash{$w g_0 (w) = 1 + \frac{2}{e^{2w}-1}$}, and finally letting $M \to \infty$ proves \eqref{eq:lem_sin_pv_claim2}.

We next change variables $x \mapsto \frac{4 \pi^2 x}{t}$ on the right-hand side of \eqref{eq:lem_sin_pv_claim2} and split the integral as
\begin{equation*}
\int_{0}^\infty
\frac{\left(tx-4\pi^2 i m \frac{\nu}{k}\right)\sin(x)}{\left(tx-4\pi^2 i m  \frac{\nu}{k}\right)^2+\left(4\pi^2 m \ell\right)^2} dx
=
\frac{1}{2t} \int_0^\infty \frac{\sin \!\lp \frac{4 \pi^2 x}{t} \rp}{x-im \lp \ell+ \frac{\nu}{k} \rp} dx
+
\frac{1}{2t} \int_0^\infty \frac{\sin \!\lp \frac{4 \pi^2 x}{t} \rp}{x+im \lp \ell- \frac{\nu}{k} \rp} dx.
\end{equation*}
Note that, as $|\ell| \to \infty$, we have
\begin{equation*}
\frac{1}{2t} \int_0^\infty \frac{\sin \!\lp \frac{4 \pi^2 x}{t} \rp}{x \mp im \lp \ell \pm \frac{\nu}{k} \rp} dx = \frac{\pm i}{8 \pi^2 m \lp \ell \pm \frac{\nu}{k} \rp} + O_{m,t,\frac{\nu}{k}}\!\left(\ell^{-2}\right).
\end{equation*}
So we can split the contribution of the two terms in the sum on $\ell$ 
if we take the sum symmetrically.\footnote{
Here and throughout, $\sum^*_{\ell \in S}$ means $\lim_{M \to \infty} \sum_{\ell \in S \cap [-M,M]}$.
}
Plugging this into \eqref{eq:lem_sin_pv_claim2}, then yields
\begin{equation*}
\int_0^{\infty} \frac{\sin (2 \pi m x)}{\zeta_{k}^{-\nu}e^{tx}-1} dx
=
-\frac{1}{4 \pi m}+
\frac{1}{t} \psum_{\ell\in \Z} \int_0^\infty \frac{\sin \!\lp \frac{4 \pi^2 x}{t} \rp}{x-im \lp \ell+ \frac{\nu}{k} \rp} dx.
\end{equation*}
Next, we note that by rotating the integration path by $\mp \frac{\pi}{2}$ and picking up the half residue contribution from the simple pole (if present) we have
(for $t>0$, $m \in \IN$, and $\l \in \IQ\setminus\{0\}$)
\begin{equation*}
\int_{0}^\infty	\frac{e^{\pm \frac{4  \pi ^2i x}{t}}}{x - i m \l} dx
= \mathrm{PV}\int_{0}^\infty	\frac{e^{-\frac{4  \pi ^2 x}{t}}}{x \mp m \l} dx + \frac{\pi i}{2} \lp  \sgn (\l) \pm 1 \rp e^{\mp \frac{4 \pi^2m \l}{t} } .
\end{equation*}
This then leads to 
\begin{multline*}
\int_0^\infty \frac{\sin (2\pi m x)}{\zeta_{k}^{-\nu}e^{tx}-1}  dx
=
- \frac{1}{4 \pi m} +
\frac{1}{2it} \ \psum_{\ell \in  \IZ + \frac{\nu}{k}} \left(
\mathrm{PV}\int_{0}^\infty	\frac{e^{-\frac{4  \pi^2x}{t}}}{x-m\ell}  dx
- \mathrm{PV}\int_{0}^\infty	\frac{e^{-\frac{4  \pi^2x}{t}}}{x+m\ell}  dx\right.
\\
\left.\vphantom{\frac{e^{-\frac{4  \pi^2x}{t}}}{x-m\ell}}+ \frac{\pi i}{2} (\sgn (\ell) +1) e^{-\frac{4  \pi^2 m \ell}{t}}
- \frac{\pi i}{2} (\sgn (\ell) -1) e^{\frac{4  \pi^2 m \ell}{t}}
\right).
\end{multline*}
The same arguments with $\sin (2\pi m x)$ replaced by $\cos (2\pi m x)$ yield
\begin{multline*}
\int_0^\infty \frac{\cos\!\left(2\pi m x\right)}{\zeta_{k}^{-\nu}e^{tx}-1}  dx
=
\frac{1}{2t} \sum_{\ell \in  \IZ + \frac{\nu}{k}} \left(
\mathrm{PV}\int_{0}^\infty	\frac{e^{-\frac{4  \pi^2x}{t}}}{x-m\ell}  dx
+ \mathrm{PV}\int_{0}^\infty	\frac{e^{-\frac{4  \pi^2x}{t}}}{x+m\ell}  dx \right.
\\[-.75em]
\left. \vphantom{\frac{e^{-\frac{4  \pi^2x}{t}}}{x-m\ell}}+ \frac{\pi i}{2} (\sgn (\ell) +1) e^{-\frac{4  \pi^2 m \ell}{t}}
+ \frac{\pi i}{2} (\sgn (\ell) -1) e^{\frac{4  \pi^2 m \ell}{t}}
\right).
\end{multline*}
The claim then follows by combining these two results, splitting
\begin{equation*}
\frac{1}{x-m\ell} = \frac{x}{m\ell (x-m\ell)} - \frac{1}{m\ell}
\andd
\frac{1}{x+m\ell} = - \frac{x}{m\ell (x+m\ell)} + \frac{1}{m\ell},
\end{equation*}
while noting the bound in Lemma~\ref{lem:pv_integral_holomorphy_bound} along with \eqref{eq:Bernoulli_polynomial_identity} and \eqref{eq:g0_definition}.

\noindent\ref{lempart:cos_shifted_integral_without_nu} is proved in a similar fashion by establishing first the identities (for $m \in \IN$)
\begin{align*}
\int_0^\infty \left(\frac1{e^{tx}-1}-\frac1{tx}\right) \cos(2\pi mx) dx &= \frac2t \sum_{\ell\ge1} \operatorname{PV} \int_0^\infty \frac{ue^{-\frac{4\pi^2u}t}}{u^2-(m\ell)^2} du,\\
\int_0^\infty \left(\frac1{e^{tx}-1}-\frac1{tx}\right) \sin(2\pi mx) dx &=
- \frac{1}{4 \pi m} + \frac{\pi}{t} \frac{1}{e^{\frac{4 \pi^2 m}{t}} - 1}.
\qedhere
\end{align*}
\end{proof}

The main result of this section now follows directly.
\begin{proof}[Proof of Proposition~\ref{prop:euler_maclaurin_final_result}]
The proposition follows by Lemmas~\ref{lem:Euler_Maclaurin_first_case}, \ref{lem:Euler_Maclaurin_second_case}, and~\ref{lem:cos_shifted_integrals}.
\end{proof}

\subsection{Modular transformation}\label{sec:modular_transformation_explicit}
Our next goal is to determine how the functions $F_{R,r}$ behave near the rational number $\frac{h}{k}$ by examining \eqref{eq:FNr_rewriting_euler_maclaurin_prep}. In other words, we study the behavior of $F_{R,r}$ under the transformation
\begin{equation}\label{eq:M_hk_definition}
M_{h,k} := \mat{h & - \frac{hh'+1}{k} \\ k & -h'} \in \mathrm{SL}_2(\IZ),
\end{equation}
where here and throughout $h' \in \IZ$ with $0 \leq h' < k$ and $hh' \equiv -1 \pmod{k}$. This is the modular transformation that maps $\frac{h'}{k} + \frac{i}{z}$ to $\frac{h}{k} + \frac{iz}{k^2}$. 
We define, for $\bm{\a} \in \IZ^2$,
\begin{align}
A_{R,r,h,k} &:= -\frac{k}{2 \pi R} \sum_{\ell =0}^{k-1} \left(
\pi i \left(\!\!\left(\frac{h (R \ell +r)}{k}\right)\!\!\right)
+ \frac{\d_{k \nmid (R \ell + r)}}{2}
\log \!\lp 4 \sin^2 \!\lp \frac{\pi h (R \ell +r)}{k}  \rp \!\rp\right. \notag
\\ & \left.\hspace{5.5cm}
+ \d_{k \mid (R \ell + r)} \lp \log \!\lp \frac{2 \pi R}{k} \rp +
\psi \!\lp \frac{R \ell + r}{Rk} \rp \rp
\right),
\label{eq:alpha_definition}
\\
B_{R,r,h,k} &:= \frac{1}{2} \sum_{\ell =0}^{k-1}
\lp \frac{R \ell +r}{Rk} - \frac{1}{2} \rp
\lp 1 - i \d_{k \nmid (R \ell + r)} \cot\!\lp \frac{\pi h (R\ell+r)}{k} \rp \rp ,
\label{eq:beta_definition}
\\
C_{R,r,k} &:= \frac{k^2 \d_{\gcd(R,k) \mid r}}{2 \pi \mathrm{lcm} (R,k)},
\label{eq:gamma_definition} \\
\psi_{R,r,h,k} ({\bm \a}) &:= \frac{\zeta_{Rk}^{-h'\alpha_1\alpha_2}}{Rk}\! \sum_{\nu_1, \nu_2 \pmod{k}}\!
e^{\frac{2\pi i}{k} \lp R h \nu_1 \left(\nu_2 + \frac{r}{R}\right) - \a_2 \nu_1 - \a_1 \left(\nu_2 + \frac{r}{R}\right)\rp}.
\label{eq:psi_definition}
\end{align}

\begin{remark}
Recall from \eqref{eq:FRr_indef_falseindef_decomposition} that $F_{R,r}$ is a linear combination of an indefinite and a false-indefinite theta function (up to corrections on the boundary of cones defining these functions).
These theta functions are over a lattice of signature~$(1,1)$ with Gram matrix $A := \pmat{0 & R \\ R & 0}$ (denoting the associated quadratic form by $Q(\bm{n}):=\frac{1}{2} \bm{n}^T A \bm{n}$ and the bilinear form by $B(\bm{n},\bm{m}):= \bm{n}^T A \bm{m}$).
For any two elements of the discriminant group $\bm{\mu}, \bm{\nu} \in A^{-1} \IZ^2 / \IZ^2$, the corresponding \emph{Weil multiplier} is given, for $M := \pmat{a & b \\ c & d} \in \mathrm{SL}_2(\IZ)$, (see, e.g.,~\cite[Section~14]{CS} for further details)
\begin{equation*}
\psi_M(\bm\mu,\bm\nu) :=
\begin{cases}
e^{2 \pi i ab Q(\bm{\mu})} \delta_{\bm{\mu}, \sgn (d) \bm{\nu}} \quad &\mbox{if } c=0,\\[3pt]
\frac{1}{|c|  \sqrt{|\det(A)|}} 
\displaystyle\sum_{\bm{m} \in \Z^2 / c\Z^2}
e^{\frac{2\pi i}{c} \lp a Q(\bm{m}+\bm{\mu}) - B(\bm{m}+\bm{\mu}, \bm{\nu}) + d  Q(\bm{\nu}) \rp}
\quad &\mbox{if } c\neq 0.
\end{cases}
\end{equation*}
Note that $\psi_{R,r,h,k} ({\bm \a})$ in \eqref{eq:psi_definition} is the Weil multiplier $\psi_{M_{h,k}} ( (0, \frac{r}{R}),\frac{{\bm \a}}{R})$ for the modular transformation $M_{h,k}$ from \eqref{eq:M_hk_definition}.
\end{remark}

We are now ready to show Theorem~\ref{thm:modulartransformation}.
\begin{proof}[Proof of Theorem~\ref{thm:modulartransformation}]
By Lemma~\ref{lem:pv_integral_holomorphy_bound}, the summands in \eqref{eq:calI_definition} are holomorphic and they can be bounded as $\ll_{K,R} n_1^{-2} n_2^{-2}$ uniformly for $\t$ in compact subsets $K$ of $\IH$. 
So $\mathcal{I}_{R,{\bm \a},-\frac{d}{c}}$ is a holomorphic function on $\IH$. In particular, this implies that both sides of the claimed identity are holomorphic in $z$ for $\re (z) > 0$. Thus it is sufficient to prove the identity for $z=t>0$.
We go back to \eqref{eq:FNr_rewriting_euler_maclaurin_prep} and use Proposition~\ref{prop:euler_maclaurin_final_result} with $\nu = h(R\ell+r)$, $\a = \frac{R \ell+r}{Rk}$, and $t \mapsto \frac{2 \pi R}{k} t$ to find 
\begin{align*}
&F_{R,r} \!\lp \frac{h}{k} + \frac{it}{k^2} \rp
=
- C_{R,r,k} \frac{\log (t)}{t}
+ \frac{A_{R,r,h,k}}{t} + B_{R,r,h,k}  \\
&\!+ \frac{k}{2\pi Rt}
\sum_{\ell =0}^{k-1}
\sum_{\substack{{\bm n} \in \IZ^2 + \left(0, \frac{h (R\ell +r)}{k}\right) \\ n_1,n_2 \neq 0}} \hspace{-0.4cm}
e^{-\frac{2\pi i(R\ell+r)n_1}{Rk}}\!\!
\left( \!\frac{1}{n_1n_2} \mathrm{PV}\! \! \int_{0}^\infty \! \frac{x  e^{-\frac{2 \pi k x}{Rt}}}{x\!-\!n_1n_2}  dx 
+ \frac{\pi i}{2} ( \sgn (n_1) \!+\! \sgn (n_2) )
e^{-\frac{2 \pi k n_1 n_2}{Rt}}\!
\right)\!.
\end{align*}
The first three terms on the right-hand side match the first three terms on the right-hand side of the claimed modular transformation. Our remaining task is to verify that the term involving~$f$ and~$\mathcal{I}$ equals the final term above, which we write as
\begin{equation}\label{eq:details_multiplier_modular_transf_gdefn2}
\frac{ik}{Rt}
\sum_{\ell =0}^{k-1}
\sum_{\substack{{\bm n} \in \IZ^2 + \left(0, \frac{h (R\ell +r)}{k}\right) \\ n_1,n_2 \neq 0}}
e^{-\frac{2\pi i(R\ell+r)n_1}{Rk}} g({\bm n}),
\end{equation}
where (changing $x \mapsto \frac{R}{k} x$ in the integral)
\begin{equation}\label{eq:details_multiplier_modular_transf_gdefn}
g({\bm n}) := \frac{1}{2 \pi i} \mathrm{PV}\int_{0}^\infty
\frac{x  e^{-\frac{2\pi x}t}}{\frac{k}{R} n_1 n_2 \left(x- \frac{k}{R} n_1n_2\right)}  dx
+\frac{1}{4} \lp \sgn (n_1) + \sgn (n_2) \rp  e^{-\frac{2 \pi k n_1 n_2}{Rt}}.
\end{equation}
Note that the sum in \eqref{eq:details_multiplier_modular_transf_gdefn2} is absolutely convergent by Lemma~\ref{lem:pv_integral_holomorphy_bound}. So we may rewrite \eqref{eq:details_multiplier_modular_transf_gdefn2} as
\begin{align}\label{eq:details_multiplier_modular_transf_with_multiplier_explicit}
&\hspace{-.2cm}\frac{i}{Rt}  \sum_{\nu_1,\nu_2 \pmod{k}}
\sum_{\substack{n_1,n_2\in\Z\setminus\{0\}}}
\zeta_{Rk}^{-(R\nu_2 +r)n_1} \zeta_{k}^{\nu_1\left(h\left(R\nu_2+r\right)-n_2\right)}
g\!\lp n_1,\frac{n_2}{k} \rp
\nonumber\\
&\hspace{.3cm}=\frac{i}{Rt} \sum_{\a_1,\a_2 \pmod{R}}
\sum_{\substack{{\bm n} \in \IZ^2 + \frac{\bm{\a}}{R} \\ n_1,n_2 \neq 0}}
\sum_{\nu_1,\nu_2 \pmod{k}}
e^{\frac{2\pi i}{k} \lp h R \nu_1 \lp \nu_2 +\frac{r}{R} \rp  - Rn_1 \lp \nu_2 +\frac{r}{R} \rp- R n_2 \nu_1- Rh' n_1n_2 \rp}
\nonumber\\[-1.5em]
&\hspace{10cm}\times e^{\frac{2\pi iRh'n_1n_2}{k}} g\!\lp Rn_1,\frac{Rn_2}{k} \rp  ,
\end{align}
changing ${\bm n} \mapsto R{\bm n}$ with ${\bm n} \in \IZ^2$ and $\a_1,\a_2$ running $\!\pmod R$.
The sum on $\nu_1,\nu_2$ depends on 
$\bm{n} \in \IZ^2 +  \frac{\bm{\a}}{R}$ only $\!\!\pmod1$, so it can be written as $Rk \psi_{R,r,h,k} ({\bm \a})$.
Therefore, \eqref{eq:details_multiplier_modular_transf_with_multiplier_explicit} equals
\begin{equation}\label{Verweis}
\frac{ik}{t}  \sum_{\a_1,\a_2 \pmod{R}}  \psi_{R,r,h,k} ({\bm \a})
\sum_{\substack{{\bm n} \in \IZ^2 + \frac{{\bm \a}}{R} \\ n_1,n_2 \neq 0}}
e^{\frac{2\pi iRh'n_1n_2}{k}} g\!\lp Rn_1,\frac{Rn_2}{k} \rp .
\end{equation}
Using \eqref{eq:f_indef_definition}, \eqref{eq:calI_definition}, and \eqref{eq:details_multiplier_modular_transf_gdefn}, the sum over ${\bm n}$ yields 
\smash{$f_{R,{\bm \a}}  (\frac{h'}{k} + \frac{i}{t}) + \mathcal{I}_{R,{\bm \a}\frac{h'}{k}} (\frac{h'}{k} + \frac{i}{t})$}
and \eqref{Verweis} matches the second line of Theorem~\ref{thm:modulartransformation}.
\end{proof}

\section{Bounds on Components of the Transformation}\label{sec:nonprincipal_bounds}
In this section, we bound the constants appearing in Theorem~\ref{thm:modulartransformation} along with nonprincipal parts.
We start by estimating $A_{R,r,h,k}$ and $B_{R,r,h,k}$.

\begin{lem}\label{lem:alpha_beta_bounds}
Let $R$, $k$, $r\in \IN$, $1\le r \le R$, and $h \in \IZ$ with $\gcd (h,k)=1$. Then we have
\begin{equation*}
A_{R,r,h,k} \ll_R k^2 \log(k) + \d_{k=1} \qquad \text{and} \qquad
B_{R,r,h,k} \ll k^2.
\end{equation*}
\end{lem}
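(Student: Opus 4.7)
The strategy is to bound each of the three summand types in $A_{R,r,h,k}$ (as given by \eqref{eq:alpha_definition}) and each of the two types in $B_{R,r,h,k}$ (as given by \eqref{eq:beta_definition}) separately. Throughout, I would exploit that, since $\gcd(h,k)=1$, the map $\ell\mapsto h(R\ell+r)\pmod{k}$ traces out the coset $hr+d\IZ/k\IZ$, where $d:=\gcd(R,k)$, with each residue hit exactly $d\leq R$ times. In particular, the condition $k\mid R\ell+r$ is satisfied by at most $d$ values of $\ell$.

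For $A_{R,r,h,k}$, I would bound the three contributions in turn. The sawtooth piece uses $|(\!(x)\!)|\le\tfrac12$, so summing over $\ell$ yields $O(k)$ and multiplying by the prefactor $\tfrac{k}{2\pi R}$ gives $O_R(k^2)$. The $\log\!\bigl(4\sin^2(\pi h(R\ell+r)/k)\bigr)$ piece is controlled by the elementary bound $\sum_{j=1}^{k-1}\bigl|\log|2\sin(\pi j/k)|\bigr|=O(k)$, which follows from the integrability of $\log|2\sin(\pi x)|$ on $[0,1]$ combined with the identity $\prod_{j=1}^{k-1}2\sin(\pi j/k)=k$; this yields $O_R(k^2)$ after the prefactor. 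Finally, the $\delta_{k\mid R\ell+r}$ piece contributes for at most $d\le R$ values of $\ell$: for $k\geq2$ each term is $O_R(\log k)$ (from $|\log(2\pi R/k)|+|\psi((R\ell+r)/(Rk))|$ with $(R\ell+r)/(Rk)\in\{1/R,2/R,\ldots,1\}$), producing $O_R(k\log k)$ overall, while for $k=1$ the entire expression collapses to $-\tfrac{1}{2\pi R}\bigl(\log(2\pi R)+\psi(r/R)\bigr)=O_R(1)$, which is exactly what the $\delta_{k=1}$ term accommodates.

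For $B_{R,r,h,k}$, the factor $\tfrac{R\ell+r}{Rk}-\tfrac12$ is bounded by $\tfrac12$, so the piece not involving the cotangent contributes $O(k)$. For the cotangent piece, the estimate $|\cot(\pi j/k)|\ll k/\mathrm{dist}(j,k\IZ)$ yields $\sum_{j=1}^{k-1}|\cot(\pi j/k)|=O(k\log k)$; restricting the sum to the coset $hr+d\IZ/k\IZ$ (with each residue hit $d$ times) loses at most a factor of $R$, giving $O_R(k\log k)\ll k^2$.

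The only mildly subtle step is isolating the $k=1$ case for $A_{R,r,h,k}$: in that regime the bulk of the sum vanishes and the residual $O_R(1)$ contribution would not fit into any $k^2\log k$-type bound without the explicit $\delta_{k=1}$ correction in the statement. Everything else is a mechanical application of the three elementary estimates above, and I expect no serious obstacles.
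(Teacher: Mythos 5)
Your proof is correct and follows essentially the same route as the paper's, which compresses everything into a one-line appeal to the definitions \eqref{eq:alpha_definition} and \eqref{eq:beta_definition} together with elementary properties of the digamma function: you simply supply the details, namely termwise estimation of each piece using the coset structure of $\ell\mapsto h(R\ell+r)\pmod{k}$, the estimate $\sum_{j=1}^{k-1}\bigl|\log|2\sin(\pi j/k)|\bigr|=O(k)$, the count of at most $\gcd(R,k)$ values of $\ell$ with $k\mid R\ell+r$, and the $k=1$ degeneration that forces the $\d_{k=1}$ term. The one imprecision is in the last step for $B_{R,r,h,k}$: the lemma asserts $B_{R,r,h,k}\ll k^2$ with an implied constant independent of $R$, whereas your cotangent estimate loses a factor $d=\gcd(R,k)\le R$ from the multiplicity of the coset and so, as written, only delivers $\ll_R k^2$. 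This is trivially repaired: either note that the nonzero coset elements are spaced $d$ apart, so $\sum_{j}|\cot(\pi j/k)|\ll k+\tfrac{k\log k}{d}$ over the coset and the factor $d\le k$ cancels to give $\ll k^2$ absolutely, or simply apply the trivial bound $|\cot(\pi j/k)|\le\cot(\pi/k)\le k/\pi$ to each of the at most $k$ terms.
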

\begin{proof}
The result follows from \eqref{eq:alpha_definition} and~\eqref{eq:beta_definition} while noting that the digamma function $\psi$ is continuous, strictly increasing, negative in $(0,1]$, and we have $\psi(x)=-\frac{1}{x}+O(1)$ as $x\to 0^+$.
\end{proof}

We continue by bounding the nonprincipal parts within the indefinite theta portion $e^{-2\pi i d \t} f_{R,{\bm \a}} (\t)$ mixed with an exponentially growing piece.

\begin{lem}\label{lem:indef_part_nonprincipal}
Let $0 \leq d \leq 1$, $R$, $k \in \IN$, $h' \in \IZ$ with $\gcd (h',k)=1$, and $\a_1, \a_2 \in \{0,1,\ldots,R-1\}$. Then, for $w \in \IC$ with $w_1 \geq 1$, we have
(with the error term bounded by an absolute constant) 
\begin{multline*}
e^{-2 \pi i d \left(\frac{h'}{k} + iw\right)} f_{R,{\bm \a}} \!\lp \frac{h'}{k} + iw \rp
= \frac{\d_{0 < \a_1\a_2 < Rd}}{2}  e^{2 \pi \left(d-\frac{\a_1 \a_2}{R}\right)\left(w-\frac{ih'}{k}\right)}
\\
- \frac{\d_{(R-\a_1)(R-\a_2) < Rd}}{2}
e^{2 \pi \left(d-\frac{(R-\a_1)(R-\a_2)}{R}\right)\left(w-\frac{ih'}{k}\right)}
+ O(1) .
\end{multline*}
\end{lem}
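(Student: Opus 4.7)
The natural plan is to decompose $f_{R,\bm{\a}}$ into its two sign-constant pieces, peel off the lattice points that produce exponential growth, and bound everything else uniformly. Because $\sgn(n_1)+\sgn(n_2)$ equals $\pm 2$ when $n_1,n_2$ share a sign and vanishes otherwise, \eqref{eq:f_indef_definition} gives
\[
f_{R,\bm{\a}}(\tau) \;=\; \tfrac12 \!\!\!\! \sum_{\substack{n_1,n_2>0 \\ \bm{n}\in\Z^2+\bm{\a}/R}} \!\!\!\! q^{Rn_1n_2} \;-\; \tfrac12 \!\!\!\! \sum_{\substack{n_1,n_2<0 \\ \bm{n}\in\Z^2+\bm{\a}/R}} \!\!\!\! q^{Rn_1n_2},
\]
and the substitution $\bm{n}\mapsto-\bm{n}$ rewrites the second sum over the positive cone with $\bm{n}\in\Z^2-\bm{\a}/R$. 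At $\tau=\tfrac{h'}{k}+iw$, each summand of $e^{-2\pi id\tau}q^{Rn_1n_2}$ has modulus $e^{2\pi(d-Rn_1n_2)w_1}$, so only lattice points with $Rn_1n_2<d\le 1$ can contribute terms that are not already uniformly bounded.

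Next I would locate these ``principal'' points in each cone. For the first sum, the minimum of $Rn_1n_2$ over the positive cone equals $\a_1\a_2/R$, attained at $n_j=\a_j/R$, provided both $\a_j>0$; if some $\a_j=0$ the minimum jumps to a value $\ge 1$ and contributes no principal term, consistent with the indicator $\d_{0<\a_1\a_2<Rd}$. For the reflected lattice $\Z^2-\bm{\a}/R$ the minimum equals $(R-\a_1)(R-\a_2)/R$ when both $\a_j>0$; the convention $R-\a_j=R$ in the case $\a_j=0$ is harmless, because then the resulting indicator forces $R-\a_j<d\le 1$, which is impossible. Peeling off these two single terms (with signs $+\tfrac12$ and $-\tfrac12$) reproduces exactly the two explicit exponentials in the statement, after writing $e^{2\pi i(Rn_1n_2-d)\tau}=e^{2\pi(d-Rn_1n_2)(w-ih'/k)}$.

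For every remaining lattice point, $Rn_1n_2\ge 1\ge d$; since $w_1\ge 1$ this gives
\[
e^{2\pi(d-Rn_1n_2)w_1}\;\le\;e^{2\pi(d-Rn_1n_2)},
\]
which removes all $w$-dependence. Summing geometrically, first in $n_2$ at fixed $n_1$ and then in $n_1$, the tail is bounded by a fixed constant. The main obstacle will be verifying that this constant is truly \emph{absolute}, i.e.\ independent of $R$ and $\bm{\a}$. For large $R$ with some $\a_j$ small the lattice spacing shifts and the non-principal minimum can grow with $R$, so I would split into the three sub-cases (both $\a_j>0$; exactly one $\a_j=0$; both $\a_j=0$). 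In each case, the bound $Rn_j\ge 1$ for non-minimal lattice points, combined with the $R$ in the exponent $e^{-2\pi Rn_1n_2}$, leaves geometric series whose ratios are uniformly bounded away from $1$, so the tail sum is controlled by a universal constant.
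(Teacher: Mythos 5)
Your proof is correct and follows essentially the same route as the paper's: split $f_{R,\bm\a}$ into its two sign-definite cones, peel off the unique minimal-exponent lattice point in each cone (present exactly when the corresponding indicator is nonzero), and bound the remainder by a $w$-independent, absolutely convergent sum. One tiny imprecision: when an indicator vanishes but $\a_1\a_2>0$, the un-peeled minimal point has $Rn_1n_2=\a_1\a_2/R$, which can be $<1$, so your blanket claim that every remaining point satisfies $Rn_1n_2\ge 1$ fails for that single term --- but the vanishing indicator forces $\a_1\a_2\ge Rd$, so that term is still at most $1$ and the conclusion is unaffected.
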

\begin{proof}
We start by defining
\begin{equation*}
J_{R,{\bm \a},d} (\t) \!:=\!e^{-2\pi i d \t} f_{R,{\bm \a}} (\t) 
\!-\! \!\lp\! \frac{\d_{0 < \a_1 \a_2 < Rd}}{2}  e^{-2 \pi i \lp d-\frac{{\a_1 \a_2}}{R} \rp \t}
\!-\! \frac{\d_{(R-\a_1)(R-\a_2) < Rd}}{2}  e^{-2 \pi i \lp d-\frac{(R-\a_1)(R-\a_2)}{R} \rp \t} \!\rp \! 
\end{equation*}
so that the lemma is equivalent to $J_{R,\bm{\a},d} (\t) = O(1)$  for $\t \in \IH$ with $\t_2 \geq 1$.
Using the definition of $f_{R,\bm{\a}}$ in \eqref{eq:f_indef_definition}, letting ${a_1} := \a_1 + R \d_{\a_1=0}$ and ${a_2} := \a_2 + R \d_{\a_2=0}$ so that ${a_1},{a_2} \in \{1,2,\ldots,R\}$, and splitting off the contribution from $\bm{n}=\bm0$ below, we find that
\begin{multline*}
|J_{R,\bm{\a},d} (\t)| \leq
\frac{\d_{{a_1}{a_2} \geq Rd}}{2} e^{- 2 \pi \lp \frac{{a_1}{a_2}}{R}  - d \rp \t_2}
+ \frac{\d_{(R-\a_1)(R-\a_2) \geq Rd}}{2} 
e^{- 2 \pi \lp \frac{1}{R} (R-\a_1)(R-\a_2) - d \rp \t_2}
\\
+\frac{1}{2} \sum_{\substack{{\bm n} \in \mathbb{N}_0^2\setminus \{\bm 0\} }} 
e^{- 2 \pi \lp R \lp n_1 +\frac{{a_1}}{R} \rp \lp n_2 + \frac{{a_2}}{R} \rp -d \rp \t_2}
+ \frac{1}{2}  \sum_{\substack{{\bm n} \in \mathbb{N}_0^2 \setminus \{\bm 0\}}} 
e^{- 2 \pi \lp R \lp n_1 +\frac{R-\a_1}{R} \rp \lp n_2 + \frac{R-\a_2}{R} \rp -d \rp \t_2}.
\end{multline*}
The claim now follows, using that $d\le 1$, the condition on $\a_1,\a_2$, and splitting off the contribution from $n_1=0$ and $n_2=0$.
\end{proof}

We continue with the Mordell-type components \eqref{eq:calI_definition} arising from the false-indefinite part mixed with an exponentially growing piece. More specifically, assuming $d \geq 0$ and $Rd \not\in \IN$, we decompose \eqref{eq:calI_definition} into principal and nonprincipal parts as
\begin{equation}\label{eq:calI_decomposition_principal_nonprincipal}
e^{2 \pi d w} \mathcal{I}_{R,{\bm \a},\frac{h'}{k}} \!\lp \frac{h'}{k} + iw \rp 
= 
\mathcal{I}^*_{R,{\bm \a},\frac{h'}{k},d} \!\lp \frac{h'}{k} + iw \rp 
+
\mathcal{I}^e_{R,{\bm \a},\frac{h'}{k},d} \!\lp \frac{h'}{k} + iw \rp ,
\end{equation}
where
\begin{align}
\mathcal{I}^*_{R,{\bm \a},\frac{h'}{k},d} \!\lp \frac{h'}{k} + iw \rp 
&:=
\frac{e^{2 \pi d w}}{2\pi i} 
\sum_{\substack{{\bm n}\in \mathbb{Z}^2 +\frac{{\bm \a}}{R}\\n_1,n_2 \neq 0 }}
\frac{e^{\frac{2\pi iRh'n_1n_2}{k}}}{Rn_1 n_2} 
\mathrm{PV}\!\int_0^d \!
\frac{x  e^{- 2 \pi w x}}{x\!-\!Rn_1n_2}  dx,
\label{eq:false_indef_part_principal_defn}
\\
\mathcal{I}^e_{R,{\bm \a},\frac{h'}{k},d} \!\lp \frac{h'}{k} + iw \rp 
&:=
\frac{e^{2 \pi d w}}{2\pi i} 
\sum_{\substack{{\bm n}\in \mathbb{Z}^2 +\frac{{\bm \a}}{R}\\n_1,n_2 \neq 0}}
\frac{e^{\frac{2\pi iRh'n_1n_2}{k}}}{Rn_1 n_2} 
\mathrm{PV}\!\int_d^\infty\!
\frac{x  e^{- 2 \pi w x}}{x\!-\!Rn_1n_2}  dx .
\label{eq:false_indef_part_nonprincipal_defn}
\end{align}
We next bound the nonprincipal part $\mathcal{I}^e$ while also proving the convergence of the given sum along with the holomorphy of the resulting function (which also proves the same for $\mathcal{I}^*$ as we show this for $\mathcal{I}$ in the proof of Theorem~\ref{thm:modulartransformation} in Subsection \ref{sec:modular_transformation_explicit}).

\begin{lem}\label{lem:false_indef_part_nonprincipal}
Let $R,k \in \IN$, $d \geq 0$ with $Rd \not\in \IN$, $h' \in \IZ$ with $\gcd (h',k)=1$, and ${\bm \a} \in \IZ^2$. Then, for $w \in \IC$ with $w_1 > 0$, \eqref{eq:false_indef_part_nonprincipal_defn} converges to a holomorphic function of $w$. For $w_1 \geq 1$, we have 
\begin{equation}\label{Mathcal}
\mathcal{I}^e_{R,{\bm \a}, \frac{h'}{k},d} \!\lp \frac{h'}{k} + iw \rp = O_{R,d} (1).
\end{equation}
\end{lem}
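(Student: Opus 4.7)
The plan is to convert the principal value integrals over $[d,\infty)$ into integrals over $[0,\infty)$, so that Lemma~\ref{lem:pv_integral_holomorphy_bound} becomes directly applicable, and then to estimate the resulting series term by term. A preliminary observation is that the hypothesis $Rd\notin\IN$ (together with $n_1,n_2\neq 0$) forces $Rn_1n_2\neq d$ for every $\bm n\in \IZ^2+\frac{\bm\a}{R}$ under consideration: indeed, $Rn_1n_2$ always lies in $\IZ+\frac{\a_1\a_2}{R}$, while $Rd$ is either $0$ (and excluded since $Rn_1n_2\neq 0$) or non-integral.

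The main step is to establish, for each individual $\bm n$, the identity
\begin{equation*}
e^{2\pi dw}\mathrm{PV}\!\int_d^\infty\!\frac{xe^{-2\pi wx}}{x-Rn_1n_2}\,dx
= \frac{Rn_1n_2}{Rn_1n_2-d}\mathrm{PV}\!\int_0^\infty\!\frac{xe^{-2\pi wx}}{x-(Rn_1n_2-d)}\,dx
- \frac{d}{2\pi w(Rn_1n_2-d)},
\end{equation*}
which follows from the substitution $x\mapsto x+d$ together with the partial-fraction identities $\frac{x+d}{x-a}=1+\frac{a+d}{x-a}$ and $\frac{1}{x-a}=\frac{x}{a(x-a)}-\frac{1}{a}$, applied with $a:=Rn_1n_2-d\neq 0$. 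Inserting this identity into \eqref{eq:false_indef_part_nonprincipal_defn} decomposes $\mathcal{I}^e_{R,\bm\a,\frac{h'}{k},d}(\frac{h'}{k}+iw)$ into a sum of two series: one built from the new $[0,\infty)$ principal value integrals, and the auxiliary series
\begin{equation*}
-\frac{d}{4\pi^2 iw}\sum_{\substack{\bm n\in\IZ^2+\frac{\bm\a}{R}\\n_1,n_2\neq 0}}\frac{e^{\frac{2\pi iRh'n_1n_2}{k}}}{Rn_1n_2(Rn_1n_2-d)}.
\end{equation*}

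The auxiliary series converges absolutely: its summands are dominated by $C_R/(n_1n_2)^2$ once $|Rn_1n_2|\geq 2d$, and the finitely many remaining terms are well defined since $Rn_1n_2\neq d$; together with the prefactor $\frac{1}{|w|}\leq 1$ for $w_1\geq 1$, this contributes $O_{R,d}(1)$. For the first series, applying Lemma~\ref{lem:pv_integral_holomorphy_bound} with $\alpha=Rn_1n_2-d$ yields
\begin{equation*}
\left|\frac{1}{Rn_1n_2-d}\mathrm{PV}\!\int_0^\infty\!\frac{xe^{-2\pi wx}}{x-(Rn_1n_2-d)}\,dx\right|
\leq \pi e^{-2\pi(Rn_1n_2-d)w_1}\d_{Rn_1n_2>d}
+ \frac{1}{(Rn_1n_2-d)^2w_1^2}.
\end{equation*}
For $w_1\geq 1$, the $(Rn_1n_2-d)^{-2}$ contribution sums to $O_{R,d}(1)$ by comparison with $\sum_{n_1,n_2\neq 0}(n_1n_2)^{-2}$. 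The exponential contribution is supported on pairs $\bm n$ with $n_1,n_2$ of the same sign and $Rn_1n_2>d$; ordering these by the value of $n_1n_2$ and using $w_1\geq 1$ shows that the sum decays geometrically and is also $O_{R,d}(1)$. Combining both pieces proves the bound \eqref{Mathcal}.

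Holomorphy of $\mathcal{I}^e_{R,\bm\a,\frac{h'}{k},d}$ on $\{w_1>0\}$ then follows at once: each summand is holomorphic there by Lemma~\ref{lem:pv_integral_holomorphy_bound}, and the estimates above apply uniformly on compact subsets of $\{w_1>0\}$ after replacing $w_1$ by any positive lower bound, so the Weierstrass $M$-test gives locally uniform convergence. The only delicate point is the per-$\bm n$ identity itself, where one must carefully track the principal value through the change of variable and the partial-fraction manipulations; once the identity is in place, the subsequent bounds and the holomorphy are routine applications of Lemma~\ref{lem:pv_integral_holomorphy_bound}.
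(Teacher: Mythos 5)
Your proof is correct and takes essentially the same route as the paper's: the same shift $x\mapsto x+d$ and partial-fraction identity (yielding exactly the paper's decomposition \eqref{eq:false_indef_part_nonprincipal_decomposition}), followed by an application of Lemma~\ref{lem:pv_integral_holomorphy_bound} and absolute, locally uniform convergence of the resulting series. The paper is terser about the term-by-term summation estimates, but the argument is the same.
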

\begin{proof}
We follow the proof of Lemma 5.1 from \cite{BCN}, start with the change of variables $x \mapsto x+d$, and use $x+d = \frac{Rn_1n_2}{Rn_1n_2-d} x - \frac{d(x - (Rn_1n_2-d))}{Rn_1n_2-d} $
to rewrite the integral in \eqref{eq:false_indef_part_nonprincipal_defn} as
\begin{equation}\label{eq:false_indef_part_nonprincipal_decomposition}
e^{2 \pi d w} 
\mathrm{PV}\!\int_d^\infty\!
\frac{x  e^{- 2 \pi w x}}{x\!-\!Rn_1n_2}  dx
=
\frac{Rn_1n_2}{Rn_1n_2-d} 
\mathrm{PV}\!\int_0^\infty \frac{x e^{- 2 \pi w x}}{x- (Rn_1n_2-d)}  dx
- \frac{d}{Rn_1n_2-d} \frac{1}{2 \pi w},
\end{equation}
while noting that $Rn_1n_2-d \neq 0$.
The integral on the right-hand side converges to a holomorphic function of $w$ for $w_1 > 0$ by Lemma~\ref{lem:pv_integral_holomorphy_bound}.
We next show that the sum over ${\bm n}$ in \eqref{eq:false_indef_part_nonprincipal_defn} is absolutely and uniformly convergent over $w_1 \geq \d$ for any $\d > 0$, which then proves its holomorphy for $w_1 > 0$. We again use \eqref{eq:false_indef_part_nonprincipal_decomposition} and Lemma~\ref{lem:pv_integral_holomorphy_bound} to find
\begin{equation*}
\sum_{\substack{\bm{n} \in \IZ^2 + \frac{\bm{\a}}{R} \\ n_1,n_2 \neq 0}}
\left| \frac{e^{2 \pi d w}}{2\pi i} 
\frac{e^{\frac{2\pi iRh'n_1n_2}{k}}}{Rn_1 n_2} 
\mathrm{PV}\!\int_d^\infty\!
\frac{x  e^{- 2 \pi w x}}{x\!-\!Rn_1n_2}  dx \right|
\ll_{R,d,\d} 1 .
\end{equation*}
This completes the proof of holomorphy for $w_1 > 0$. The same bound with $\d=1$ yields \eqref{Mathcal}.
\end{proof}

\section{Proof of Theorem~\ref{thm:asymptotic}}\label{sec:circle_method}
In this section, we apply the Circle Method to \eqref{eq:That_generating_function} and prove Theorem~\ref{thm:asymptotic}. 
We use Rademacher's integration path for $N \in \IN$ (see, e.g.~\cite{Apostol} for details) to write (recalling $n_s = n-\frac{1}{24}$)
\begin{equation}\label{eq:That_Cauchy_expression}
\calT_{R,r}(n) 
= i \sum_{k=1}^N k^{-2} \sum_{\substack{0 \leq h < k \\ \gcd(h,k) = 1}} 
\int_{z_1}^{z_2} \frac{F_{R,r} \!\lp \frac{h}{k} + \frac{iz}{k^2} \rp}{\eta \!\lp \frac{h}{k} + \frac{iz}{k^2} \rp} 
e^{-2\pi i n_s \lp \frac{h}{k} + \frac{iz}{k^2} \rp} d z,
\end{equation}
where\footnote{Note that $z_1$ and $z_2$ should not be confused with the real and imaginary part of $z$.} $z_j := z_j (h,k;N)$ is
(with $\frac{h_1}{k_1} < \frac{h}{k} < \frac{h_2}{k_2}$ consecutive terms in the Farey sequence of order $N$)
\begin{equation*}
z_1 (h,k;N) := \frac{k}{k-ik_1} \andd
z_2 (h,k;N) := \frac{k}{k+ik_2} .
\end{equation*}
Here we take $k_1=k_2=N$ for $(h,k)=(0,1)$ and assume that $N = \lfloor \sqrt{n} \rfloor$ from now on.

Next, we use the transformation law of $F_{R,r}$ from Theorem~\ref{thm:modulartransformation} along with the modular properties of the Dedekind eta function (with $\nu_\eta$ denoting its multiplier system)
\begin{equation*}
\eta \!\lp \frac{h}{k} + \frac{iz}{k^2} \rp = \nu_\eta (M_{h,k}) \sqrt{\frac{ik}{z}}
\eta \!\lp \frac{h'}{k} + \frac{i}{z} \rp,
\end{equation*}
under the modular transformation $M_{h,k}$ from \eqref{eq:M_hk_definition}.
Defining (see \eqref{eq:psi_definition} for the Weil multiplier~$\psi_{R,r,h,k}$)
\begin{equation}\label{eq:nu_varphi_gamma_definition}
\nu_{h,k} := e^{-\frac{\pi i}{4}} \nu_\eta^{-1} (M_{h,k})
\andd
\varphi_{R,r,h,k} ({\bm \a}) := \nu_{h,k} \psi_{R,r,h,k} ({\bm \a}),
\end{equation}
we then split \eqref{eq:That_Cauchy_expression} into five pieces coming from the five terms appearing in Theorem~\ref{thm:modulartransformation}:
\begin{equation}\label{splitT}
\calT_{R,r}(n) =\sum_{j=1}^{5} \calT_{R,r}^{[j]}(n),
\end{equation}
where 
\begin{align}
\notag
\calT^{[1]}_{R,r}(n) &:=
- i \sum_{k=1}^N \frac{C_{R,r,k}}{k^{\frac{5}{2}} } 
\sum_{\substack{0 \leq h < k \\ \gcd(h,k) = 1}} \nu_{h,k}
\int_{z_1}^{z_2}
\frac{\Log (z)}{\sqrt{z} \eta \!\lp \frac{h'}{k} + \frac{i}{z} \rp}
e^{-2\pi i  n_s \lp \frac{h}{k} + \frac{iz}{k^2} \rp} d z,
\\ 
\notag
\calT^{[2]}_{R,r}(n) &:=
i \sum_{k=1}^N k^{-\frac{5}{2}} 
\sum_{\substack{0 \leq h < k \\ \gcd(h,k) = 1}} \nu_{h,k} A_{R,r,h,k} 
\int_{z_1}^{z_2}
\frac{1}{\sqrt{z} \eta \!\lp \frac{h'}{k} + \frac{i}{z} \rp}
e^{-2\pi i  n_s \lp \frac{h}{k} + \frac{iz}{k^2} \rp} d z,
\\ 
\notag
\calT^{[3]}_{R,r}(n) &:=
i \sum_{k=1}^N k^{-\frac{5}{2}} 
\sum_{\substack{0 \leq h < k \\ \gcd(h,k) = 1}} \nu_{h,k} B_{R,r,h,k} 
\int_{z_1}^{z_2}
\frac{\sqrt{z}}{\eta \!\lp \frac{h'}{k} + \frac{i}{z} \rp}
e^{-2\pi i  n_s \lp \frac{h}{k} + \frac{iz}{k^2} \rp} d z,
\\ 
\notag
\calT^{[4]}_{R,r}(n) &:=
- \sum_{\a_1,\a_2 =0}^{R-1} \sum_{k=1}^N k^{-\frac{3}{2}}  \!\!\!\!\!\!
\sum_{\substack{0 \leq h < k \\ \gcd(h,k) = 1}} \!\!\!\!\!\! \varphi_{R,r,h,k} ({\bm \a}) 
\int_{z_1}^{z_2} \!
\frac{f_{R,{\bm \a}} \!\lp \frac{h'}{k} + \frac{i}{z} \rp}{\sqrt{z} \eta \!\lp \frac{h'}{k} + \frac{i}{z} \rp}
e^{-2\pi i  n_s \lp \frac{h}{k} + \frac{iz}{k^2} \rp} d z,
\\
\label{eq:That5_definition}
\calT^{[5]}_{R,r}(n) &:=
- \sum_{\a_1,\a_2 =0}^{R-1} \sum_{k=1}^N k^{-\frac{3}{2}}  \!\!\!\!\!\!
\sum_{\substack{0 \leq h < k \\ \gcd(h,k) = 1}} \!\!\!\!\!\! \varphi_{R,r,h,k} ({\bm \a}) 
\int_{z_1}^{z_2} \!
\frac{\mathcal{I}_{R,{\bm \a},\frac{h'}{k}} \!\lp \frac{h'}{k} + \frac{i}{z} \rp}{\sqrt{z} \eta \!\lp \frac{h'}{k} + \frac{i}{z} \rp}
e^{-2\pi i  n_s \lp \frac{h}{k} + \frac{iz}{k^2} \rp} d z.
\end{align}
We start with the most complicated contribution $\calT^{[5]}_{R,r}(n)$ coming from the false-indefinite components of the generating function, for which we give a more detailed account following \cite{BCN}. To state our result, we define the Kloosterman sum associated with $\varphi$ from \eqref{eq:nu_varphi_gamma_definition} as
\begin{equation}\label{eq:Kloosterman5_definition}
K_{R,r,\bm{\a},\bm{\k}}^{[5]} (n,k)
 := 
\sum_{\substack{0 \leq h < k \\ \gcd(h,k) = 1}}
\!\!\!\!\!\!
\varphi_{R,r,h,k} \!\lp {\bm \a} \rp 
e^{ \frac{2\pi ih'R}{k}  \left(\k_1+\frac{\a_1}{R}\right)\left(\k_2+\frac{\a_2}{R}\right) -\frac{\pi i h'}{12k} - \frac{2\pi i  n_s h}{k}}.
\end{equation}
We also define the integral kernel
\begin{equation}\label{eq:phi_function_definition}
\Phi_{R,k,\bm{\l}} (w) := \sum_{\substack{\bm{n} \in \IZ^2 + \frac{\bm{\l}}{Rk} \\ n_1,n_2 \neq 0}} \frac{w}{Rk^2 n_1n_2 (w-Rk^2n_1n_2)},
\end{equation}
which is a meromorphic function on $\IC$ with simple poles contained in $( k \IZ + \frac{\l_1 \l_2}{R} ) \setminus \{0\}$.

\begin{lem}\label{lem:That_5_asymptotic_result}
If $n,R \in \IN$, $24\nmid R$, and $r \in \{1,2,\ldots, R\}$, then, as $n \to \infty$, we have 
\begin{multline*}
\calT^{[5]}_{R,r}(n) = \frac{1}{(24n_s)^{\frac{1}{4}}} 
\sum_{\a_1,\a_2 =0}^{R-1} \sum_{k=1}^{\lfloor \sqrt{n} \rfloor}
\sum_{\k_1,\k_2 \pmod{k}}
\frac{K_{R,r,\bm{\a},\bm{\k}}^{[5]} (n,k)}{k} 
\\ \times
\operatorname{PV} \int_0^{\frac{1}{24}} 
\Phi_{R,k,R \bm{\k}+\bm{\a}} (t) (1-24t)^{\frac{1}{4}} 
I_{\frac{1}{2}}\! \lp \frac{\pi}{k} \sqrt{ \frac{2n_s}{3} (1-24t)} \rp dt
+ O_R\!\lp n^{\frac{1}{4}} \log (n) \rp .
\end{multline*}
\end{lem}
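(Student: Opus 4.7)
\medskip

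\noindent\textbf{Proof plan for Lemma~\ref{lem:That_5_asymptotic_result}.}

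The starting point is to apply the principal/nonprincipal decomposition \eqref{eq:calI_decomposition_principal_nonprincipal} to $\mathcal{I}_{R,\bm{\a},h'/k}$ with the specific choice $d=\frac{1}{24}$. Setting $w=\frac{1}{z}$, the exponential prefactor $e^{-2\pi d w}=e^{-\pi/(12z)}$ attached to $\mathcal{I}^{e}$ is exactly canceled by the leading piece $e^{\pi/(12z)}$ arising from the expansion
\[
\frac{1}{\eta\!\lp\frac{h'}{k}+\frac{i}{z}\rp}
= e^{-\pi i h'/(12k)} e^{\pi/(12z)}\prod_{m\geq 1}\!\lp 1-e^{2\pi i m h'/k}e^{-2\pi m/z}\rp^{-1}.
\]
The factors from the infinite product, together with the nonprincipal piece $\mathcal{I}^{e}$, will contribute only to the error term; in particular, by Lemma~\ref{lem:false_indef_part_nonprincipal} the bound $\mathcal{I}^{e}_{R,\bm{\a},h'/k,1/24}=O_R(1)$ holds uniformly on the Farey arcs, so its contribution to $\calT^{[5]}_{R,r}(n)$ is controlled by bounding the Kloosterman sum trivially and estimating $\int_{z_1}^{z_2}\lvert\sqrt z\rvert^{-1}e^{2\pi n_s z_1/k^2}\,dz$ arc-by-arc.

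For the principal part, I plan to interchange the PV integral $\mathrm{PV}\!\int_{0}^{1/24}$ appearing in $\mathcal{I}^{*}_{R,\bm{\a},h'/k,1/24}$ with the Rademacher integral $\int_{z_1}^{z_2}$ and with the sum over~$\bm{n}$. Writing $\bm{n}=\bm{\k}+\frac{\bm{\a}}{R}+k\bm{\mu}$ with $\bm{\k}\in\{0,\ldots,k-1\}^2$, $\bm{\mu}\in\IZ^2$, and setting $\bm{n}'=\bm{n}/k\in\IZ^2+(R\bm{\k}+\bm{\a})/(Rk)$, the identity $Rn_1n_2=Rk^2 n_1'n_2'$ reorganizes the $\bm{n}$-sum into a sum over~$\bm{\k}\pmod k$ of a lattice sum producing exactly the kernel $\Phi_{R,k,R\bm{\k}+\bm{\a}}$ in \eqref{eq:phi_function_definition}. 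At the same time, the reduction $e^{2\pi i R h' n_1 n_2/k}=e^{2\pi i h' R/k\,(\k_1+\a_1/R)(\k_2+\a_2/R)}$ (the mixed terms being multiples of $k$) combines with $\varphi_{R,r,h,k}(\bm{\a})$, the eta prefactor $e^{-\pi i h'/(12k)}$, and the Cauchy weight $e^{-2\pi i n_s h/k}$ to assemble the Kloosterman-like sum $K^{[5]}_{R,r,\bm{\a},\bm{\k}}(n,k)$ in \eqref{eq:Kloosterman5_definition}.

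After this reorganization the remaining inner integral is of the form
\[
\int_{z_1}^{z_2}\frac{1}{\sqrt z}\,e^{\pi(1-24t)/(12z)+2\pi n_s z/k^2}\,dz,
\]
since the $e^{-2\pi x/z}$ factor combines with the leading eta piece to yield the exponent $\pi(1-24t)/(12z)$. Using the standard Rademacher/Ford-circle device, I would close each Farey arc to a full circle and deform onto a Hankel-type contour as in \eqref{eq:I_Bessel_Hankel_integral}; the substitution $z\mapsto\sqrt{A/B}\,u$ with $A=\pi(1-24t)/12$ and $B=2\pi n_s/k^2$ gives $2\sqrt{AB}=\frac{\pi}{k}\sqrt{2n_s(1-24t)/3}$ and the normalizing factor $(A/B)^{1/4}=k^{1/2}(1-24t)^{1/4}/(24n_s)^{1/4}$, exactly matching the stated $I_{1/2}$-Bessel expression. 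The arc-to-circle extension produces a controlled error on the order of $n^{1/4}\log n$ uniformly in $k\leq\sqrt n$.

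\medskip

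\noindent\textbf{Main obstacle.} The bookkeeping itself is the delicate step. Three error sources must all be collected into the single bound $O_R(n^{1/4}\log n)$: (i) the contribution of $\mathcal{I}^{e}$, which is pointwise $O_R(1)$ but paired with the full exponential $e^{2\pi n_s z/k^2}$; (ii) the non-leading terms of the $1/\eta$ expansion, which are exponentially suppressed in $e^{-2\pi/z}$ and should be summed against the Bessel integrals; and (iii) the Ford-circle completion error. The interchange of the PV integral with the lattice sum and the $z$-integral must be justified via the uniform bound of Lemma~\ref{lem:pv_integral_holomorphy_bound}. Finally, the hypothesis $24\nmid R$ is what guarantees that $\Phi_{R,k,R\bm{\k}+\bm{\a}}$ has no pole at the endpoint $t=\frac{1}{24}$ of the outer $\operatorname{PV}$-integral, so that the resulting outer integral is finite in the principal-value sense.
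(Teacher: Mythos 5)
Your plan follows the paper's proof essentially step for step: the same splitting of $\mathcal{I}_{R,\bm{\a},\frac{h'}{k}}$ at $d=\frac{1}{24}$ via \eqref{eq:calI_decomposition_principal_nonprincipal}, the same use of Lemma~\ref{lem:false_indef_part_nonprincipal} to discard $\mathcal{I}^e$ together with the subleading part of $1/\eta$, the same rescaling $\bm{n}\mapsto k\bm{n}$ producing the kernel $\Phi_{R,k,R\bm{\k}+\bm{\a}}$ and the Kloosterman sum $K^{[5]}_{R,r,\bm{\a},\bm{\k}}$, and the same Hankel-contour evaluation of the $z$-integral.

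One point, however, is asserted rather than argued, and it is precisely where the work in the paper lies: the $\log(n)$ in the error term is \emph{not} a generic feature of the arc-to-contour completion. It arises because $\Phi_{R,k,R\bm{\k}+\bm{\a}}$ has a simple pole $t_{R,k,R\bm{\k}+\bm{\a}}$ in the \emph{interior} of $(0,\frac{1}{24})$. The paper splits this pole off in \eqref{eq:kernel_phi_pole_separation}, treats its principal value on the symmetric interval around the pole by a mean-value bound after removing the singular part, and it is the PV integral of this pole term against the circular arcs $\mathcal{C}_3,\mathcal{C}_4$ of the completed contour that produces the extra $\log(n)$. Your observation that $24\nmid R$ keeps the pole away from the endpoint $t=\frac{1}{24}$ is correct but is only half the story: the interior pole must still be separated before interchanging the $t$-integral with the $z$-integral and before any termwise estimate, since Lemma~\ref{lem:pv_integral_holomorphy_bound} controls the inner Mordell integral in $\bm{n}$ but says nothing about the $t$-singularity of $\Phi$. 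A smaller caveat: to read off $I_{\frac{1}{2}}$ (rather than $I_{-\frac{1}{2}}$) from \eqref{eq:I_Bessel_Hankel_integral} you should, as the paper does, first invert $z\mapsto\frac{1}{z}$ so that the integrand takes the form $z^{-\frac{3}{2}}e^{2\pi(n_s/(k^2z)+(\frac{1}{24}-t)z)}$ with the large parameter attached to $\frac{1}{z}$; with your normalization $\int z^{-\frac{1}{2}}e^{A/z+Bz}\,dz$ and a Hankel contour about the origin the order of the Bessel function comes out as $-\frac{1}{2}$, so the contour bookkeeping needs care even though your factor $(A/B)^{\frac14}$ and the argument $2\sqrt{AB}$ are correct.
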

\begin{proof}
We start with \eqref{eq:That5_definition} and use \eqref{eq:calI_decomposition_principal_nonprincipal} with $d = \frac{1}{24}$ (noting that $24 \nmid R$) to split
\begin{multline*}
\frac{\mathcal{I}_{R,{\bm \a},\frac{h'}{k}} \!\lp \frac{h'}{k} + \frac{i}{z} \rp}{\eta \!\lp \frac{h'}{k} + \frac{i}{z} \rp}
=
\zeta_{24k}^{-h'} \mathcal{I}^*_{R,{\bm \a},\frac{h'}{k},\frac{1}{24}} \!\lp \frac{h'}{k}+\frac{i}{z} \rp 
\\
+
\mathcal{I}_{R,{\bm \a},\frac{h'}{k}} \!\lp \frac{h'}{k} + \frac{i}{z} \rp
\lp \frac{1}{\eta \!\lp \frac{h'}{k} + \frac{i}{z} \rp} 
- e^{-\frac{\pi i}{12} \lp \frac{h'}{k} + \frac{i}{z} \rp} \rp
+ \zeta_{24k}^{-h'} 
\mathcal{I}^e_{R,{\bm \a},\frac{h'}{k},\frac{1}{24}} \!\lp \frac{h'}{k}+\frac{i}{z} \rp .
\end{multline*}
Here, the first term gives the principal part and the remaining ones give the nonprincipal part.
Using Lemma~\ref{lem:false_indef_part_nonprincipal} with $d=0$ and $d = \frac{1}{24}$, the nonprincipal part can be bounded as $\ll_R 1$ for $\re(\frac{1}{z})\geq 1$ (that is on and within the standard circle with center at $\frac{1}{2}$ and radius $\frac{1}{2}$). For the contribution of this part, we take the integral over the chord $\mathcal{C} (z_1,z_2)$ from $z_1$ to $z_2$, where we recall that for any $z \in \mathcal{C} (z_1,z_2)$ one has the bounds (see e.g.~Section~6 of \cite{BCN} for details)
\begin{equation*}
\frac{k^2}{2N^2} \leq \re (z) < \frac{2k^2}{N^2}, \qquad
\frac{k^2}{2N^2} \leq |z| < \frac{\sqrt{2}k}{N}, \andd
\mathrm{length} (\mathcal{C} (z_1,z_2)) < \frac{2 \sqrt{2} k}{N}.
\end{equation*}
Correspondingly, on $\mathcal{C} (z_1,z_2)$ and for any $n \in \mathbb{N}$, we have
$e^{-2\pi i  n_s  (\frac{h}{k} + \frac{iz}{k^2}) } \ll 1$ because $N = \lfloor \sqrt{n} \rfloor$.
Using these facts along with the trivial bound $|\varphi_{R,r,h,k} ({\bm \a})| \leq 1$ then yields
\begin{multline}\label{eq:That5_mid_point_v1}
\calT^{[5]}_{R,r}(n) =
\sum_{\a_1,\a_2 =0}^{R-1} \sum_{k=1}^N k^{-\frac{3}{2}}  \!\!\!\!\!\!
\sum_{\substack{0 \leq h < k \\ \gcd(h,k) = 1}} \!\!\!\!\!\! 
\varphi_{R,r,h,k} (\bm{\a}) 
\zeta_{24k}^{-h'-24n_sh}
\\[-7pt]
\times
\int_{1- \frac{i k_1}{k}}^{1+ \frac{i k_2}{k}}  \! z^{-\frac{3}{2}} 
\mathcal{I}^*_{R,\bm{\a},\frac{h'}{k},\frac{1}{24}} \!\lp \frac{h'}{k}+iz \rp
e^{ \frac{2 \pi n_s}{k^2 z}} dz +O_R\!\left(n^{\frac{1}{4}}\right)\!.
\end{multline}

We next recall the definition of $\mathcal{I}^*$ in \eqref{eq:false_indef_part_principal_defn}. Splitting off the finitely many terms with~$|Rn_1 n_2| < 1$ (where the principal value integral is needed), we can bound the remaining terms uniformly in the integration variable by $n_1^{-2}n_2^{-2}$. This allows us to switch the order of summation and integral. 
Letting $\bm{n} \mapsto k \bm{n}$ with 
$\bm{n} \in \IZ^2 + \frac{R \bm{\k} +\bm{\a}}{Rk}$
and $\k_1,\k_2$ running $\!\!\pmod{k}$ while recalling the definition of~$\Phi$ from \eqref{eq:phi_function_definition}, we then find
\begin{equation*}
\mathcal{I}^*_{R,\bm{\a},\frac{h'}{k},\frac{1}{24}} \!\lp \frac{h'}{k}+iz \rp
=
\frac{1}{2\pi i} \sum_{\k_1,\k_2 \pmod{k}} \!\!\!\!\!\!\!\!
e^{\frac{2\pi iRh'}{k} \lp \k_1 + \frac{\a_1}{R} \rp \lp \k_2 + \frac{\a_2}{R} \rp}
\mathrm{PV}\!\int_0^{\frac{1}{24}} 
e^{- 2 \pi z \left(t-\frac{1}{24}\right)} \Phi_{R,k,R \bm{\k} +\bm{\a}} (t) dt.
\end{equation*}
Inserting this into \eqref{eq:That5_mid_point_v1}, interchanging the two integrals, and recalling the definition of the Kloosterman sum \eqref{eq:Kloosterman5_definition}, we get
\begin{multline*}
\calT^{[5]}_{R,r}(n) =
\frac{1}{2\pi i}
\sum_{\a_1,\a_2 =0}^{R-1} \sum_{k=1}^N 
\sum_{\k_1,\k_2 \pmod{k}} \!\!\!\!\!\!
\frac{K_{R,r,\bm{\a},\bm{\k}}^{[5]} (n,k)}{k^{\frac{3}{2}}}
\\ \times
\mathrm{PV}\!\int_0^{\frac{1}{24}} 
\Phi_{R,k,R \bm{\k} +\bm{\a}} (t) 
\int_{1-\frac{ik_1}{k}}^{1+ \frac{ik_2}{k}}  \! z^{-\frac{3}{2}} 
e^{2 \pi \left( \frac{n_s}{k^2 z} + \lp \frac{1}{24} -t \rp z \right)} 
dz dt+ O_R\!\left(n^{\frac{1}{4}}\right)\!.
\end{multline*}
We follow the arguments of \cite{BCN} and define the integration paths shown in Figure~\ref{fig:contour_bessel_error} with $\mathcal{C} := \mathcal{C}_2 + \mathcal{C}_4 + \mathcal{C}_5 + \mathcal{C}_3 + \mathcal{C}_1$ giving a Hankel-type contour. Then we split the integral on $z$ into five pieces\footnote{
The integrals over $\mathcal{C}_3,\mathcal{C}_4$, and $\mathcal{C}_5$ yield entire functions of $t=w\in\mathbb{C}$ and those over $\mathcal{C}_1,\mathcal{C}_2$ are holomorphic for $\re (w) < \frac{1}{24}$ and continuous for $\re (w) \leq \frac{1}{24}$.
}
\begin{equation*}
\int_{1- \frac{ik_1}{k}}^{1+ \frac{ik_2}{k}}  \! z^{-\frac{3}{2}} 
e^{2 \pi \left( \frac{n_s}{k^2 z} + \lp \frac{1}{24} -t \rp z \right)} dz
=
\int_{\mathcal{C}} z^{-\frac{3}{2}}
e^{2 \pi \left( \frac{n_s}{k^2 z} + \lp \frac{1}{24} -t \rp z \right)} dz
- \sum_{j=1}^4 \int_{\mathcal{C}_j} z^{-\frac{3}{2}} 
e^{2 \pi \left( \frac{n_s}{k^2 z} + \lp \frac{1}{24} -t \rp z \right)} dz.
\end{equation*}

\begin{figure}[h!]
	\includegraphics[scale=0.3]{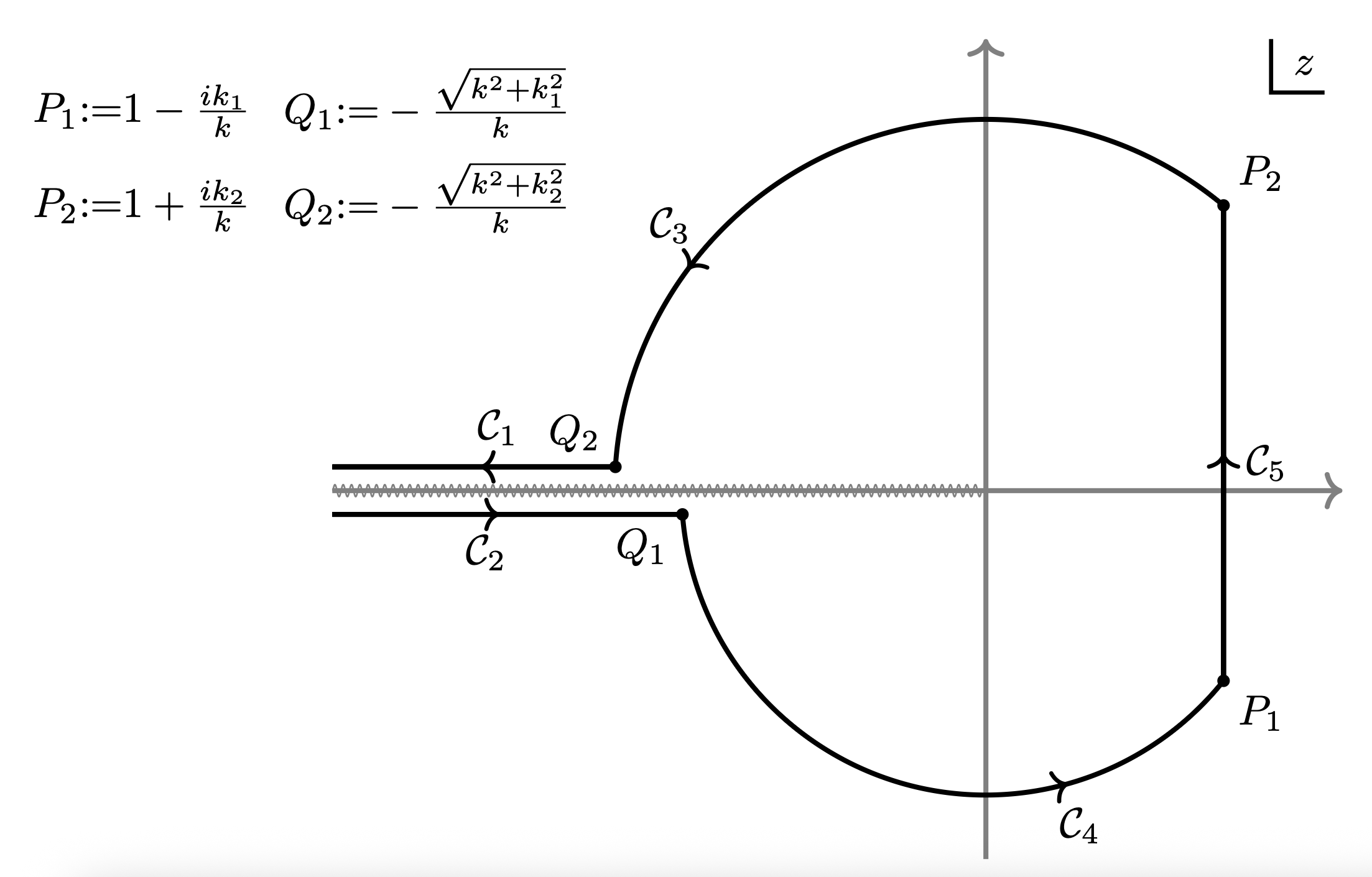}
	\vspace{-15pt}
	\caption{The integrations paths $\mathcal{C}_1,\mathcal{C}_2$, and $\mathcal{C}_5$ are straight lines and $\mathcal{C}_3$ and $\mathcal{C}_4$ are circular arcs centered at~$0$.}
\label{fig:contour_bessel_error}
\end{figure}

The integral over $\mathcal{C}$ can be evaluated in terms of $I$-Bessel functions using  \eqref{eq:I_Bessel_Hankel_integral} and its contribution to $\calT^{[5]}_{R,r}(n)$ yields the main term stated in the lemma. Our remaining task is to bound the contributions of the integrals over $\mathcal{C}_j$ with $j \in \{1,2,3,4\}$.
For this purpose, we write the kernel as 
\begin{equation}\label{eq:kernel_phi_pole_separation}
\Phi_{R,k,R \bm{\k} +\bm{\a}} (t) 
=:
\Phi^*_{R,k,R \bm{\k} +\bm{\a}} (t) 
+ 
\frac{d_{R,k,R \bm{\k} +\bm{\a}} t}{t_{R,k,R \bm{\k} +\bm{\a}} (t-t_{R,k,R \bm{\k} +\bm{\a}}) }
\end{equation}
with the second term splitting in \eqref{eq:phi_function_definition} the only possible term that may give a pole in $[0,\frac{1}{24}]$ off, so that $t_{R,k,R \bm{\k} +\bm{\a}}$ is the  unique value of $Rk^2n_1n_2$ in $[0,\frac{1}{24}]$ if such a value exists and $d_{R,k,R\bm{\k}+\bm{\a}}$ equals one if this summand exists and zero otherwise. We have (noting  $24 \nmid R$) 
\begin{equation}\label{eq:That5_delta_epsilon_R_definition_pole_location}
0 < t_{R,k,R \bm{\k} +\bm{\a}} < \frac{1}{24} \andd
0 < t_{R,k,R \bm{\k} +\bm{\a}}^{-1} \leq R .
\end{equation}
Also observe that for $t \in [0,\frac{1}{24}]$ we have
\begin{equation}\label{eq:That5_Phi_d_sum_bounds}
\sum_{\a_1,\a_2 =0}^{R-1} \sum_{\k_1,\k_2 \pmod{k}}
|\Phi^*_{R,k,R \bm{\k} +\bm{\a}} (t)| \ll_R 1
\andd
\sum_{\a_1,\a_2 =0}^{R-1} 
\sum_{\k_1,\k_2 \pmod{k}} 
d_{R,k,R \bm{\k} +\bm{\a}} \ll_R 1.
\end{equation}
To handle the pole contribution in \eqref{eq:kernel_phi_pole_separation}, we recall from \eqref{eq:leading_exp_term_E_Psi_definitions} that $\e_R = \frac{1}{2} \mathrm{dist} (\frac{1}{24}, \frac{\IZ}{R}) $. We have
\begin{equation}\label{eq:That5_delta_epsilon_R_definition_pole_interval}
0 < 2 \e_R < \frac{1}{R}
\andd
[t_{R,k,R \bm{\k} +\bm{\a}} - 2 \e_R, t_{R,k,R \bm{\k} +\bm{\a}} + 2 \e_R] 
\subset \left[0,\frac{1}{24}\right].
\end{equation}
The principal value is only needed for the pole contribution for $|t-t_{R,k,R \bm{\k} +\bm{\a}}| \leq \e_R$. We estimate such terms by removing the simple pole from the Laurent expansion (with vanishing contribution in this symmetric interval) and use the mean value bound.
Employing these facts along with the trivial bound on the Kloosterman sum, we follow \cite{BCN} and bound the contributions of the two terms of \eqref{eq:kernel_phi_pole_separation} integrated over $\mathcal{C}_j$.
We estimate those as $\ll_R n^{\frac{1}{4}}$ except for the pole contribution in~\eqref{eq:kernel_phi_pole_separation} along $\mathcal{C}_3$ and $\mathcal{C}_4$, which we bound as $\ll_R n^{\frac{1}{4}} \log (n)$, and the lemma follows.
\end{proof}

We are now ready to prove our main result.

\begin{proof}[Proof of Theorem~\ref{thm:asymptotic}]
Note that $\calT^{[5]}_{R,r}(n)$ from the splitting \eqref{splitT} is handled in Lemma~\ref{lem:That_5_asymptotic_result}. The contribution $\calT^{[4]}_{R,r}(n)$ is the standard modular-type contribution coming from the (modular) indefinite theta components of the generating function. 
Using the separation of the principal part in Lemma~\ref{lem:indef_part_nonprincipal} and using the same arguments as in Lemma~\ref{lem:That_5_asymptotic_result} (throughout using the trivial Kloosterman sum bounds), we obtain (including the case $24 \mid R$)
\begin{multline}\label{eq:That4_asymptotic_result}
\calT^{[4]}_{R,r}(n) = \frac{2\pi}{(24n_s)^{\frac{1}{4}}} 
\sum_{\substack{\a_1,\a_2 \ge1\\\alpha_1\alpha_2<\frac{R}{24}}}
\lp 1 - \frac{24\a_1\a_2}{R} \rp^{\frac{1}{4}} \sum_{k=1}^{\lfloor \sqrt{n} \rfloor} \frac{K_{R,r,\bm{\a}}^{[4]} (n,k)}{k}
I_{\frac{1}{2}} \!\lp \frac{\pi}{k} \sqrt{\frac{2}{3} \! \lp 1 - \frac{24\a_1\a_2}{R} \rp n_s} \rp 
\\[-.75em]
+ O_R\!\lp n^{\frac{1}{4}} \rp ,
\end{multline}
where the associated Kloosterman sum is defined as
\begin{equation}\label{eq:Kloosterman4_definition}
K_{R,r,\bm{\a}}^{[4]} (n,k)
:= i \!\!\!\!\!\!
\sum_{\substack{0 \leq h < k \\ \gcd(h,k) = 1}}
\!\!\!\!\!\!
\frac{\varphi_{R,r,h,k} ({\bm \a})-\varphi_{R,r,h,k} ((R,R)-{\bm \a})}{2}
e^{-\frac{\pi i h'}{12k} \lp 1 - \frac{24 \a_1\a_2}{R} \rp - \frac{2\pi i  n_s h}{k}}.
\end{equation}
Using Lemma~\ref{lem:alpha_beta_bounds} and \eqref{eq:I_Bessel_Hankel_integral}, as $n \to \infty$, we similarly obtain (again also for $24 \mid R$)
\begin{align}\label{eq:That2_asymptotic_result}
\calT^{[2]}_{R,r}(n) &= \frac{2\pi}{(24n_s)^{\frac{1}{4}}} 
\sum_{k=1}^{\lfloor \sqrt{n} \rfloor} \frac{K_{R,r}^{[2]} (n,k)}{k^2}
I_{\frac{1}{2}} \!\lp \frac{\pi}{k} \sqrt{\frac{2n_s}{3}} \rp 
+ O_R\!\lp n^{\frac{3}{4}} \log (n) \rp,
\\
\label{eq:That3_asymptotic_result}
\calT^{[3]}_{R,r}(n) &= \frac{2\pi}{(24n_s)^{\frac{3}{4}}} 
\sum_{k=1}^{\lfloor \sqrt{n} \rfloor} \frac{K_{R,r}^{[3]} (n,k)}{k}
I_{\frac{3}{2}} \!\lp \frac{\pi}{k} \sqrt{\frac{2n_s}{3}} \rp 
+ O\!\lp n^{\frac{3}{4}} \rp ,
\end{align}
where we define the Kloosterman-like sums
\begin{equation}\label{eq:Kloosterman2_3_definition}
K_{R,r}^{[2]} (n,k) \! := 
\!\!\!\!\!\! \sum_{\substack{0 \leq h < k \\ \gcd(h,k) = 1}} \!\!\!\!\!\! \!\!
\nu_{h,k} 
A_{R,r,h,k}  
\zeta_{24k}^{-h'-24n_sh}
\ \mbox{ and } \
K_{R,r}^{[3]} (n,k) \! := 
\!\!\!\!\!\! \sum_{\substack{0 \leq h < k \\ \gcd(h,k) = 1}} \!\!\!\!\!\!\!\!
\nu_{h,k} 
B_{R,r,h,k} 
\zeta_{24k}^{-h'-24n_sh}.
\end{equation}
Finally, the contribution $\calT^{[1]}_{R,r}(n)$ is handled in the same way with \eqref{eq:I_Bessel_derivative_c_infty_integral} to give (again also for~$24 \mid R$)
\begin{equation}\label{eq:That1_asymptotic_result}
\!\!\calT^{[1]}_{R,r}(n) = 
\!\sum_{k=1}^{\lfloor \sqrt{n} \rfloor} \frac{ \d_{\gcd(R,k) \mid r} K^{[1]} (n,k)}{ \mathrm{lcm} (R,k) (24n_s)^{\frac{1}{4}}} \!
 \lp \log\!\lp \frac{2\sqrt{6n_s}}{k} \rp I_{\frac{1}{2}} \!\lp\frac{\pi}{k} \sqrt{\frac{2n_s}{3}} \rp \! - \mathbb I_{\frac{1}{2}} \!\lp \frac{\pi}{k} \sqrt{\frac{2n_s}{3}} \rp \!\rp
\!+ O\!\left(n^{\frac{1}{4}}\right)\!,
\end{equation}
where
\begin{equation}\label{eq:Kloosterman1_definition}
K^{[1]} (n,k) := \sum_{\substack{0 \leq h < k \\ \gcd(h,k) = 1}} \!\!\!\!\!\!
\nu_{h,k} \zeta_{24k}^{-h'-24n_sh}.
\end{equation}
Plugging all these results into \eqref{splitT} gives the theorem.
\end{proof}

\section{Proof of Corollaries \ref{cor:leading_exponential_asymptotic} and \ref{cor:asymptoticleading}}\label{sec:leading}

Here we further investigate the leading contributions in the asymptotics of $\calT_{R,r} (n)$ by proving Corollaries~\ref{cor:leading_exponential_asymptotic} and~\ref{cor:asymptoticleading}. 
We start with $\calT^{[5]}_{R,r}(n)$.

\begin{lem}\label{lem:leading_exponential_asymptotic_That_5}
Let $n,R \in \IN$ with $24 \nmid R$, and $r \in \{1,2,\ldots, R\}$. Then we have
\begin{equation}\label{eq:leading_exponential_asymptotic_That_5}
\calT^{[5]}_{R,r} (n)  \!
= \!
\frac{1}{2 \pi R \sqrt{2 n_s}}
\int_0^{\e_R} \Psi_{R,r} (t) 
 e^{\pi \sqrt{\frac{2n_s}{3} (1-24t)}} dt
+O_R \!\lp e^{ \pi \sqrt{\frac{2}{3}(1-24\varepsilon_R)n_s}} \rp \! .
\end{equation}
\end{lem}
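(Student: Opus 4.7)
The plan is to extract the main term from the $k=1$ summand of Lemma~\ref{lem:That_5_asymptotic_result} and absorb every other contribution into the claimed exponentially smaller error. For $k\ge 2$, the elementary bound $I_{\frac{1}{2}}(x)\le \sqrt{2/(\pi x)}\,e^x$ makes the integrand $O(\sqrt{k}\,n_s^{-1/4}\,e^{\pi\sqrt{2n_s/3}/k})$ uniformly in $t$. Combined with the trivial bound $|K^{[5]}_{R,r,\bm{\a},\bm{\k}}(n,k)|\le k$, the pole separation \eqref{eq:kernel_phi_pole_separation}, the uniform estimates \eqref{eq:That5_Phi_d_sum_bounds}, and the mean-value PV trick used in the proof of Lemma~\ref{lem:That_5_asymptotic_result}, the total $k\ge 2$ contribution is $O_R(n_s^{-1/2}e^{\pi\sqrt{n_s/6}})$, dominated by $k=2$. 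Since $\e_R\le\frac{1}{48}<\frac{1}{32}$ implies $\sqrt{1-24\e_R}>\frac{1}{2}$, this is swallowed by the claimed error.

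For $k=1$, only $h=0$ is coprime to $k$, so $h'=0$ and $\bm{\k}=\bm 0$. A direct computation gives $\psi_{R,r,0,1}(\bm{\a})=\frac{1}{R}\zeta_R^{-\a_1 r}$, and $\nu_\eta(M_{0,1})=e^{-i\pi/4}$ yields $\nu_{0,1}=1$; the phase in~\eqref{eq:Kloosterman5_definition} is trivial, so $K^{[5]}_{R,r,\bm{\a},\bm 0}(n,1)=\frac{1}{R}\zeta_R^{-\a_1 r}$. Reindexing the lattice sum defining $\Phi_{R,1,\bm{\a}}$ via $\bm{m}:=R\bm{n}\in\IZ^2$ combines the $\bm{\a}$- and $\bm{n}$-sums and, using the joint symmetry $(m_1,m_2)\mapsto(-m_1,-m_2)$, yields
\[
\sum_{\a_1,\a_2=0}^{R-1}\frac{\zeta_R^{-\a_1 r}}{R}\,\Phi_{R,1,\bm{\a}}(t)
\;=\;\frac{1}{R}\sum_{\substack{m_1,m_2\in\IZ\setminus\{0\}}}\frac{R^2 t\,\zeta_R^{-m_1 r}}{m_1 m_2(Rt-m_1 m_2)}
\;=\;\frac{1}{R}\Psi_{R,r}(t).
\]

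Expanding $I_{\frac{1}{2}}(x)=(e^x-e^{-x})/\sqrt{2\pi x}$, the $e^{-x}$ part integrates to an $O_R(n_s^{-1/2})$ correction, well below the target error; the $e^x$ part, together with $(1-24t)^{1/4}$ and the overall $(24n_s)^{-1/4}$ prefactor, collapses to the kernel $\frac{1}{2\pi\sqrt{2n_s}}\,e^{\pi\sqrt{2n_s(1-24t)/3}}$, reproducing the claimed main term on the full range $[0,\frac{1}{24}]$. To restrict to $[0,\e_R]$, I use that $\e_R\le \frac{1}{4R}$ (so the integral on $[0,\e_R]$ is a regular Riemann integral, strictly below every pole of $\Psi_{R,r}$) and that on $[\e_R,\frac{1}{24}]$ the exponential factor is bounded by $e^{\pi\sqrt{2(1-24\e_R)n_s/3}}$; combined with an $O_R(\sqrt{n_s})$ bound on the PV integral of $\Psi_{R,r}$ against this exponential over the tail, this yields precisely the announced error after the $\frac{1}{\sqrt{n_s}}$ prefactor is taken into account.

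The main obstacle is this last PV bound on the tail: one must split $\Psi_{R,r}$ into a smooth part (bounded $O_R(1)$ on the complement of small pole neighborhoods) plus its local Laurent tails $\frac{c}{t-t_0}$ at each of the finitely many poles $t_0\in[\e_R,\frac{1}{24}-2\e_R]$, and then split the slowly varying exponential around each $t_0$ into its value at $t_0$ (against which the odd kernel integrates to $O_R(1)$ in the PV sense) plus a Lipschitz remainder whose derivative is bounded by $\sqrt{n_s}\,e^{\pi\sqrt{2(1-24t)n_s/3}}$ uniformly on the interval. This is the same mean-value estimate already used in the proof of Lemma~\ref{lem:That_5_asymptotic_result}, replayed on the truncated interval.
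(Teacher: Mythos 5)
Your proposal is correct and follows essentially the same route as the paper: reduce to the $k=1$ term of Lemma~\ref{lem:That_5_asymptotic_result} via trivial Kloosterman and Bessel bounds, evaluate $K^{[5]}_{R,r,\bm{\a},\bm 0}(n,1)=\frac1R\zeta_R^{-r\a_1}$ so that the $\bm{\a}$-sum collapses $\Phi_{R,1,\bm{\a}}$ to $\frac1R\Psi_{R,r}$, and truncate the integral to $[0,\e_R]$ by handling the pole on $[\e_R,\frac1{24}]$ with the symmetric-interval/mean-value PV estimate. The only (cosmetic) difference is the order of operations — you dispose of $k\ge 2$ wholesale before decomposing the integral, whereas the paper splits the PV integral into the pieces $J^{[1]},J^{[2]},J^{[3]}$ for all $k$ first — and all the quantitative checkpoints ($\e_R\le\frac1{48}<\frac1{32}$, $\e_R\le\frac1{4R}$ keeping $[0,\e_R]$ pole-free, the $\sqrt{n_s}$ loss from the mean-value bound being absorbed by the $n_s^{-1/2}$ prefactor) match the paper's.
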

\begin{proof}
By Lemma~\ref{lem:That_5_asymptotic_result} and \eqref{eq:I_Bessel_half_integral_examples}, as $n \to \infty$, we have
\begin{multline*}
\calT^{[5]}_{R,r}(n) 
= \frac{1}{\pi \sqrt{2 n_s}} 
\sum_{\a_1,\a_2 =0}^{R-1} \sum_{k=1}^{\lfloor \sqrt{n} \rfloor}
\sum_{\k_1,\k_2 \pmod{k}}
\frac{K_{R,r,\bm{\a},\bm{\k}}^{[5]} (n,k)}{\sqrt{k}} 
\\ \times
\operatorname{PV} \int_0^{\frac{1}{24}} 
\Phi_{R,k,R \bm{\k}+\bm{\a}} (t) 
\sinh\! \lp \frac{\pi}{k} \sqrt{\frac{2n_s}{3}(1-24t)} \rp dt
+ O_R\!\lp n^{\frac{1}{4}} \log (n) \rp .
\end{multline*}
Recall that $\Phi_{R,k,R \bm{\k}+\bm{\a}} (t)$ has at most one (simple) pole in $[0,\frac{1}{24}]$, denoted by $t_{R,k,R \bm{\k}+\bm{\a}}$, which is in the interior for $24 \nmid R$ (see the discussion around \eqref{eq:kernel_phi_pole_separation}).
Using \eqref{eq:kernel_phi_pole_separation} and \eqref{eq:That5_delta_epsilon_R_definition_pole_interval}, we write
\begin{multline*}
2 \operatorname{PV} \int_0^{\frac{1}{24}} 
\Phi_{R,k,R \bm{\k}+\bm{\a}} (t) 
\sinh\! \lp \frac{\pi}{k} \sqrt{\frac{2n_s}{3}(1-24t)} \rp dt
\\
=
J^{[1]}_{R,k,\bm{\k},\bm{\a}} (n) + J^{[2]}_{R,k,\bm{\k},\bm{\a}} (n)
+ \frac{d_{R,k,R \bm{\k}+\bm{\a}}}{t_{R,k,R \bm{\k}+\bm{\a}}}   
J^{[3]}_{R,k,\bm{\k},\bm{\a}} (n),
\end{multline*}
where we let $f_{n,k} (t) := t \sinh(\frac{\pi}{k} \sqrt{\frac{2n_s}{3}(1-24t)})$ and
\begin{align}
J^{[1]}_{R,k,\bm{\k},\bm{\a}} (n)
&:= 
\int_0^{\e_R} \Phi_{R,k,R \bm{\k} +\bm{\a}} (t) 
e^{\frac{\pi}{k} \sqrt{\frac{2n_s}{3}(1-24t)} } dt,
\notag \\
J^{[2]}_{R,k,\bm{\k},\bm{\a}} (n)
&:= 
\int_{\e_R}^{\frac{1}{24}} \Phi^*_{R,k,R \bm{\k} +\bm{\a}} (t)   
e^{\frac{\pi}{k} \sqrt{\frac{2n_s}{3}(1-24t)} } dt
-
\int_{0}^{\frac{1}{24}} \Phi^*_{R,k,R \bm{\k} +\bm{\a}} (t) 
e^{-\frac{\pi}{k} \sqrt{\frac{2n_s}{3}(1-24t)} } dt,
\notag \\
J^{[3]}_{R,k,\bm{\k},\bm{\a}} (n)
&:= 
2 \int_{t_{R,k,R \bm{\k} +\bm{\a}}-\e_R}^{t_{R,k,R \bm{\k} +\bm{\a}}+\e_R} 
\frac{f_{n,k} (t) - f_{n,k} (t_{R,k,R \bm{\k} +\bm{\a}})}{t-t_{R,k,R \bm{\k} +\bm{\a}}} dt
\notag \\ & \qquad
+
\lp \int_{\e_R}^{t_{R,k,R \bm{\k} +\bm{\a}}-\e_R}
+ \int_{t_{R,k,R \bm{\k} +\bm{\a}}+\e_R}^{\frac{1}{24}} \rp
\frac{t e^{\frac{\pi}{k} \sqrt{\frac{2n_s}{3}(1-24t)}}}{t-t_{R,k,R \bm{\k} +\bm{\a}}} dt
\notag \\ & \qquad \qquad 
-
\lp \int_{0}^{t_{R,k,R \bm{\k} +\bm{\a}}-\e_R}
+ \int_{t_{R,k,R \bm{\k} +\bm{\a}}+\e_R}^{\frac{1}{24}} \rp
\frac{t e^{-\frac{\pi}{k} \sqrt{\frac{2n_s}{3}(1-24t)}}}{t-t_{R,k,R \bm{\k} +\bm{\a}}} dt .\nonumber
\end{align}
Here, $J^{[3]}_{R,k,\bm{\k},\bm{\a}} (n)$ is defined assuming that $\Phi_{R,k,R \bm{\k} +\bm{\a}} (t)$ has a pole inside $[0,\frac{1}{24}]$, otherwise we take this term to be zero. Noting that $t \leq \frac{1}{24}$ and $24 \e_R \leq 1-24t \leq 1-24\e_R$ for $t \in [{t_{R,k,R \bm{\k} +\bm{\a}}-\e_R},\linebreak {t_{R,k,R \bm{\k} +\bm{\a}}+\e_R}]$ by \eqref{eq:That5_delta_epsilon_R_definition_pole_interval} and employing the mean value bound, we have
\begin{equation*}
\left| \frac{f_{n,k} (t) - f_{n,k} (t_{R,k,R \bm{\k} +\bm{\a}})}{t-t_{R,k,R \bm{\k} +\bm{\a}}} \right|
\leq 
\lp 1 + \frac{\pi}{12k} \sqrt{\frac{n_s}{\e_R}}  \rp e^{\frac{\pi}{k} \sqrt{\frac{2}{3}(1-24\varepsilon_R)n_s}}.
\end{equation*}
This then implies that 
\begin{equation*}
J^{[3]}_{R,k,\bm{\k},\bm{\a}} (n) \ll_R \sqrt{n_s} e^{\frac{\pi}{k} \sqrt{\frac{2}{3}(1-24\varepsilon_R)n_s}} .
\end{equation*}
With \eqref{eq:That5_delta_epsilon_R_definition_pole_location}, we then bound the contribution of $J^{[3]}_{R,k,\bm{\k},\bm{\a}} (n)$ to $\calT^{[5]}_{R,r}(n)$ as $\ll_R e^{\pi \sqrt{\frac{2}{3} (1- 24\e_R)n_s}}$, which is accounted for by the error term in the lemma. This is also true for the contribution of $J^{[2]}_{R,k,\bm{\k},\bm{\a}} (n)$ by~\eqref{eq:That5_Phi_d_sum_bounds}.
The same arguments can also be used to bound the contribution of $J^{[1]}_{R,k,\bm{\k},\bm{\a}} (n)$ to $\calT^{[5]}_{R,r}(n)$ for $k \geq 2$ by
$\ll_R e^{\pi \sqrt{\frac{n_s}{6}}}$, which is again accounted for by the error term in the lemma.
This leaves the contribution of $J^{[1]}_{R,1,\bm{\k},\bm{\a}} (n)$, so that
\begin{equation*}
\calT^{[5]}_{R,r}(n) = \frac{1}{2\pi \sqrt{2n_s}} 
\sum_{\a_1,\a_2 =0}^{R-1} 
K_{R,r,\bm{\a},\bm{0}}^{[5]} (n,1)
\int_0^{\e_R} \Phi_{R,1,\bm{\a}} (t) e^{\pi \sqrt{\frac{2n_s}{3}(1-24t)} } dt
+O_R \!\lp e^{\pi \sqrt{\frac{2}{3} (1- 24\e_R)n_s}} \rp  \! .
\end{equation*}
The lemma follows once we note $K_{R,r,\bm{\a},\bm{0}}^{[5]} (n,1) = \frac{1}{R} \zeta_R^{-r\a_1}$ by \eqref{eq:psi_definition}, \eqref{eq:nu_varphi_gamma_definition}, \eqref{eq:Kloosterman5_definition} and the fact that
$\sum_{\a_1,\a_2 =0}^{R-1} 
\zeta_R^{-r\a_1} \Phi_{R,1,\bm{\a}} (t)=\Psi_{R,r} (t)$
by the definitions of $\Psi$ and $\Phi$ in~\eqref{eq:leading_exp_term_E_Psi_definitions}, \eqref{eq:phi_function_definition},  respectively.
\end{proof}
We are now ready to prove Corollary~\ref{cor:leading_exponential_asymptotic}.

\begin{proof}[Proof of Corollary~\ref{cor:leading_exponential_asymptotic}]
With $\calT^{[5]}_{R,r}(n)$ examined in Lemma~\ref{lem:leading_exponential_asymptotic_That_5}, we study the remaining terms in~\eqref{splitT}. We start with the asymptotics in \eqref{eq:That1_asymptotic_result} for~$\calT^{[1]}_{R,r}(n)$, 
\eqref{eq:That2_asymptotic_result} for~$\calT^{[2]}_{R,r}(n)$, and \eqref{eq:That3_asymptotic_result} for~$\calT^{[3]}_{R,r}(n)$.
Using the explicit characterization of the Bessel functions in \eqref{eq:I_Bessel_half_integral_examples}, \eqref{eq:I_Bessel_order_der_1_2_example}, and \eqref{eq:exponential_integral_term_asymptotics},
and the definitions of Kloosterman sums \eqref{eq:Kloosterman2_3_definition} and \eqref{eq:Kloosterman1_definition} (also see Lemma~\ref{lem:alpha_beta_bounds}), we find
\begin{multline}\label{eq:leading_exponential_asymptotic_That_1_2_3}
\sum_{j=1}^3 \calT^{[j]}_{R,r} (n)  
= 
\frac{e^{\pi \sqrt{\frac{2n_s}{3}}}}{4 \pi R \sqrt{2 n_s}}
\Bigg(\!
\log (n_s) - \log \!\lp \frac{\pi^2}{6} \rp 
- 2 \psi\!\lp \frac{r}{R} \rp - 2 \log (R)+ \frac{\pi (2r-R)}{2\sqrt{6n_s}}
+ \frac{R-2r}{4 n_s}
\\ 
+ 2 \CE\!\lp 2 \pi \sqrt{\frac{2n_s}{3} } \rp\!\!
\Bigg)
+O_R\!\lp e^{\pi \sqrt{\frac{n_s}{6}}} \rp \! .
\end{multline}
Similarly, using \eqref{eq:That4_asymptotic_result}, \eqref{eq:I_Bessel_half_integral_examples}, and \eqref{eq:Kloosterman4_definition} while noting $K_{R,r,\bm{\a}}^{[4]} (n,1) = \frac{1}{R} \sin ( \frac{2 \pi r\a_1}{R} )$, we find that
\begin{equation}\label{eq:leading_exponential_asymptotic_That_4}
\calT^{[4]}_{R,r} (n) = 
\frac{1}{R \sqrt{2n_s}}  
\sum_{\substack{\a_1,\a_2 \ge1 \\ \a_1\a_2 < \frac{R}{32}}}
\sin \!\lp \frac{2 \pi r \a_1}{R} \rp
e^{\pi \sqrt{\frac{2}{3} \! \lp 1 - \frac{24\a_1\a_2}{R} \rp n_s}} 
+O_R\!\lp e^{\pi \sqrt{\frac{n_s}{6}}} \rp \!
\ll_R
e^{\pi \sqrt{\frac{2}{3} (1-24 \e_R) n_s}} .
\end{equation}
Combining this result with Lemma~\ref{lem:leading_exponential_asymptotic_That_5} and~\eqref{eq:leading_exponential_asymptotic_That_1_2_3} yields the corollary.
\end{proof}

Our final goal is to determine the leading exponential term in $\calT_{R,r}(n)$ more explicitely.

\begin{proof}[Proof of Corollary~\ref{cor:asymptoticleading}]
We start with the integral in Corollary~\ref{cor:leading_exponential_asymptotic} and make the change of variables $u := \frac{1-\sqrt{1-24t}}{12}$ as in the proof of Proposition 7.2 in \cite{BCN} to write
\begin{equation*}
\int_0^{\e_R} \Psi_{R,r} (t) 
e^{- \pi \sqrt{\frac{2n_s}{3}} \left(1-\sqrt{1-24t}\right)} dt
=
\int_0^{\d_R} f_{R,r} (u)
e^{-4 \pi \sqrt{6 n_s} u} du,
\end{equation*}
where
\begin{equation*}
\d_R := \frac{1-\sqrt{1-24\e_R}}{12} 
\andd
f_{R,r} (u) := (1-12u) \Psi_{R,r} (u(1-6u)) .
\end{equation*}
Noting that $f_{R,r}$ is smooth on $[0,\d_R]$ and that $f_{R,r} (0) = 0$, we use its Taylor expansion around the point $u=0$ to find (for $L \in \IN_{\geq 2}$)
\begin{equation}\label{eq:leading_exp_part_integral_part1}
\int_0^{\e_R} \Psi_{R,r} (t) 
e^{- \pi \sqrt{\frac{2n_s}{3}} \left(1-\sqrt{1-24t}\right)} dt
=
\sum_{\ell=2}^{L} \frac{f_{R,r}^{(\ell-1)} (0)}{\left(4 \pi \sqrt{6n_s}\right)^\ell} 
+ O_{L,R} \!\lp n^{-\frac{L+1}{2}} \rp .
\end{equation}
By induction one can show that, for $\ell\in\IN$, we have
\begin{equation}\label{eq:leading_exp_part_integral_part2}
f^{(\ell-1)}_{R,r} (u)
= \sum_{j=\left\lceil \frac{\ell}{2} \right\rceil}^{\ell} 
\frac{(-6)^{\ell-j} \ell!}{(\ell-j)! (2j-\ell)!}  
(1-12u)^{2j-\ell}  
\Psi_{R,r}^{(j-1)} (u(1-6u)).
\end{equation}
Using \eqref{eq:leading_exp_term_E_Psi_definitions} and expanding the denominator, we have, for $|t| < \frac{1}{R}$,
\begin{equation*}
\Psi_{R,r} (t) = - \sum_{j\geq 0} R^{j+2} t^{j+1} 
\sum_{n_1,n_2\in\Z\setminus\{0\}} 
\frac{\zeta_R^{rn_1}}{(n_1 n_2)^{j+2}}.
\end{equation*}
By \eqref{eq:Bernoulli_polynomial_identity}, we then evaluate
\begin{equation*}
\Psi_{R,r}^{(2j-1)} (0) = - \frac{\left(4 \pi^2 R\right)^{2j}}{2j (2j)!} 
B_{2j}  B_{2j} \!\lp \frac{r}{R} \rp
\mbox{ for } j \in \IN
\andd
\Psi_{R,r}^{(2j)} (0) = 0 \mbox{ for } j \in \IN_0 .
\end{equation*}
Using this in \eqref{eq:leading_exp_part_integral_part1} and \eqref{eq:leading_exp_part_integral_part2} then yields (for $L \in \IN_{\geq 2}$)
\begin{equation*}
\int_0^{\e_R} \Psi_{R,r} (t) 
e^{- \pi \sqrt{\frac{2n_s}{3}} \left(1-\sqrt{1-24t}\right)} dt
=
\sum_{\ell=2}^{L} \frac{c_{R,r,\ell}}{n_s^{\frac{\ell}{2}}} 
+ O_{L,R} \!\lp n^{-\frac{L+1}{2}} \rp ,
\end{equation*}
where
\begin{equation*}
c_{R,r,\ell} :=(-1)^{\ell+1}  \ell!
\lp \frac{\sqrt{3}}{2\sqrt{2}\pi} \rp^{\ell} 
\sum_{j = \left\lceil \frac{\ell}{4} \right\rceil}^{\left\lfloor \frac{\ell}{2} \right\rfloor}
\frac{1}{2j} \! \lp \frac{2\pi^2 R}{3} \rp^{2j} \!\!
\frac{B_{2j} B_{2j}\!\lp \frac{r}{R} \rp}{(\ell-2j)! (4j-\ell)! (2j)!} .
\end{equation*}
The corollary then follows using Corollary~\ref{cor:leading_exponential_asymptotic} and \eqref{eq:exponential_integral_term_asymptotics}.
\end{proof}
We are now ready to prove Corollary~\ref{cor:leading_exponential_asymptotic_antisymmetric}.

\begin{proof}[Proof of Corollary~\ref{cor:leading_exponential_asymptotic_antisymmetric}]
By \eqref{eq:psi_definition} and \eqref{eq:nu_varphi_gamma_definition} we have $\varphi_{R,r,h,k} (\bm{\a})  = \varphi_{R,R-r,h,k} (-\bm{\a})$ and by \eqref{eq:calI_definition} we have $\mathcal{I}_{R,\bm{\a},-\frac{d}{c}} (\t) = \mathcal{I}_{R,-\bm{\a},-\frac{d}{c}} (\t)$, where both are periodic in $\a_1,\a_2$ with period $R$. Therefore, by~\eqref{eq:That5_definition}, we have $\calT^{[5]}_{R,r}(n) = \calT^{[5]}_{R,R-r}(n)$, which hence does not contribute to $\mathcal{T}_{R,r}(n)-\mathcal{T}_{R,R-r}(n)$. The corollary follows by \eqref{eq:leading_exponential_asymptotic_That_1_2_3} and \eqref{eq:leading_exponential_asymptotic_That_4} (for which we note that $24 \nmid R$ is not required).
\end{proof}

\end{document}